\newtheorem{thm}{Theorem}[section]
\newtheorem{prop}[thm]{Proposition}
\newtheorem{lem}[thm]{Lemma}
\newtheorem{cor}[thm]{Corollary}
\theoremstyle{remark}
\newtheorem{rem}[thm]{Remark}
\newcommand{\ZZ}{\mathbb{Z}}
\newcommand{\RR}{\mathbb{R}}
\newcommand{\0}{\mathbf{0}}
\begin{document}
\title{On the Existence of Extremal Type~II
$\ZZ_{2k}$-Codes\footnote{This work was supported by JST PRESTO program.}
%(Version 5.6)
}

% Version 0:   2012/04/03 (H)
% Version 0.1: 2012/04/06 (H)
% Version 1:   2012/04/13 (H)
% Version 2:   2012/04/17 (H)
% Version 2.1: 2012/04/18 (H)
% Version 2.2: 2012/04/21 (M)
% Version 3:   2012/04/24 (H)
% Version 3.1: 2012/04/25 (M)
% Version 3.2: 2012/04/28 (H)
% Version 3.3: 2012/05/03 (M)
% Version 3.4: 2012/05/03 (M)
% Version 4:   2012/05/05 (H)
% Version 4.1: 2012/05/08 (H)
% Version 4.2: 2012/05/08 (H)
% Version 4.3: 2012/05/08 (H)
% Version 4.4: 2012/05/12 (M)
% Version 4.5: 2012/05/15 (H)
% Version 4.6: 2012/05/16 (H)
% Version 5:   2012/05/23 (H)
% Version 5.1: 2012/05/23 (M)
% Version 5.2: 2012/05/25 (M)
% Version 5.3: 2012/05/26 (H)
% Version 5.41: 2012/05/29 (M)
% Version 5.5: 2012/05/30 (H)
% Version 5.6: 2012/05/31 (H)

\author{
Masaaki Harada\thanks{
Department of Mathematical Sciences,
Yamagata University,
Yamagata 990--8560, Japan. 
email: mharada@sci.kj.yamagata-u.ac.jp}
and 
Tsuyoshi Miezaki\thanks{Department of Mathematics, 
Oita National College of Technology, 
1666 Oaza-Maki, Oita, 870--0152, Japan. email: miezaki@oita-ct.ac.jp
}
}

\date{}

\maketitle

\begin{center}
{\sl In memory of Boris Venkov}
\end{center}

\begin{abstract}
For lengths $8,16$ and $24$, it is known that
there is an extremal Type~II $\ZZ_{2k}$-code
for every positive integer $k$.
In this paper, 
we show that there is an extremal Type~II $\ZZ_{2k}$-code of 
lengths $32,40,48,56$ and $64$ for every positive integer $k$.
For length $72$,  it is also shown that
there is an extremal Type~II $\ZZ_{4k}$-code 
for every positive integer $k$ with $k \ge 2$.
\end{abstract}

{\small
\noindent
{\bfseries Key Words:}
self-dual code, unimodular lattice, $k$-frame, theta series 

\noindent
2010 {\it Mathematics Subject Classification}. 
Primary 94B05; Secondary 11H71; Tertiary 11F11.\\ \quad
}

%%%%%%%%%%%%%%%%%%%%%%%%%%%%%%%%%%
\section{Introduction}\label{Sec:1}

As described in~\cite{RS-Handbook},
self-dual codes are an important class of linear codes for both
theoretical and practical reasons.
It is a fundamental problem to classify self-dual codes
of modest lengths 
and determine the largest minimum weight among self-dual codes
of that length.
%%
% Binary doubly even self-dual codes have been widely studied
% (see~\cite{SPLAG, RS-Handbook}).
% For these codes,
% much work has been done concerning this fundamental problem.
For binary doubly even self-dual codes,
much work has been done concerning this fundamental problem
(see~\cite{SPLAG, Huffman05, RS-Handbook}).
Binary doubly even self-dual codes are often called Type~II codes.
For general $k$,
Type~II $\ZZ_{2k}$-codes were defined in~\cite{BDHO} 
as self-dual codes with the property that all
Euclidean weights are divisible by $4k$,
where $\ZZ_{m}$ is the ring 
of integers modulo $m$.
By Construction A, 
Type~II $\ZZ_{2k}$-codes give even unimodular lattices.
It follows that a Type~II $\ZZ_{2k}$-code of length $n$ exists 
if and only if $n$ is divisible by eight (see~\cite{BDHO}).

Let $C$ be a Type~II $\ZZ_{2k}$-code of length $n \equiv 0\pmod 8$.
If $n \le 136$ 
then we have the following bound on the minimum Euclidean weight
$d_E(C)$ of $C$:
\begin{equation}\label{Eq:B}
d_E(C) \le 4k \left\lfloor \frac{n}{24} \right\rfloor +4k,
\end{equation}
for every positive integer $k$~\cite{BDHO, Z4-BSBM, HM,MS73}.
We say that a Type~II $\ZZ_{2k}$-code meeting the bound (\ref{Eq:B})
with equality is {\em extremal} for length $n \le 136$.
% Of course, every Type~II $\ZZ_{2k}$-code is extremal
% for lengths $8$ and $16$.

The existence of an extremal Type~II $\ZZ_{2k}$-code of 
length $n \le 64$ ($n \equiv 0 \pmod 8$)
is known for $k=1,2,\ldots,6$ (see~\cite[Table 1]{HM}).
The existence of a binary extremal Type~II code of length $72$
is a long-standing open question (see~\cite{Huffman05, RS-Handbook}).
Moreover, 
the existence of an extremal Type~II $\ZZ_{2k}$-code 
of length $72$ is not known for any positive integer $k$.
It is well known that
the binary extended Golay code is a binary
extremal Type~II code of length $24$.
For every positive integer $k \ge 2$,
it was shown in~\cite{Chapman, GH01} that
the Leech lattice contains a $2k$-frame\footnote{Recently, 
the second author~\cite{Miezaki}
has shown that the odd Leech lattice contains a $k$-frame for 
every positive integer $k$ with $k \ge 3$ by the approach
which is similar to that in~\cite{Chapman, GH01}.}.
Hence, there is an extremal Type~II $\ZZ_{2k}$-code of
length $24$  for every positive integer $k$. 
%% $k \ge 2$.
% The existence of an extremal Type~II $\ZZ_{2k}$-code of
% length $24$ is established for every positive integer 
% $k \ge 2$ \cite{Chapman, GH01}.
This motivates our investigation of the existence of
an extremal Type~II $\ZZ_{2k}$-code of larger lengths.
The main aim of this paper is to establish the existence of
an extremal Type~II $\ZZ_{2k}$-code of lengths $32,40,48,56$ and $64$
for every positive integer $k$.
%%%
% This result yields that
% all coefficients of the theta series of
% an extremal even unimodular lattice are positive
% for  dimensions $n=32,40,48,56$ and $64$.
This result yields that if $L$ is an extremal even unimodular lattice 
with theta series $\theta_L(q) = \sum_{m=0}^{\infty}A_{2m}q^{2m}$
for  dimensions $n=32,40,48,56$ and $64$
then $A_{2m} > 0$ for $m \ge \left\lfloor \frac{n}{24} 
\right\rfloor+1$.
%%%%%
For length $72$,  it is also shown that
there is an extremal Type~II $\ZZ_{4k}$-code 
for every positive integer $k$ with $k \ge 2$.
It is known that a unimodular lattice $L$
contains a $k$-frame if and only if there exists 
a self-dual $\ZZ_{k}$-code $C$ 
such that $L$ is isomorphic to 
the lattice obtained from $C$ by Construction A.
The powerful tool in the study of this paper
is to consider the existence
of $2k$-frames in some extremal even unimodular lattices.
% The powerful tools in the study are to consider the existence
% of $2k$-frames in some extremal even unimodular lattices.
%%%%%%

This paper is organized as follows. In Section~\ref{sec:Pre}, we
give definitions and some basic properties of self-dual codes,
weighing matrices and unimodular lattices used in this paper.
%% Section 3
In Section~\ref{sec:frame}, we provide methods for
constructing $m$-frames in unimodular lattices, which
are constructed from some self-dual $\ZZ_k$-codes by Construction A.
Using the theory of modular forms (see~\cite{Miyake} for details),
we also derive number theoretical results to give
infinite families of $m$-frames based on the above methods
(Theorems~\ref{thm:prime23} and~\ref{thm:prime29}).
%% Section 4
In Section~\ref{sec:3240}, by the approach,
which is similar to that used in~\cite{Chapman, GH01}, 
we show the existence of
an extremal Type~II $\ZZ_{2k}$-code of lengths $32$ and $40$
for every positive integer $k$
(Theorem~\ref{thm:3240}).
This is done by finding $2k$-frames in some extremal even unimodular
lattices constructed from extremal Type~II $\ZZ_4$-codes,
along with constructing extremal Type~II $\ZZ_{22}$-codes.
%% Section 5
%In Sections~\ref{sec:48}, \ref{sec:56} and \ref{sec:64},
%for every positive integer $k$, we show the existence of
%extremal Type~II $\ZZ_{2k}$-codes of lengths $48$,
%$56$ and $64$, respectively
%(Theorems \ref{thm:48}, \ref{thm:56} and \ref{thm:64}).
In Sections~\ref{sec:48}, \ref{sec:56} and \ref{sec:64},
for every positive integer $k$, we show the existence of
an extremal Type~II $\ZZ_{2k}$-code of lengths $48,56$ and $64$, 
respectively (Theorems~\ref{thm:48}, \ref{thm:56} and \ref{thm:64}).
Our approach in Sections~\ref{sec:48}, \ref{sec:56} and \ref{sec:64}
%% is similar to that for length $24$, but
is similar to that in Section~\ref{sec:3240}, but
we have to find self-dual $\ZZ_k$-codes 
$(k=3,5)$, instead of extremal Type~II $\ZZ_4$-codes,
since extremal even unimodular lattices have minimum norm $6$.
Hence, 
some number theoretical results established in Section~\ref{sec:frame}
are required for these lengths.
The first example of an extremal Type~II $\ZZ_{2k}$-code 
of length $n$ is also explicitly found for 
$(n,2k)=(48,14),(48,46),(56,14),(56,34), (56,46), (64,14)$ and $(64,46)$. 
Some of examples are used to complete the proofs of
Theorems~\ref{thm:48} and \ref{thm:56}.
%% Section 8
%% The approach is different to the above lengths, but
%Finally, 
In Section~\ref{sec:72}, it is shown that
there is an extremal Type~II $\ZZ_{4k}$-code of length
$72$ for every positive integer $k$ with $k \ge 2$
(Theorem~\ref{thm:72}),
by finding a $4k$-frame in 
the extremal even unimodular lattice in dimension $72$, 
which has been recently found by Nebe~\cite{Nebe72}.
Finally, in Section~\ref{sec:T}, we discuss the
positivity of coefficients of theta series of extremal even unimodular
lattices.  %% in dimensions $32,40,48,56,64$ and $72$.
%% In particular, as a consequence of the 
%% main theorems of the present paper, we show that 
%% all coefficients of the theta series of 
%% an extremal even unimodular lattice in dimensions 
%% $n=32,40,48,56$ and $64$ are positive. 

Most of computer calculations in this paper were
done by {\sc Magma}~\cite{Magma}.

%%%%%%%%%%%%%%%%%%%%%%%%%%%%%%%%%%
\section{Preliminaries}\label{sec:Pre}

In this section, we give definitions and some basic properties 
of self-dual codes, weighing matrices and unimodular lattices 
used in this paper.
%%%%
\subsection{Self-dual codes}
Let $\ZZ_{k}$ be the ring 
of integers modulo $k$, where $k$ 
is a positive integer. 
In this paper, we always assume that $k\geq 2$ and 
we take the set $\ZZ_{k}$ to be 
$\{0,1,\ldots,k-1\}$.
%%using whichever form is more convenient.  
A $\ZZ_{k}$-code $C$ of length $n$
(or a code $C$ of length $n$ over $\ZZ_{k}$)
is a $\ZZ_{k}$-submodule of $\ZZ_{k}^n$.
A $\ZZ_2$-code and a $\ZZ_3$-code
are called binary and ternary, respectively.
%% Two $\ZZ_{2k}$-codes $C$ and $C'$ are {\em equivalent} 
%% if there exists a monomial $(\pm 1, 0)$-matrix $P$ with 
%% $C' = C \cdot P = \{ x P\:|\: x \in C\}$.  
%% The automorphism group $\Aut(C)$ of $C$ is the group of all
%% monomial $(\pm 1, 0)$-matrices $P$ with
%% $C = C \cdot P$.
A $\ZZ_{k}$-code $C$ is {\em self-dual} if $C=C^\perp$, where
the dual code $C^\perp$ of $C$ is defined as 
$C^\perp = \{ x \in \ZZ_{k}^n \mid x \cdot y = 0$ for all $y \in C\}$
under the standard inner product $x \cdot y$. 
% The Euclidean weight of a codeword 
% $x=(x_1,x_2,\ldots,x_n)$ is 
% $\sum_{i=1}^n \min\{x_i^2,(k-x_i)^2\}$.
The Euclidean weight of a codeword $x=(x_1,\ldots,x_n)$ of $C$ is
$\sum_{\alpha=1}^{\lfloor k/2 \rfloor}n_\alpha(x) \alpha^2$, 
where $n_{\alpha}(x)$ denotes
the number of components $i$ with $x_i \equiv \pm \alpha \pmod k$ 
$(\alpha=1,2,\ldots,\lfloor k/2 \rfloor)$.
The minimum Euclidean weight $d_E(C)$ of $C$ is the smallest Euclidean
weight among all nonzero codewords of $C$.

Binary doubly even self-dual codes 
have been widely studied
(see~\cite{SPLAG,Huffman05, RS-Handbook}).
Binary doubly even self-dual codes are often called Type~II codes.
For general $k$,
{\em Type~II} $\ZZ_{2k}$-codes were defined in~\cite{BDHO} 
as self-dual codes with the property that all
Euclidean weights are divisible by $4k$
(see~\cite{Z4-BSBM} and~\cite{Z4-HSG} for $\ZZ_4$-codes).
Type~II $\ZZ_{2k}$-codes are an important class of
self-dual $\ZZ_{2k}$-codes, since 
Type~II $\ZZ_{2k}$-codes give even unimodular lattices 
by Construction A.
It is known that a Type~II $\ZZ_{2k}$-code of length $n$ exists 
if and only if $n$ is divisible by eight~\cite{BDHO}.

Let $C$ be a Type~II $\ZZ_{2k}$-code of length $n \equiv 0 \pmod 8$.
The bound (\ref{Eq:B}) is established
for $k=1$~\cite{MS73},
for $k=2$~\cite{Z4-BSBM},
for $k=3,4,5,6$~\cite{HM}. 
For $k \ge 3$, the bound (\ref{Eq:B}) is known under the
assumption that $\lfloor n/24 \rfloor\leq k-2$~\cite{BDHO}.
Therefore, if $n \le 136$ then we have  (\ref{Eq:B}) for 
every positive integer $k$.
We say that a Type~II $\ZZ_{2k}$-code of length $n \le 136$
meeting the bound (\ref{Eq:B})
with equality is {\em extremal}.
% Of course, every Type~II $\ZZ_{2k}$-code is extremal
% for lengths $8$ and $16$.
The existence of an extremal Type~II $\ZZ_{2k}$-code of 
length $n \le 64$ ($n \equiv 0 \pmod 8$)
is known for $k=1,2,\ldots,6$ (see~\cite[Table 1]{HM}).
For lengths $8,16$ and $24$, it is known that
there is an extremal Type~II $\ZZ_{2k}$-code
for every positive integer $k$.

%%%%
\subsection{Weighing matrices and negacirculant matrices}
A {\em weighing matrix} $W$ of order $n$ and weight $k$ is an
$n \times n$ $(1,-1,0)$-matrix $W$ such that
$W W^T=kI$, where $I$ is
the identity matrix and $W^T$ denotes the transpose of $W$
(see~\cite{GS} for details of weighing matrices).
A weighing matrix $W$ is called {\em skew-symmetric} if $W^T=-W$.

In this paper, 
to construct self-dual $\ZZ_k$-codes, we use
weighing matrices $M$ of order $n$ and 
weight $\equiv -1 \pmod k$ and 
$n \times n$ 
$(0,\pm 1,\pm2,\ldots,\pm \lfloor k/2  \rfloor)$-matrices $M$
with $M M^T=mI$ and $m \equiv -1 \pmod k$ as follows.
%A {\em weighing matrix} $W$ of order $n$ and weight $k$ is an
%$n \times n$ $(1,-1,0)$-matrix $W$ such that
%$W W^T=kI$, where $I$ is
%the identity matrix and $W^T$ denotes the transpose of $W$.
%A weighing matrix $W$ is called {\em skew-symmetric} if $W^T=-W$.
%See \cite{GS} for details of weighing matrices.
%
%In this paper, weighing matrices and $n \times n$ 
%$(0,\pm 1,\pm2,\ldots,\pm \lfloor k/2  \rfloor)$-matrices $M$
%with $M M^T=mI$ and $m \equiv -1 \pmod k$
%are used to construct self-dual $\ZZ_k$-codes as follows.
% These matrices $M$ can be regarded as a generalization of weighing matrices, 
% but it is a special case of orthogonal designs.
Let $C_k(M)$ be  the $\ZZ_k$-code of length $2n$
with generator matrix
$\left(\begin{array}{cc}
I & M
\end{array}\right)$, where 
the entries of the matrix are regarded as elements of $\ZZ_k$.
Then it is easy to see that $C_k(M)$ is self-dual.

An  $n \times n$ matrix $M$ is circulant and 
{\em negacirculant} if
$M$ has the following form:
% \[
% \left( \begin{array}{ccccc}
% r_0&r_1&r_2& \cdots &r_{n-1} \\
% -r_{n-1}&r_0&r_1& \cdots &r_{n-2} \\
% -r_{n-2}&-r_{n-1}&r_0& \cdots &r_{n-3} \\
% \vdots &\vdots & \vdots && \vdots\\
% -r_1&-r_2&-r_3& \cdots&r_0
% \end{array}
% \right).
% \]
\[
\left( \begin{array}{ccccc}
r_0     &r_1     & \cdots &r_{n-1} \\
r_{n-1}&r_0     & \cdots &r_{n-2} \\
r_{n-2}&r_{n-1}& \cdots &r_{n-3} \\
\vdots  & \vdots && \vdots\\
r_1    &r_2    & \cdots&r_0
\end{array}
\right) \text{ and }
\left( \begin{array}{ccccc}
r_0     &r_1     & \cdots &r_{n-1} \\
-r_{n-1}&r_0     & \cdots &r_{n-2} \\
-r_{n-2}&-r_{n-1}& \cdots &r_{n-3} \\
\vdots  & \vdots && \vdots\\
-r_1    &-r_2    & \cdots&r_0
\end{array}
\right),
\]
respectively.
Most of matrices including weighing matrices constructed in this paper 
are based on negacirculant matrices.
For example, 
in order to construct self-dual $\ZZ_k$-codes of length $4n$,
we often consider a generator 
matrix  of the form:
\begin{equation} \label{eq:GM}
\left(
\begin{array}{ccc@{}c}
\quad & {\Large I} & \quad &
\begin{array}{cc}
A & B \\
-B^T & A^T
\end{array}
\end{array}
\right),
\end{equation}
where $A$ and $B$ are $n \times n$ negacirculant matrices.
It is easy to see that the code is self-dual if
$AA^T+BB^T=-I$.

%%%%
\subsection{Unimodular lattices}\label{sec:2U}
A (Euclidean) lattice $L \subset \RR^n$
in dimension $n$
is {\em unimodular} if
$L = L^{*}$, where
the dual lattice $L^{*}$ of $L$ is defined as
$\{ x \in {\RR}^n \mid (x,y) \in \ZZ \text{ for all }
y \in L\}$ under the standard inner product $(x,y)$.
Two lattices $L$ and $L'$ are {\em isomorphic}, denoted $L \cong L'$,
if there exists an orthogonal matrix $A$ with
$L' = L \cdot A$.
The norm of a vector $x$ is defined as $(x, x)$.
The minimum norm $\min(L)$ of a unimodular
lattice $L$ is the smallest norm among all nonzero vectors of $L$.
% The theta series $\theta_{L}(q)$ of $L$ is the formal power
% series $\theta_{L}(q) = \sum_{x \in L} q^{(x,x)}$.
A unimodular lattice with even norms is said to be {\em even}, 
and that containing a vector of odd norm is said to be odd.
An odd  unimodular lattice exists for every dimension.
Indeed, $\ZZ^n$ is an odd  unimodular lattice in dimension $n$.

%%% extremal of even unimodular
Let $L$ be an even unimodular lattice in dimension $n$.
Denote by $\theta_L(q) = \sum_{x \in L} q^{(x,x)}=
1+\sum_{m=1}^{\infty}A_{2m}q^{2m}$
the theta series of $L$,
that is, $A_{2m}$ is the number of vectors of norm $2m$ in $L$.
In this subsection, we simply consider
the theta series as formal power series.
Then there exist integers $a_0=1$, $a_1,\ldots, 
a_{\lfloor \frac{n}{24}\rfloor}$ so that 
\begin{equation}\label{eq:T2}
\theta_L(q)=\sum_{r=0}^{\lfloor \frac{n}{24}\rfloor}
a_rE_4(q)^{\frac{n}{8}-3r}\Delta(q)^r,
\end{equation}
where
$E_4(q)=1+240\sum_{m=1}^{\infty}\sigma_3(m)q^{2m}$, 
$\Delta(q)=q^2\prod_{m=1}^{\infty}(1-q^{2m})^{24}$, and 
$\sigma_3(m)=\sum_{0<d|m}d^3$~{\cite[p.~193]{SPLAG}}
(see also~\cite[Proposition~15]{Pache}). 
As a consequence of (\ref{eq:T2}), 
an even unimodular lattice in dimension $n$
exists if and only
if $n \equiv 0 \pmod 8$. 
%, while an odd unimodular lattice
%exists for every dimension. 
It was shown in~\cite{Siegel}
that the coefficient $A_{2 \lfloor \frac{n}{24} \rfloor+2}$ 
is always positive when
$A_2=A_4=\cdots =A_{2\lfloor \frac{n}{24} \rfloor} =0$
(see also \cite{MOS75}).
Hence, 
the minimum norm $\min(L)$ of $L$ is bounded by
\[
\min(L) \le 2 \left\lfloor \frac{n}{24} \right\rfloor+2.
\]
%\end{equation}
We say that an even unimodular lattice meeting the upper
bound is {\em extremal}.
% In this case, 
% the theta series is uniquely determined, that is,
It follows that the theta series of an extremal even unimodular lattice in
each dimension is uniquely determined (see~{\cite[p.~193]{SPLAG}}). 

%%%%% existence or nonexistence of extremal
The existence of an extremal even  unimodular lattice
is known for dimension $n \le 80$ and $n \equiv 0 \pmod 8$
(see~\cite{S-http}).
The existence of an extremal even unimodular lattice
in dimension $72$ was a long-standing open question.
Recently, the first example of such a lattice has been
found by Nebe~\cite{Nebe72}.
%%%%%
On the other hand, Mallows, Odlyzko and Sloane~\cite{MOS75} showed 
that there is no extremal even unimodular lattice in 
dimension $n$ for all sufficiently large $n$
%% (larger than about $164000$ (see also~\cite{JR})) 
by verifying that $A_{2 \lfloor \frac{n}{24} \rfloor+4}$ is negative. 
Recently, 
it has been shown in~\cite{JR} that 
there is no extremal even unimodular lattice in dimension $n$ 
with $n>163264$ by verifying that
the largest $n$ for which all $A_i$ are non-negative is $163264$.
%Therefore, it is important to study
%the positivity and non-negativity of coefficients of 
%the theta series of extremal even unimodular lattices.
%%(see \cite{SPLAG, JR, MOS75}).

%An even unimodular lattice in dimension $n$
%exists if and only
%if $n \equiv 0 \pmod 8$, while an odd unimodular lattice
%exists for every dimension. 
%It was shown in \cite{MOS75} that
%the minimum norm $\min(L)$ of an even unimodular
%lattice $L$ in dimension $n$
%is bounded by
%\begin{equation}\label{eq:ub}
%\min(L) \le 2 \Big\lfloor \frac{n}{24} \Big\rfloor+2,
%\end{equation}
%(see also Section~\ref{sec:T}).
%We say that an even unimodular lattice meeting the upper
%bound is {\em extremal}.
%The existence of an extremal even  unimodular lattice
%is known for dimension $n \le 80$ and $n \equiv 0 \pmod 8$
%(see~\cite{S-http}).
%The existence of an extremal even unimodular lattice
%in dimension $72$ was a long-standing open question.
%Recently, the first example of such a lattice has been
%found by Nebe~\cite{Nebe72}.
%On the other hand, 
%in \cite{MOS75}, Mallows, Odlyzko, and Sloane showed 
%that there is no even unimodular lattice in 
%large dimensions (larger than about $164000$). 
%Recently, 
%it has been shown in \cite{JR} that the largest
%$n$ for which all $A_i$ are non-negative is $163264$.
%Namely, there is no even unimodular lattice in dimension $n$ 
%with $n>163264$.

%%% Definition of neighbor and shadow
Two lattices $L$ and $L'$ are {\em neighbors} if
both lattices contain a sublattice of index $2$
in common.
Let $L$ be an odd unimodular lattice and let $L_0$ denote its
sublattice of vectors of even norms.
Then $L_0$ is a sublattice of $L$ of index $2$.
$L_0^* \setminus L$ is called the shadow $S$ of $L$~\cite{CS-odd}.
There are cosets $L_1,L_2,L_3$ of $L_0$ such that
$L_0^* = L_0 \cup L_1 \cup L_2 \cup L_3$, where
$L = L_0  \cup L_2$ and $S = L_1 \cup L_3$.
If $L$ is an odd unimodular lattice in dimension divisible by eight,
then $L$ has two even unimodular neighbors
of $L$, namely, $L_0 \cup L_1$ and $L_0 \cup L_3$.

%%%%
\subsection{Construction A and $k$-frames}

We give a method to construct 
unimodular lattices from self-dual $\ZZ_{k}$-codes, which 
is referred to as {\em Construction A} (see~\cite{{BDHO},{HMV}}). 
Let $\rho$ be a map from $\ZZ_{k}$ to $\ZZ$ sending $0, 1, \ldots , k-1$ 
to $0, 1, \ldots , k-1$, respectively.
If $C$ is a  self-dual $\ZZ_{k}$-code of length $n$, then 
the lattice 
\[
A_{k}(C)=\frac{1}{\sqrt{k}}\{\rho (C) +k \ZZ^{n}\} 
\]
is a unimodular lattice in dimension $n$, where 
$
\rho (C)=\{(\rho (c_{1}), \ldots , \rho (c_{n})) 
\mid (c_{1}, \ldots , c_{n}) \in C\}. 
$
The minimum norm of $A_{k}(C)$ is $\min\{k, d_{E}(C)/k\}$.
Moreover, $C$ is a Type~II $\ZZ_{2k}$-code if and only if
$A_{2k}(C)$ is an even unimodular lattice~\cite{BDHO}.

%%%%%
A set $\{f_1, \ldots, f_{n}\}$ of $n$ vectors $f_1, \ldots, f_{n}$ of 
a unimodular lattice $L$ in dimension $n$ with
$ ( f_i, f_j ) = k \delta_{i,j}$
is called a {\em $k$-frame} of $L$,
where $\delta_{i,j}$ is the Kronecker delta.
It is known that a unimodular lattice $L$ contains a $k$-frame 
if and only if there exists a self-dual $\ZZ_{k}$-code $C$ with 
$A_{k}(C) \cong L$.
In addition, an even unimodular lattice $L$ contains a $2k$-frame 
if and only if there exists a Type~II $\ZZ_{2k}$-code $C$ 
with $A_{2k}(C) \cong L$ (see~\cite{{Chapman},{HMV}}). 
Hence, we directly have the following lemma.

\begin{lem}\label{lem:2-1}
% If there is an extremal even unimodular lattice $L$ in 
% dimension $n$ containing a $2k$-frame,
% then there is an extremal Type~II $\ZZ_{2k}$-code $C$
% of length $n$ with $A_{2k}(C) \cong L$, when $n \le 136$.
Suppose that $n \le 136$ and $k \ge  \lfloor n/24 \rfloor+1$.
There is an extremal even unimodular lattice in 
dimension $n$ containing a $2k$-frame
if and only if there is an extremal Type~II $\ZZ_{2k}$-code
of length $n$.
\end{lem}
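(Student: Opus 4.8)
The plan is to exploit the already-recorded equivalence between $2k$-frames in an even unimodular lattice $L$ and Type~II $\ZZ_{2k}$-codes $C$ with $A_{2k}(C) \cong L$, and then to check that, under the hypothesis $k \ge \lfloor n/24 \rfloor +1$, the extremality of the lattice matches the extremality of the code exactly. The only quantitative ingredient I need is the stated minimum-norm formula: for a self-dual $\ZZ_{2k}$-code $C$ of length $n$ one has $\min(A_{2k}(C)) = \min\{2k,\, d_E(C)/(2k)\}$. The hypothesis enters solely through the inequality $2k \ge 2\lfloor n/24 \rfloor +2$, to which it is equivalent, so I would isolate that observation at the very start and use it in both directions.

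For the implication from codes to lattices, suppose $C$ is an extremal Type~II $\ZZ_{2k}$-code of length $n$. Since $C$ is Type~II, $A_{2k}(C)$ is an even unimodular lattice, and applying the frame equivalence with $L = A_{2k}(C)$ (using $C$ itself as the witnessing code) shows that $L$ contains a $2k$-frame. Extremality of $C$ means $d_E(C) = 4k\lfloor n/24 \rfloor + 4k$, hence $d_E(C)/(2k) = 2\lfloor n/24 \rfloor + 2$; combining this with $2k \ge 2\lfloor n/24 \rfloor + 2$ in the minimum-norm formula gives $\min(L) = 2\lfloor n/24 \rfloor + 2$, so that $L$ is an extremal even unimodular lattice containing a $2k$-frame, as required.

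For the reverse implication, suppose $L$ is an extremal even unimodular lattice of dimension $n$ containing a $2k$-frame. The frame equivalence produces a Type~II $\ZZ_{2k}$-code $C$ with $A_{2k}(C) \cong L$, so that $\min\{2k,\, d_E(C)/(2k)\} = \min(L) = 2\lfloor n/24 \rfloor + 2$. Using $2k \ge 2\lfloor n/24 \rfloor + 2$ together with the upper bound~(\ref{Eq:B}), I would argue that the quantity $d_E(C)/(2k)$ is forced to equal $2\lfloor n/24 \rfloor + 2$, whence $d_E(C) = 4k\lfloor n/24 \rfloor + 4k$ and $C$ is extremal. The one point deserving care --- and essentially the only place the argument could slip --- is the boundary case $2k = 2\lfloor n/24 \rfloor + 2$, where the minimum might a priori be attained by the term $2k$ even if $d_E(C)/(2k)$ were strictly larger; here one invokes~(\ref{Eq:B}) to rule out $d_E(C)/(2k) < 2\lfloor n/24 \rfloor + 2$ and to conclude extremality regardless. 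This modest case analysis is the main obstacle; the remainder is a direct transcription of the cited equivalences between self-dual codes and lattice frames.
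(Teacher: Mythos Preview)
Your argument is correct and is exactly the unpacking the paper has in mind: in the paper the lemma is stated without proof, immediately after recording the frame/Type~II correspondence and the formula $\min(A_{2k}(C))=\min\{2k,\,d_E(C)/(2k)\}$, with the words ``Hence, we directly have the following lemma.'' Your write-up supplies precisely those details.

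One small slip in the boundary case $2k=2\lfloor n/24\rfloor+2$: you write that one invokes~(\ref{Eq:B}) to rule out $d_E(C)/(2k)<2\lfloor n/24\rfloor+2$. In fact the inequality $d_E(C)/(2k)\ge 2\lfloor n/24\rfloor+2$ already follows from $\min\{2k,\,d_E(C)/(2k)\}=2\lfloor n/24\rfloor+2$ (both terms of a minimum are at least the minimum value); what~(\ref{Eq:B}) supplies is the upper bound $d_E(C)/(2k)\le 2\lfloor n/24\rfloor+2$, ruling out the ``strictly larger'' scenario you correctly identified as the concern. With that attribution fixed, the argument is complete.
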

% \begin{rem}
% The condition $k \ge  \lfloor n/24 \rfloor+1$ is automatically 
% satisfied in the above lemma.
% \end{rem}

%%%%%%%%%%%%%%%%%%%%%%%%%%%%%%%%%%
\section{Methods for constructing $k$-frames}\label{sec:frame}

% The following lemma is useful to provide the
% existence of $k$-frames.

By the following lemma, it is enough to consider 
a $p$-frame (resp.\ $2p$-frame)
of an odd (resp.\ even) unimodular lattice
for each prime $p$.

\begin{lem}[{Chapman~\cite[Lemma 5.1]{Chapman}}]
\label{lem:frame}
% Let $n$ be a positive integer divisible by four.
If a lattice $L$ in dimension $n\equiv 0 \pmod 4$ contains a $k$-frame, then
$L$ contains a $km$-frame for every positive integer $m$.
\end{lem}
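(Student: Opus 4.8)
The plan is to reduce the claim to a purely matrix-theoretic fact: for every positive integer $m$ there is a $4 \times 4$ integer matrix $M$ satisfying $MM^T = mI$. Granting this, one can transform the given $k$-frame into a $km$-frame by applying $M$ to the frame vectors four at a time, which is exactly where the hypothesis $4 \mid n$ enters.

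First I would produce the matrix $M$. By Lagrange's four-square theorem, write $m = a^2+b^2+c^2+d^2$ with $a,b,c,d \in \ZZ$, and set
\[
M=
\begin{pmatrix}
a & b & c & d\\
-b & a & -d & c\\
-c & d & a & -b\\
-d & -c & b & a
\end{pmatrix}.
\]
A direct computation (this matrix arises from the quaternion $a+bi+cj+dk$ of norm $m$, via the Euler four-square identity) shows that $MM^T = (a^2+b^2+c^2+d^2)I = mI$.

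Next, let $\{f_1,\dots,f_n\}$ be the given $k$-frame, so $(f_i,f_j)=k\delta_{i,j}$. Since $n \equiv 0 \pmod 4$, I would partition the index set into $n/4$ consecutive blocks of size four, and on the block $\{f_{4t+1},f_{4t+2},f_{4t+3},f_{4t+4}\}$ define four new vectors $g_{4t+s} = \sum_{j=1}^4 M_{s,j}\, f_{4t+j}$ for $s=1,2,3,4$. Because $M$ has integer entries and each $f_j$ lies in $L$, every $g_i$ lies in $L$. To check that $\{g_1,\dots,g_n\}$ is a $km$-frame, note that two vectors coming from different blocks involve disjoint subsets of the mutually orthogonal $f_j$, so their inner product vanishes; while for two vectors in the same block,
\[
(g_{4t+s},g_{4t+s'}) = \sum_{j,j'} M_{s,j}M_{s',j'}(f_{4t+j},f_{4t+j'}) = k\sum_{j} M_{s,j}M_{s',j} = k(MM^T)_{s,s'}=km\,\delta_{s,s'},
\]
using $(f_i,f_j)=k\delta_{i,j}$ and $MM^T=mI$. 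Hence $(g_i,g_{i'})=km\,\delta_{i,i'}$, as required.

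The only genuine content is the existence of the integer matrix $M$ with $MM^T=mI$, which is supplied by the four-square theorem; once that is in hand there is no serious obstacle. The role of the divisibility hypothesis $4 \mid n$ is precisely to allow the frame to be processed in quaternionic blocks of four, matching the $4\times 4$ size of $M$.
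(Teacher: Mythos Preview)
Your proof is correct. The paper does not supply its own argument for this lemma---it simply quotes the result from Chapman~\cite{Chapman}---and your approach is precisely Chapman's original one: write $m=a^2+b^2+c^2+d^2$ via Lagrange, form the quaternionic $4\times 4$ integer matrix $M$ with $MM^T=mI$, and apply it block by block to the given $k$-frame using $4\mid n$.
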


As a consequence, we have the following:

\begin{lem}\label{lem:neighbor}
If an odd unimodular lattice $L$ in dimension $n\equiv 0 \pmod 8$
contains a $k$-frame,
then both even unimodular neighbors
$L_0 \cup L_1$ and $L_0 \cup L_3$ contain a $2k$-frame.
\end{lem}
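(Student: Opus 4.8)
The plan is to reduce everything to a single parity observation, after first upgrading the given $k$-frame to a $2k$-frame. Since $n \equiv 0 \pmod 8$ in particular gives $n \equiv 0 \pmod 4$, Chapman's Lemma~\ref{lem:frame} applies with $m = 2$: from the $k$-frame in $L$ I obtain a $2k$-frame $\{f_1, \ldots, f_n\}$ that lies in $L$, with $(f_i, f_j) = 2k\,\delta_{i,j}$.

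The key point is that the vectors of this $2k$-frame automatically lie in $L_0$. Indeed, each $f_i$ has norm $(f_i, f_i) = 2k$, which is even, and $L_0$ is by definition the sublattice of $L$ consisting of all vectors of even norm; hence $f_i \in L_0$ for every $i$. I would then recall that $L_0 \subseteq L_0 \cup L_1$ and $L_0 \subseteq L_0 \cup L_3$, so the set $\{f_1, \ldots, f_n\}$ is contained in each of the two even unimodular neighbors. Because the defining property of a frame involves only the inner products $(f_i, f_j) = 2k\,\delta_{i,j}$, which do not depend on the ambient lattice, this same set is a $2k$-frame of both $L_0 \cup L_1$ and $L_0 \cup L_3$, which is exactly what is claimed.

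I expect no genuine obstacle here beyond the invocation of Chapman's lemma; the entire content is the remark that even-norm frame vectors are trapped inside the common sublattice $L_0$ shared by both neighbors. It is worth noting \emph{why} the original $k$-frame does not already settle the matter: when $k$ is odd its vectors have odd norm and therefore lie in the coset $L_2 = L \setminus L_0$, which meets neither neighbor; passing to a $2k$-frame is precisely the step that removes this parity obstruction and forces the frame into $L_0$.
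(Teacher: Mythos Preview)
Your argument is correct and is exactly the reasoning the paper has in mind: the lemma is stated immediately after Chapman's Lemma~\ref{lem:frame} with no further proof, introduced only by ``As a consequence, we have the following.'' Your write-up simply makes explicit the one-line parity observation (the $2k$-frame vectors have even norm, hence lie in $L_0 \subseteq L_0 \cup L_i$) that the paper leaves to the reader.
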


Chapman~\cite{Chapman} showed that the Leech lattice $\Lambda_{24}$
contains a $2k$-frame for every positive integer $k$
with $k \ge 2$ and $k\ne 11^{\ell}$, 
where $\ell$ is a positive integer.
% integer\footnote{Recently, the second author~\cite{Miezaki}
% has shown that the odd Leech lattice contains a $k$-frame for a 
% positive integer $k$ with $k \ge 3$ by the approach
% which is similar to that in~\cite{Chapman}.}.
This was established from a construction of 
$2k$-frames of $\Lambda_{24}$, which is the case 
$(n,m)=(12,11)$ of 
Proposition~\ref{prop:constZ4},
using some extremal
Type~II $\ZZ_4$-code $C$ with $A_4(C) \cong \Lambda_{24}$,
along with Lemma~\ref{lem:frame} and 
Theorem~\ref{thm:prime11}.

\begin{prop}\label{prop:constZ4}
Let $W$ be a skew-symmetric weighing matrix of order 
$n \equiv 0 \pmod 4$ and weight $m \equiv 3 \pmod 8$.
Let $\tilde{C}_4(W)$ be the Type~II $\ZZ_4$-code of length $2n$
with generator matrix
$\left(\begin{array}{cc}
I & W+2I
\end{array}\right)$, where 
the entries of the matrix are regarded as elements of $\ZZ_4$.
Let $a,b,c$ and $d$ be integers with 
$c \equiv 2a+b \pmod 4$ and
$d \equiv a+2b \pmod 4$. 
%% $a^2+m b^2+c^2+m d^2 \equiv 0 \pmod8$.
Then the set of $2n$ rows of the following matrix
\[
\tilde{F}(W)=
\frac{1}{2}
\left(
\begin{array}{cc}
aI+bW & cI+dW \\
-cI+dW & aI-bW
\end{array}
\right)
\]
forms a $\frac{1}{4}(a^2+m b^2+c^2+m d^2)$-frame of
the even unimodular lattice $A_4(\tilde{C}_4(W))$.
\end{prop}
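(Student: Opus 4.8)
The plan is to verify directly that the $2n$ rows of $\tilde{F}(W)$ form the asserted frame, by computing the Gram matrix $\tilde{F}(W)\tilde{F}(W)^T$ and checking (i) that each row is an honest lattice vector of $A_4(\tilde{C}_4(W))$ and (ii) that the rows are mutually orthogonal with the correct common norm. The congruence hypotheses $c \equiv 2a+b$ and $d \equiv a+2b \pmod 4$ are exactly the conditions that will force membership in the code-part of the lattice, so I would treat them as the mechanism for step (i).

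First I would set $N = W + 2I$, so that $\left(\begin{array}{cc} I & N \end{array}\right)$ generates $\tilde{C}_4(W)$ over $\ZZ_4$, and recall that $A_4(\tilde{C}_4(W)) = \frac{1}{2}\{\rho(\tilde{C}_4(W)) + 4\ZZ^{2n}\}$. A vector $\frac{1}{2}v \in \RR^{2n}$ lies in this lattice precisely when $v \bmod 4$ is a codeword, i.e. when the left half of $v$ determines the right half via multiplication by $N$ modulo $4$. Writing a single row of $\tilde{F}(W)$ in the block form $\frac{1}{2}(x \mid y)$, I would check that $y \equiv x N \pmod 4$. For a row coming from the top block this amounts to verifying $cI+dW \equiv (aI+bW)(W+2I) \pmod 4$; expanding the right side using $W^2 = W W^T \cdot(-1)$—here I use $W^T=-W$ and $WW^T = mI$, so $W^2 = -mI$—reduces everything to linear combinations of $I$ and $W$, and the congruence becomes exactly $c \equiv 2a+b$ and $d \equiv a+2b \pmod 4$ after substituting $m \equiv 3 \pmod 8$ (hence $m \equiv 3 \pmod 4$). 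The bottom block is handled identically, using the skew-symmetry to line up signs.

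Next I would compute the Gram matrix. Since each entry carries a factor $\frac{1}{2}$, the product $\tilde{F}(W)\tilde{F}(W)^T$ picks up a factor $\frac{1}{4}$ in front, and the four $n\times n$ blocks of the product are computed from $aI \pm bW$, $\pm cI + dW$ using only the identities $W^T = -W$ and $WW^T = mI$. The diagonal blocks both collapse to $(a^2 + mb^2 + c^2 + md^2)\,I$ after the cross terms in $W$ cancel by skew-symmetry, and the off-diagonal blocks vanish for the same reason; this yields $\tilde{F}(W)\tilde{F}(W)^T = \frac{1}{4}(a^2 + mb^2 + c^2 + md^2)\,I$, which is precisely the frame condition $(f_i,f_j) = \frac{1}{4}(a^2+mb^2+c^2+md^2)\,\delta_{i,j}$.

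The main obstacle is bookkeeping rather than conceptual: I must keep the two independent reductions straight, namely the \emph{mod-$4$} reduction governing lattice membership (step i) and the \emph{integer} identity $W^2 = -mI$ governing the inner products (step ii), and not conflate $m \bmod 4$ with $m$ itself. The hypothesis $m \equiv 3 \pmod 8$ (stronger than needed for the Gram computation) is what guarantees $\tilde{C}_4(W)$ is genuinely Type~II and $A_4(\tilde{C}_4(W))$ is even unimodular, so I would note at the outset that this places us in the setting where $A_4(\tilde{C}_4(W))$ is an even unimodular lattice of dimension $2n$, making a $\frac{1}{4}(a^2+mb^2+c^2+md^2)$-frame meaningful.
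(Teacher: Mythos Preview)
Your proposal is correct and follows essentially the same approach as the paper: verify that the rows of $\tilde{F}(W)$ lie in $A_4(\tilde{C}_4(W))$ via the congruence conditions, then compute $\tilde{F}(W)\tilde{F}(W)^T=\frac{1}{4}(a^2+mb^2+c^2+md^2)I$ using $W^T=-W$ and $WW^T=mI$. The only cosmetic difference is in handling the bottom $n$ rows: the paper observes that, by self-duality and skew-symmetry, $(W+2I \mid I)$ is \emph{also} a generator matrix of $\tilde{C}_4(W)$, so the bottom rows are checked against that form directly, whereas you check everything against $(I \mid W+2I)$ and implicitly invert the map $(a,b)\mapsto(2a+b,a+2b)\bmod 4$ (which is a unit, so this works).
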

\begin{proof}
By~\cite[Proposition 4]{Z4-H}, $\tilde{C}_4(W)$ is Type~II.
Then $A_4(\tilde{C}_4(W))$ is an even unimodular lattice.
Since $\tilde{C}_4(W)$ is self-dual and $W$ is skew-symmetric,
the matrix 
$\left(\begin{array}{cc}
W+2I & I
\end{array}\right)$, which is a parity-check matrix,
is also a generator matrix of $\tilde{C}_4(W)$.
Hence, it follows from 
$c \equiv 2a+b \pmod 4$ and $d \equiv a+2b \pmod 4$ that
the rows of $\tilde{F}(W)$ are vectors of
 $A_4(\tilde{C}_4(W))$.
Since $\tilde{F}(W) \tilde{F}(W)^T=\frac{1}{4}(a^2+mb^2+c^2+m d^2)I$,
the set of the $2n$ rows of $\tilde{F}(W)$ forms a
$\frac{1}{4}(a^2+m b^2+c^2+m d^2)$-frame of $A_4(\tilde{C}_4(W))$.
\end{proof}

\begin{rem}
It follows from the assumption 
that $a^2+m b^2+c^2+m d^2 \equiv 0 \pmod8$.
\end{rem}

%% \begin{rem}
%% The case $(n,m)=(24,11)$ was studied in \cite{Chapman}.
%% \end{rem}

\begin{thm}[{Chapman~\cite[Theorem~5.2]{Chapman}}]
\label{thm:prime11}
There are integers $a,b,c$ and $d$ satisfying
$c \equiv 2a+b \pmod 4$,
$d \equiv a+2b \pmod 4$ and 
$2p=\frac{1}{4}(a^2+11b^2+c^2+11d^2)$ for each odd prime $p \ne 11$.
\end{thm}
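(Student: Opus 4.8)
The condition $2p=\tfrac14(a^2+11b^2+c^2+11d^2)$ is just $a^2+11b^2+c^2+11d^2=8p$ together with the two congruences $c\equiv 2a+b$, $d\equiv a+2b\pmod 4$, which (by the preceding Remark) already force the left side to be divisible by $8$. So the statement is precisely the assertion that the rank-$4$ lattice $\Lambda=\{(x_1,x_2,x_3,x_4)\in\ZZ^4:\ x_3\equiv 2x_1+x_2,\ x_4\equiv x_1+2x_2 \pmod 4\}$, equipped with the quadratic form $Q=x_1^2+11x_2^2+x_3^2+11x_4^2$, represents $8p$. The plan is to attack this by counting: let $r(N)$ be the number of $v\in\Lambda$ with $Q(v)=N$, i.e.\ the $N$-th coefficient of the theta series $\Theta_\Lambda(\tau)=\sum_{v\in\Lambda}q^{Q(v)}$ with $q=e^{2\pi i\tau}$, and show $r(8p)>0$.

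The tool is the theory of modular forms, exactly as used for the sibling Theorems~\ref{thm:prime23} and~\ref{thm:prime29}. Working out a basis of $\Lambda$ (for instance $(1,0,2,1),(0,1,1,2),(0,0,4,0),(0,0,0,4)$) gives a Gram matrix of determinant $16^2\cdot 11^2=176^2$, a perfect square; hence $\Theta_\Lambda$ is a modular form of weight $2$ on some $\Gamma_0(N)$ with \emph{trivial} nebentypus. I would decompose $\Theta_\Lambda=E+S$ into its Eisenstein and cuspidal components. By Siegel's formula the Eisenstein coefficient at $8p$ equals the genus-averaged representation number, a product of local densities times an archimedean factor that grows \emph{linearly} in $p$; by Eichler--Shimura/Deligne the cuspidal coefficient is $O\big((8p)^{1/2+\epsilon}\big)$. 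Thus as soon as the Eisenstein main term is positive and $p$ is large, the main term dominates the error and $r(8p)>0$.

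Positivity of the main term is a purely local question, and this is where the hypothesis $p\neq 11$ enters and is the crux of the argument. The $2$-adic densities are positive because the congruences defining $\Lambda$ were chosen so that $\Lambda$ represents $8p$ $2$-adically for every odd $p$; I would confirm this by exhibiting an explicit $2$-adic solution (the pattern already visible in the solution $(a,b,c,d)=(1,1,-1,-1)$ for $p=3$). At the prime $11$ one has $Q\equiv x_1^2+x_3^2\pmod{11}$, and since $-1$ is a nonresidue mod $11$ this binary form is the norm form of $\FF_{121}/\FF_{11}$: it represents every \emph{nonzero} residue but represents $0$ only trivially. Hence $8p$ is representable mod $11$ precisely when $11\nmid p$, i.e.\ exactly when $p\neq 11$. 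This anisotropy is the true source of the exclusion and mirrors the $k\neq 11^{\ell}$ restriction in Chapman's frame theorem for the Leech lattice.

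Assembling these ingredients, for all but finitely many primes $p\neq 11$ the linear main term beats the $O(p^{1/2+\epsilon})$ error, giving $r(8p)>0$; the finitely many small primes are then dispatched by a direct search. I expect the main obstacles to be two routine-but-delicate points: making the cusp-form bound \emph{effective} enough to pin down a concrete threshold so that the remaining finite check is genuinely finite, and carrying out the local density computations at $2$ and $11$ cleanly. Everything else is bookkeeping with the weight-$2$ Eisenstein series.
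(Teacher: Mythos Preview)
The paper does not prove Theorem~\ref{thm:prime11} itself---it is quoted from Chapman---but your plan is exactly the template the paper carries out in detail for the companion Theorems~\ref{thm:prime23} and~\ref{thm:prime29}: realise the theta series of the constrained lattice as a weight-$2$ form on a suitable $\Gamma_0(N)$, split off an explicit Eisenstein-type piece, bound the cuspidal remainder via Deligne after passing to Hecke eigenforms, and mop up the small primes by direct computation. So for the theorem as stated your sketch is sound and in line with the paper's methodology.

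One point deserves correction. You assert that the exclusion $p\neq 11$ is a \emph{local} obstruction at $11$, since $Q\equiv a^{2}+c^{2}\pmod{11}$ is anisotropic and ``represents $0$ only trivially.'' But the quaternary form still has the variables $b$ and $d$: over $\FF_{11}$ one may take $a\equiv c\equiv 0$ with $b,d$ arbitrary, and $11$-adically, writing $a=11a'$, $c=11c'$, the equation $Q=88$ becomes $11(a'^{2}+c'^{2})+b^{2}+d^{2}=8$, whence $b^{2}+d^{2}\equiv 8\pmod{11}$, which is solvable and lifts by Hensel. Thus the $11$-adic density at $8\cdot 11=88$ is \emph{positive}. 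The failure at $p=11$ is global, not local: the genus of $(\Lambda,Q)$ contains more than one class, and at $N=88$ the cuspidal correction exactly cancels the Eisenstein main term. (Indeed, over $\ZZ$ the only solutions of $a^{2}+c^{2}+11(b^{2}+d^{2})=88$ are $(a,b,c,d)=(0,\pm 2,0,\pm 2)$, none of which satisfy the mod-$4$ congruences.) This does not damage your argument for odd $p\neq 11$, where your local analysis at $11$ is correct; but the claim that anisotropy is ``the true source of the exclusion'' should be withdrawn.
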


%%%%%%%%%%%%%%% Chapman case %%%%%%%%%%%%%%%

For dimensions $2n=32$ and $40$,
we are able to employ the approach which is similar to that 
in~\cite{Chapman, GH01},
by considering skew-symmetric weighing matrices of order $n$ and weight $11$
(Section~\ref{sec:3240}).

%% By Lemma \ref{lem:frame} and 
%% Theorem \ref{thm:prime11},
%% we show the existence of $2k$-frames of some extremal even
%% unimodular lattices $A_4(C)$ 
%% from some extremal Type~II $\ZZ_4$-code $C$ obtained by
%% Proposition \ref{prop:constZ4},
%% % equivalently, the existence of extremal Type~II $\ZZ_{2k}$-codes
%% % of lengths $32$ and $40$,
%% for a positive integer $k$
%% with $k \ge 2$ and $k\ne 11^{\ell}$, 
%% where $\ell$ is a positive integer (Section \ref{sec:3240}).

%%%%%%%%%%%%%%% new case %%%%%%%%%%%%%%%
On the other hand,
for dimensions $48,56,$ and $64$, 
no $\ZZ_4$-code gives an extremal even unimodular lattice 
by Construction A.
Hence, we consider extremal even unimodular lattices
which are even unimodular neighbors of some odd unimodular lattices 
constructed from self-dual $\ZZ_k$-codes $(k=3,5)$
(Sections~\ref{sec:48}, \ref{sec:56} and \ref{sec:64}).
To do this, we provide the following modification of
Proposition~\ref{prop:constZ4}.

\begin{prop}\label{prop:constZk}
Let $k$ be a positive integer with $k \ge 2$.
Let $M$ be an $n \times n$ 
$(0,\pm 1,\pm2,\ldots,\pm \lfloor k/2 \rfloor)$-matrix
satisfying $M^T=-M$ and $M M^T= mI$, where $m \equiv -1 \pmod k$.
Let $C_k(M)$ be the self-dual $\ZZ_k$-code of length $2n$
with generator matrix
$\left(\begin{array}{cc}
I & M
\end{array}\right)$, where 
the entries of the matrix are regarded as elements of $\ZZ_k$.
Let $a,b,c$ and $d$ be integers with 
$a \equiv d \pmod k$ and 
$b \equiv c \pmod k$.
%% $a^2+m b^2+c^2+m d^2 \equiv 0 \pmod k$.
Then the set of $2n$ rows of the following matrix 
\[
F(M)=
\frac{1}{\sqrt{k}}
\left(
\begin{array}{cc}
aI+bM & cI+dM \\
-cI+dM & aI-bM
\end{array}
\right)
\]
forms a $\frac{1}{k}(a^2+m b^2+c^2+m d^2)$-frame of 
the unimodular lattice $A_k(C_k(M))$.
\end{prop}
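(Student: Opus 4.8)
The plan is to mirror the structure of the proof of Proposition~\ref{prop:constZ4}, checking two things: that the rows of $F(M)$ actually lie in the unimodular lattice $A_k(C_k(M))$, and that they are mutually orthogonal with the correct norm. The orthogonality part is the purely formal computation $F(M)F(M)^T = \frac{1}{k}(a^2+mb^2+c^2+md^2)I$; since $MM^T = mI$ and $M^T=-M$, the cross terms in the block product cancel in exactly the way they do in Proposition~\ref{prop:constZ4}, so each of the $2n$ rows has norm $\frac{1}{k}(a^2+mb^2+c^2+md^2)$ and distinct rows are orthogonal. This gives a $\frac{1}{k}(a^2+mb^2+c^2+md^2)$-frame \emph{provided} the rows belong to the lattice.

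The main point, and the only place where the hypotheses $a\equiv d$ and $b\equiv c \pmod k$ are used, is showing that each row of $\sqrt{k}\,F(M)$ reduces modulo $k$ to a codeword of $C_k(M)$. First I would note that, because $C_k(M)$ is self-dual with generator matrix $(\,I \mid M\,)$ and $M^T=-M$, the matrix $(\,M \mid I\,) = (\,-M^T \mid I\,)$ is a parity-check matrix and hence also generates $C_k(M)$; this is the analogue of the remark in Proposition~\ref{prop:constZ4} that $(\,W+2I \mid I\,)$ generates $\tilde C_4(W)$. Now consider the top block of rows, coming from $(aI+bM \mid cI+dM)$. Reducing modulo $k$ and using $a\equiv d$, $b\equiv c$, this is congruent to $a(I \mid M)\cdot(\text{something})$ arranged so that $(aI+bM \mid cI+dM) \equiv a(I\mid M) + b(M\mid I) \pmod k$ after substituting $c\equiv b$ and $d\equiv a$; each summand is a $\ZZ_k$-combination of rows of a generator matrix of $C_k(M)$, so the reduction lies in $C_k(M)$. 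The bottom block $(-cI+dM \mid aI-bM)$ is handled the same way, reducing to $a(M\mid I) - b(I\mid M) \pmod k$ after the substitutions. Thus every row of $\sqrt{k}\,F(M)$ lies in $\rho(C_k(M))+k\ZZ^{2n}$, so every row of $F(M)$ lies in $A_k(C_k(M))$.

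I expect the congruence bookkeeping in that middle step to be the only real obstacle: one must be careful that the two generator matrices $(\,I\mid M\,)$ and $(\,M\mid I\,)$ are being combined with the right coefficients, and that the sign in $-cI+dM$ and in $M^T=-M$ are tracked consistently when rewriting the bottom block in terms of the parity-check generator. Once the membership is settled, the frame property follows immediately from the orthogonality computation above, completing the proof. Note that unlike Proposition~\ref{prop:constZ4} there is no Type~II claim to verify here, since the conclusion is only about the unimodular lattice $A_k(C_k(M))$ and not about its parity, so no analogue of the divisibility remark is needed.
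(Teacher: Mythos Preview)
Your proposal is correct and follows essentially the same approach as the paper: verify that $(\,M\mid I\,)$ is also a generator matrix of $C_k(M)$ via self-duality and $M^T=-M$, use the congruences $a\equiv d$, $b\equiv c\pmod k$ to see that each row of $\sqrt{k}\,F(M)$ lies in $C_k(M)$ modulo $k$, and then compute $F(M)F(M)^T=\frac{1}{k}(a^2+mb^2+c^2+md^2)I$. Your explicit decompositions $(aI+bM\mid cI+dM)\equiv a(I\mid M)+b(M\mid I)$ and $(-cI+dM\mid aI-bM)\equiv a(M\mid I)-b(I\mid M)$ are exactly the content the paper leaves implicit.
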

\begin{proof}
Since 
%% $M M^T \equiv -I \pmod k$,
$M M^T= mI$ with $m \equiv -1 \pmod k$,
$C_k(M)$ is a self-dual $\ZZ_k$-code of length $2n$.
Thus, $A_k(C_k(M))$ is a unimodular lattice.
Since $C_k(M)$ is self-dual and $M^T=-M$,
the matrix 
$\left(\begin{array}{cc}
M & I
\end{array}\right)$ is also a generator matrix of $C_k(M)$.
Hence, it follows from 
$a \equiv d \pmod k$ and $b \equiv c \pmod k$ that
all rows of the matrix $F(M)$
are vectors of $A_k(C_k(M))$.
Since $F(M) F(M)^T=\frac{1}{k}(a^2+mb^2+c^2+md^2)I$,
the set of the $2n$ rows of $F(M)$ forms a
$\frac{1}{k}(a^2+m b^2+c^2+m d^2)$-frame of $A_k(C_k(M))$.
\end{proof}

\begin{rem}
It follows from the assumption 
that $a^2+m b^2+c^2+m d^2 \equiv 0 \pmod k$.
\end{rem}

For dimensions $48,56$ and $64$, we consider cases
$(m,k)=(23,3),(29,5)$ in Proposition~\ref{prop:constZk}.
Hence, we need the following modifications of
Theorem~\ref{thm:prime11}.
%%%%%
The proofs of Theorems~\ref{thm:prime23} and \ref{thm:prime29}
need some facts of modular forms for congruence subgroups. 
%% For details of modular forms, see \cite{Miyake}. 
Our notation and terminology for modular forms
follow from~\cite{Miyake}
(see~\cite{Miyake} for undefined terms).

\begin{thm}\label{thm:prime23}
There are integers $a,b,c$ and $d$ satisfying
$a \equiv d \pmod 3$,
$b \equiv c \pmod 3$ and 
$p=\frac{1}{3}(a^2+23b^2+c^2+23d^2)$ for each prime $p \ne 2,5,7, 23$.
\end{thm}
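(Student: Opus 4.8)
The plan is to recast the assertion as a representation problem for a positive definite quaternary form and then attack it by the modular-forms version of the circle method. As in the remark following Proposition~\ref{prop:constZk}, on the index-$9$ sublattice $\Lambda=\{(a,b,c,d)\in\ZZ^4 : a\equiv d,\ b\equiv c \pmod 3\}$ the form $Q=a^2+23b^2+c^2+23d^2$ is everywhere divisible by $3$. Taking the basis $(3,0,0,0),(1,0,0,1),(0,3,0,0),(0,1,1,0)$ of $\Lambda$ identifies $\tfrac13 Q|_\Lambda$ with the integral quaternary form $Q'$ whose Gram matrix is the orthogonal sum of $\left(\begin{smallmatrix}3&1\\1&8\end{smallmatrix}\right)$ and $\left(\begin{smallmatrix}69&23\\23&8\end{smallmatrix}\right)$, two binary forms of discriminant $-92$ and determinant $23^2$. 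Thus the statement is equivalent to: $Q'$ represents every prime $p\ne 2,5,7,23$.

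Next I would study the theta series $\theta_{Q'}(z)=\sum_{v}q^{Q'(v)}$ with $q=e^{2\pi i z}$, which is a modular form of weight $2$ on $\Gamma_0(92)$ with trivial character ($92=4\cdot 23$ is the level, and the two quadratic nebentypus characters multiply to the trivial one). Following \cite{Miyake}, decompose $\theta_{Q'}=E+S$ with $E$ Eisenstein and $S\in S_2(\Gamma_0(92))$ cuspidal. The $n$-th coefficient of $E$ is the number of representations of $n$ by the genus of $Q'$, a product of local densities, and I would check these are positive for $n=p$ with $p\nmid 2\cdot 23$: a nondegenerate quaternary $\ZZ_\ell$-form is universal for odd $\ell\nmid 23$, at $\ell=23$ the rank-$4$ reduction represents every residue by a pigeonhole count mod $23$, and at $\ell=2$ the form is odd and $\ZZ_2$-unimodular, hence represents all odd integers. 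The archimedean factor then gives $a_E(p)\asymp p$. By Deligne's bound (Ramanujan--Petersson in weight $2$) the cusp coefficients satisfy $|a_S(p)|\le d(p)\sqrt p=2\sqrt p$, so $r_{Q'}(p)=a_E(p)+a_S(p)\ge c_1 p-2\sqrt p$, which is positive once $p$ exceeds an explicit bound $p_0$.

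It remains to verify by direct computation (using {\sc Magma}~\cite{Magma}) the finitely many primes $p\le p_0$: the only failures are $p=5,7$ together with the bad primes $p=2,23$. None of these is a local obstruction, since the principal form $x^2+23y^2+u^2+23v^2$ in the same genus represents each of them; rather they fail for $Q'$ globally, which is exactly what the cusp form detects at small arguments. Concretely, the nonzero values of $Q'$ below $8$ are only $3$ and $6$, so $5$ and $7$ are omitted, while a short case check shows $23$ is not a sum of a value of each binary summand.

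The main obstacle will be effectivity: to make the constant $c_1$ in the Eisenstein lower bound explicit one must pin down the local densities at $2$ and $23$ and the precise normalization of $E$ inside the Eisenstein subspace of $M_2(\Gamma_0(92))$, so that $p_0$ is small enough for the residual finite verification to be feasible. Showing that the exceptional set is \emph{exactly} $\{2,5,7,23\}$, rather than merely some finite set, is the crux and rests on this computation.
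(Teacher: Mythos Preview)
Your plan is essentially the paper's own approach: interpret the representation count as the $p$-th coefficient of the theta series of $Q'$ in $M_2(\Gamma_0(92))$, split into Eisenstein plus cuspidal parts, bound the cuspidal piece by $O(\sqrt p)$ via Deligne, and clean up the small primes by direct computation. The paper executes this fully explicitly rather than conceptually: the Eisenstein contribution is identified through the $\eta$-quotient $\eta(4z)^8/\eta(2z)^4$ (which encodes $\sigma_1$ at odd arguments), the cuspidal contribution is written in terms of the two-dimensional space $S_2(\Gamma_0(23))$, and equality with $\theta_{L_1}$ is certified on the odd-index twist inside $M_2(\Gamma_0(368))$ using a Sturm-type coefficient check. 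This yields the closed formula $a_1(p)=\tfrac{4}{11}\bigl(p+1-(c_f(p)+3c_g(p))\bigr)$ and the explicit threshold $p>37$, after which only a handful of primes need inspection.

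One slip to fix: the Deligne--Ramanujan bound $|a_S(p)|\le d(p)\sqrt p=2\sqrt p$ holds only for a \emph{normalized Hecke eigenform}, not for an arbitrary element $S\in S_2(\Gamma_0(92))$. Since that space has dimension greater than one, your $S$ is a linear combination $\sum c_i g_i$ of eigenforms, and the correct estimate is $|a_S(p)|\le\bigl(\sum|c_i|\bigr)\cdot 2\sqrt p$. The paper computes this constant exactly: in the eigenbasis $h_1,h_2$ of $S_2(\Gamma_0(23))$ the relevant combination $c_f+3c_g$ has coefficients $(5\pm 7\sqrt5)/10$, giving $|c_f(p)+3c_g(p)|\le\tfrac{14}{\sqrt5}\cdot 2\sqrt p$. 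So the effectivity obstacle you flag in your final paragraph applies to the cuspidal side just as much as to $c_1$, and resolving it is precisely what the paper's explicit eigenform decomposition accomplishes.
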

\begin{proof}
Consider the following lattice in dimension $4$:
\[
L_1=\{(a, b, c, d)\in\ZZ^4
\mid a\equiv d\pmod{3} \text{ and } b\equiv c\pmod{3}\}.
\]
Here, we consider the inner product $\langle x,y \rangle_1$
induced by 
$(a^2+23b^2+c^2+23d^2)/3$, instead of the standard inner product.
This lattice is spanned by 
$(3,0,0,0)$, $(0,3,0,0)$, $(1,0,0,1)$, and $(0,1,1,0)$
with Gram matrix:
\[M_1=
\begin{pmatrix}
3&  0& 1&  0\\
0& 69& 0& 23\\
1&  0& 8&  0\\
0& 23& 0&  8
\end{pmatrix}.
\]
%Then the theta series of $L_1$
%\begin{align*}
%\theta_{L_1}(z)&=\sum_{x\in L_1}q^{\langle x,x \rangle_1}
%=1 + 4q^3 + 4q^6 + 4q^8 + 4q^9 + 8q^{11} + 12q^{12} + \cdots\\ 
%%\label{eqn:theta}\\
%&=\sum_{n=0}^{\infty}a_1(n)q^n\ (\text{say}), \nonumber
%\end{align*}
%where $q=e^{2\pi i z}$ and $z$ is in the upper half plane, 
%is a modular form for $\Gamma_0(92)$, 
%because $23M_1^{-1}$ has integer entries and 
%$\det M_1=23^2$~\cite[p.~192]{Miyake}. 
We have verified by {\sc Magma} that 
the lattice $L_1$ has the following theta series: 
\begin{align*}
\theta_{L_1}(q)&=\sum_{x\in L_1}q^{\langle x,x \rangle_1}
=1 + 4q^3 + 4q^6 + 4q^8 + 4q^9 + 8q^{11} + \cdots\\ 
%\label{eqn:theta}\\
&=\sum_{n=0}^{\infty}a_1(n)q^n\ (\text{say}). \nonumber
\end{align*}
Since $23M_1^{-1}$ has integer entries and 
$\det M_1=23^2$, 
$\theta_{L_1}(z)$ is a modular form (of weight $2$) for $\Gamma_0(92)$~\cite[Corollary~4.9.2]{Miyake}, 
where $q=e^{2\pi i z}$, $z$ is in the upper half plane, and 
\[
\Gamma_0(N)=
\left\{
\begin{pmatrix}
a&b\\
c&d
\end{pmatrix}
\in SL_2(\ZZ) \mid   c\equiv 0\pmod{N}
\right\}.
\]

It is known that the dimension of the space of cusp forms of weight $2$ 
for $\Gamma_0(23)$ is two and using {\sc Magma}
we have found some basis as follows: 
\begin{align*}
f(z)&=q - q^3 - q^4 - 2q^6 + 2q^7 - q^8 + 2q^9 + 2q^{10} - 4q^{11} +\cdots\\
&=\sum_{n=1}^{\infty}c_f(n)q^n\ (\text{say}),\\
g(z)&=q^2 - 2q^3 - q^4 + 2q^5 + q^6 + 2q^7 - 2q^8 - 2q^{10} - 2q^{11}+\cdots\\
&=\sum_{n=1}^{\infty}c_g(n)q^n\ (\text{say}). 
\end{align*}
Let 
%\begin{align*}
$\eta(z)=q^{\frac{1}{24}}\prod_{n=1}^{\infty}(1-q^n)$
%\end{align*}
be the Dedekind $\eta$-function. 
%Then $\eta(4z)^8/\eta(2z)^4$ 
%is a modular form for $\Gamma_0(4)$~\cite[p.~145, Problem~10]{Kob}. 
Then 
\[
\frac{\eta(4z)^8}{\eta(2z)^4}=\sum_{n=1}^{\infty}\sigma_1(2n-1)q^{2n-1} 
\]
is a modular form for $\Gamma_0(4)$, where 
$\sigma_1(n)=\sum_{p|n}p$ (see~\cite[p.~145, Problem~10]{Kob}). 
We define a modular form $h_{92}(z)$ for $\Gamma_0(92)$ as follows: 
\begin{multline*}
h_{92}(z)=\frac{4}{11}\Bigg{(}
\frac{\eta(4z)^8}{\eta(2z)^4}
-23\frac{\eta(92z)^8}{\eta(46z)^4}
-f(z)
+2f(2z)
-4f(4z)\\
-3g(z)
-5g(2z)
-12g(4z)\Bigg{)}
=\sum_{n=0}^{\infty}b_1(n)q^n\ (\text{say}).
\end{multline*}
%is a modular form for $\Gamma_0(92)$. 
Let 
\[
\chi_2(n)=
\left\{
\begin{array}{cl}
0 & \text{ if }n\equiv 0\pmod{2}\\
1 & \text{ otherwise. }
\end{array}
\right.
\]
Then 
\begin{align*}
(\theta_{L_1}(z))_{\chi_2}=\sum_{n=0}^{\infty}\chi_2(n)a_1(n)q^n
&=4q^3 + 4q^9 + 8q^{11} + \cdots \text{ and }\\
(h_{92}(z))_{\chi_2}=\sum_{n=0}^{\infty}\chi_2(n)b_1(n)q^n
&=4q^3 + 4q^9 + 8q^{11} + \cdots 
\end{align*}
are modular forms with character $\chi_2$
for $\Gamma_0(368)$~{\cite[p.~127, Proposition~17]{Kob}}. 
Using Theorem~7 in~\cite{Murty} and the fact that 
the genus of $\Gamma_0(368)$ is $43$,  
the verification by {\sc Magma} that $\chi_2(n)a_1(n)=\chi_2(n)b_1(n)$ for $n\leq 86$
shows 
\[
(\theta_{L_1}(z))_{\chi_2}=(h_{92}(z))_{\chi_2}. 
\]
Hence, for each odd prime $p$ with $p\ne 23$, we have
\begin{align}
a_1(p)=\frac{4}{11}(p+1-(c_f(p)+3c_g(p))). \label{eqn:coef}
\end{align}

Set $h_{1}(z)$ and $h_{2}(z)$ as follows: 
\begin{align*}
h_1(z)&=f(z)+\frac{-1+\sqrt{5}}{2}g(z)=\sum_{n=1}^{\infty}c_{h_1}(n)q^n\
 (\text{say}) \text{ and} \\
h_2(z)&=f(z)+\frac{-1-\sqrt{5}}{2}g(z)=\sum_{n=1}^{\infty}c_{h_2}(n)q^n\ (\text{say}). 
\end{align*}
Let $T(n)$ be the Hecke operator considered on the space 
of modular forms for $\Gamma_0(23)$ 
(see {\cite[p.~161, Proposition~37]{Kob}}). 
% By definition, the Hecke operator $T(n)$ $(n=1,2,3,\ldots)$ acts on 
% a modular form $f(z)$ as 
% \[
% T(n)f(z)=n\sum_{\scriptsize
% \begin{array}{c}
% ad=n,d>0\\
% 0\leq b\leq d-1
% \end{array}
% }d^{-2}f\Big{(}\frac{az+b}{d}\Big{)}. 
% \]
Then, by~{\cite[Proposition 9.15]{Knapp}}
we have 
\[
T(2)f(z)=g(z) \text{ and } T(2)g(z)=f(z)-g(z). 
\]
Namely, $h_1(z)$ and $h_2(z)$ are eigen forms for 
$T(2)$.  Since the algebra of Hecke operators is 
commutative~\cite[Theorem~4.5.3]{Miyake}, 
$h_1(z)$ and $h_2(z)$ are normalized Hecke eigen forms. 
In addition, for each prime $p$ and $i = 1,2$, 
%the absolute value of the $p$-th coefficients 
%are bounded above by $2\sqrt{p}$. 
\[
|c_{h_i}(p)|\leq 2\sqrt{p} 
\]
(see~\cite[p.~164]{Kob}).
Since
$
f(z)=
\frac{5+\sqrt{5}}{10} h_1(z) 
+\frac{5-\sqrt{5}}{10} h_2(z) 
$
and 
$
g(z)=
\frac{\sqrt{5}}{5} h_1(z) 
+\frac{-\sqrt{5}}{5} h_2(z),
$
we have
\[
f(z)+3g(z)=
\frac{5+7\sqrt{5}}{10} h_1(z) 
+\frac{5-7\sqrt{5}}{10} h_2(z).
\]
Hence, for each prime $p$, we have 
\[
|c_f(p)+3c_g(p)|\leq 
\left(\frac{5+7\sqrt{5}}{10} +\frac{-5+7\sqrt{5}}{10} \right)
2\sqrt{p}.
\]
Using (\ref{eqn:coef}), $a_1(p)$ is bounded below by 
\begin{align}
%\frac{4}{11}\left(p+1-\left(\frac{7\sqrt{5}}{5} \right)2\sqrt{p}\right)=
\frac{4}{11}\left(p+1-
\frac{14\sqrt{p}}{\sqrt{5}} \right).
\label{eqn:bound}
\end{align}
Hence, (\ref{eqn:bound}) is positive for $p > 37$, 
namely, $a_1(p)>0$ for $p > 37$. 
% We have verified by {\sc Magma} that
% $a_1(p)$ ($p=3, 11, 13, 17, 19, 29, 31, 37$) are positive,
% where $a_1(n)$ is listed in Table~\ref{Tab:an} for
% $n=1,2,\ldots,40$.
We have verified by {\sc Magma} that
$a_1(p)>0$ for each prime $p$ with $p \le 37$ and 
$p \ne 2,5,7, 23$,
where $a_1(p)$ is listed in Table~\ref{Tab:an} for
a prime $p \le 37$.
\end{proof}

%%%%%%%%%%%%%%%%%%%%%%%%%%%
\begin{table}[thb]
\caption{Coefficients $a_1(p)$ for primes $p \le 37$}
\label{Tab:an}
\begin{center}
{\small
%{\footnotesize
%{\scriptsize
\begin{tabular}{cc|cc|cc|cc}
\noalign{\hrule height0.8pt}
$p$ & $a_1(p)$ & $p$ & $a_1(p)$ & 
$p$ & $a_1(p)$ & $p$ & $a_1(p)$ \\
\hline
$ 2$ & $ 0$&$ 7$ & $ 0$&$17 $& $ 8$&$29 $& $12$\\
$ 3$ & $ 4$&$11 $& $ 8$&$19 $& $ 8$&$31 $& $ 4$\\
$ 5$ & $ 0$&$13 $& $ 4$&$23 $& $ 0$&$37 $& $16$\\
\noalign{\hrule height0.8pt}
\end{tabular}
}
\end{center}
\end{table}
%%%%%%%%%%%%%%%%%%%%%%%%%%%%%%%%%%%%%%%%%%%%%%%%

\begin{thm}\label{thm:prime29}  
There are integers $a,b,c$ and $d$ satisfying
$a \equiv d \pmod 5$,
$b \equiv c \pmod 5$ and 
$p=\frac{1}{5}(a^2+29b^2+c^2+29d^2)$ for each prime $p \ne 2, 3, 7, 17, 23$.
\end{thm}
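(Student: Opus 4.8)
The plan is to mirror the proof of Theorem~\ref{thm:prime23} almost verbatim, replacing the modulus $3$ by $5$ and the form $(a^2+23b^2+c^2+23d^2)/3$ by $(a^2+29b^2+c^2+29d^2)/5$, and the arithmetic of $\sqrt{5}$ (which entered through the cusp-form space of level $23$) by whatever real quadratic field governs the level-$29$ situation. First I would introduce the lattice
\[
L_2=\{(a,b,c,d)\in\ZZ^4 \mid a\equiv d\pmod 5 \text{ and } b\equiv c\pmod 5\},
\]
equipped with the inner product induced by $(a^2+29b^2+c^2+29d^2)/5$, and compute its Gram matrix $M_2$ with respect to a convenient basis such as $(5,0,0,0)$, $(0,5,0,0)$, $(1,0,0,1)$, $(0,1,1,0)$. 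I expect $\det M_2 = 29^2$ and $29 M_2^{-1}$ to have integer entries, so by \cite[Corollary~4.9.2]{Miyake} the theta series $\theta_{L_2}(z)=\sum_n a_2(n)q^n$ is a weight-$2$ modular form for $\Gamma_0(4\cdot 29)=\Gamma_0(116)$. The statement $p=\tfrac{1}{5}(a^2+29b^2+c^2+29d^2)$ is solvable exactly when $a_2(p)>0$, so the whole theorem reduces to showing $a_2(p)>0$ for every prime $p\notin\{2,3,7,17,23\}$.

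Next I would analyze the space $S_2(\Gamma_0(29))$ of cusp forms, which I expect to be two-dimensional (so that, as in the level-$23$ case, a single pair of Galois-conjugate newforms $h_1,h_2$ governs everything). Using \textsc{Magma} I would produce a rational basis $f,g$ of this space, express $\theta_{L_2}$ as an Eisenstein part plus a linear combination $c_f f + c_g g$, and construct the explicit Eisenstein-type modular form $h_{116}(z)$ for $\Gamma_0(116)$ built from $\eta$-quotients (the analog of $h_{92}$), so that after twisting by the nontrivial character $\chi_2$ modulo $2$ the difference $\theta_{L_2}-h_{116}$ becomes a cusp form. I would then invoke Sturm's bound via Theorem~7 of \cite{Murty} together with the genus of $\Gamma_0(4\cdot 116)=\Gamma_0(464)$ to reduce the identity $(\theta_{L_2})_{\chi_2}=(h_{116})_{\chi_2}$ to a finite \textsc{Magma} verification of Fourier coefficients; this yields a clean formula expressing $a_2(p)$ as $\tfrac{4}{c}\bigl(p+1-(c_f(p)+\lambda c_g(p))\bigr)$ for explicit constants $c,\lambda$.

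The asymptotic positivity then follows the same route: diagonalize $T(2)$ on $S_2(\Gamma_0(29))$ to obtain normalized Hecke eigenforms $h_1,h_2$, whose eigenvalues lie in the real quadratic field attached to the level-$29$ newform (conjecturally $\QQ(\sqrt{29})$ or a related field — I would read off the precise field from the characteristic polynomial of $T(2)$). By commutativity of the Hecke algebra \cite[Theorem~4.5.3]{Miyake} these are simultaneous eigenforms, so the Ramanujan bound $|c_{h_i}(p)|\le 2\sqrt{p}$ applies, giving a lower bound of the shape $\tfrac{4}{c}\bigl(p+1-\kappa\sqrt{p}\bigr)$ for an explicit $\kappa$. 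This is positive for all $p$ beyond some threshold, and the finitely many remaining primes are checked directly by \textsc{Budget}---by \textsc{Magma}. The main obstacle is likely the bookkeeping in step two: pinning down the exact coefficients of the $\eta$-quotient combination $h_{116}$ so that it reproduces $\theta_{L_2}$ after the $\chi_2$-twist, since the larger level $29$ (with its possibly irrational Hecke eigenvalues) makes the explicit linear combination more delicate than in the $\sqrt{5}$ case, and one must confirm that the excluded primes $\{2,3,7,17,23\}$ are exactly those where $a_2(p)=0$ rather than artifacts of a miscalibrated Eisenstein term.
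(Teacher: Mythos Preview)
Your overall strategy is exactly the paper's, but one structural assumption fails and would block the computation as you describe it. You expect the cuspidal correction to $\theta_{L_2}$ to come from the two-dimensional space $S_2(\Gamma_0(29))$, lifted to level $116$ via $f(z),f(2z),f(4z),g(z),g(2z),g(4z)$ just as in the level-$23$ argument. In fact this does not work: the paper has to operate directly in the thirteen-dimensional space $S_2(\Gamma_0(116))$, building $h_{116}$ from seven basis forms $f_1,f_3,f_5,f_7,f_9,f_{11},f_{13}$ chosen there, and diagonalising $T(3)$ rather than $T(2)$ (since $2$ now divides the level). The four Hecke eigenforms $\hat h_1,\hat h_3,\hat h_5,\hat h_9$ that survive in the final expression for $a_2(p)$ all have \emph{rational} $T(3)$-eigenvalues, whereas the two level-$29$ newforms have Hecke field generated by $\sqrt{2}$ (not $\sqrt{29}$ as you guessed). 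So the cusp forms carrying the correction are genuinely new at levels $58$ or $116$, and no combination of level-$29$ oldforms can absorb them; your proposed ansatz would leave a nonzero residual.

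Once you enlarge the search space to all of $S_2(\Gamma_0(116))$, everything else you wrote goes through verbatim: the $\eta$-quotient Eisenstein piece, the $\chi_2$-twist, the Sturm-type verification on $\Gamma_0(464)$ (genus $55$, so coefficients up to $n\le 110$ suffice), and the Deligne bound, which here gives $a_2(p)\ge\frac{4}{15}(p+1-10\sqrt{p})>0$ for $p>97$, leaving a short list of small primes to check by hand.
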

\begin{proof}
We follow the same line as in the previous proof. 
Consider the following lattice in dimension $4$:
\[
L_{2}=\{(a, b, c, d)\in\ZZ^4
\mid a\equiv d\pmod{5} \text{ and } b\equiv c\pmod{5}\}.
\]
Here, we consider the inner product $\langle x,y \rangle_2$
induced by 
$(a^2+29b^2+c^2+29d^2)/5$.
This lattice is spanned by 
$(5,0,0,0)$, $(0,5,0,0)$, $(1,0,0,1)$, and $(0,1,1,0)$
with Gram matrix:
\[M_{2}=
\begin{pmatrix}
5&  0& 1&  0\\
0& 145& 0& 29\\
1&  0& 6&  0\\
0& 29& 0&  6
\end{pmatrix}.
\]
We have verified by {\sc Magma} that 
$L_{2}$  has the following theta series: 
\begin{align*}
\theta_{L_2}(q)&=\sum_{x\in L_{2}}q^{\langle x,x \rangle_2}
=1 + 4q^5 + 4q^6 + 4q^9 + 4q^{10} + 8q^{11} + \cdots \\
% \label{eqn:theta}\\
&=\sum_{n=0}^{\infty}a_{2}(n)q^n\ (\text{say}). \nonumber
\end{align*}
Since $29M_{2}^{-1}$ has integer entries and 
$\det M_{2}=29^2$, $\theta_{L_2}(z)$
is a modular form for $\Gamma_0(116)$~{\cite[p.~192]{Miyake}}.

It is known that the dimension of the space of cusp forms of weight $2$ 
for $\Gamma_0(116)$ is thirteen and using {\sc Magma} 
we have found some basis $f_1(z),\ldots, f_{13}(z)$ such that 
% \[
% f_i(z)=c_iq^i+c_{i, 12}q^{12}+c_{i, 13}q^{13}+\cdots, 
% \]
% where $c_i\neq 0$ and $i=1,2,\ldots, 13$. 
\[
f_i(z)=q^i+c_{i, 12}q^{12}+c_{i, 13}q^{13}+\cdots, 
\]
for $i=1,2,\ldots,11$, and
$f_i(z)=c_{i}q^{i}+\cdots$, 
where $c_{i} \neq 0$ for $i=12,13$. 
In particular, we use $f_i(z)\ (i=1,3,5,7,9,11,13)$, 
which are explicitly written as: 
\begin{align*}
f_1(z)&= q + 5q^{13} + 3q^{15} - 5q^{17} - 6q^{19} + 3q^{21} - 4q^{23} + 
3q^{25} - 6q^{27} -\cdots, \\
%&=\sum_{n=1}^{\infty}c_{f_{1}}(n)q^n\ (\mbox{say}),\\
%f_2(z)&=  q^2 + q^{12} - 5q^{14} + 6q^{16} + 3q^{18} + 2q^{20} - 4q^{22} 
%- 6q^{24} - 3q^{26} -\cdots,\\
%&=\sum_{n=1}^{\infty}c_{f_2}(n)q^n\ (\mbox{say}), \\
f_3(z)&=  q^3 + 10q^{13} + 7q^{15} - 10q^{17} - 17q^{19} + 10q^{21} - 4q^{23} 
+ 10q^{25} -   12q^{27} - \cdots,\\
%&=\sum_{n=1}^{\infty}c_{f_3}(n)q^n\ (\mbox{say}), \\
%f_4(z)&=  q^4 - q^{16} - q^{20} - q^{24} + 2q^{28} + \cdots,\\
%&=\sum_{n=1}^{\infty}c_{f_3}(n)q^n\ (\mbox{say}), \\
f_5(z)&= q^5 + 8q^{13} + 5q^{15} - 7q^{17} - 14q^{19} + 7q^{21} - 4q^{23} 
+ 9q^{25} - 10q^{27} - \cdots,\\
%&=\sum_{n=1}^{\infty}c_{f_4}(n)q^n\ (\mbox{say}), \\
%f_6(z)&=  q^6 - 2q^{14} + 2q^{16} + 2q^{18} - q^{22} - 2q^{24} - 2q^{26} +\cdo%ts,\\
%&=\sum_{n=1}^{\infty}c_{f_5}(n)q^n\ (\mbox{say}), \\
f_7(z)&=  q^7 + 5q^{13} + 3q^{15} - 5q^{17} - 8q^{19} + 5q^{21} - 3q^{23} 
+ 5q^{25} - 6q^{27}  -\cdots,\\
%&=\sum_{n=1}^{\infty}c_{f_6}(n)q^n\ (\mbox{say}), \\
%f_8(z)&=  q^8 + q^{12} - 4q^{14} + 3q^{16} + 4q^{18} + q^{20} - 3q^{22} 
%- 4q^{24} - 2q^{26} -\cdots,\\
%&=\sum_{n=1}^{\infty}c_{f_7}(n)q^n\ (\mbox{say}), \\
f_9(z)&=  q^9 + 9q^{13} + 7q^{15} - 9q^{17} - 16q^{19} + 9q^{21} - 4q^{23} 
+ 10q^{25} -12q^{27} - \cdots,\\
%&=\sum_{n=1}^{\infty}c_{f_9}(n)q^n\ (\mbox{say}), \\
%f_{10}(z)&=  q^{10} + q^{12} - 3q^{14} + 5q^{16} + q^{18} + q^{20} - 3q^{22} 
%- 4q^{24} - 2q^{26} -5q^{28} +\cdots,\\
%&=\sum_{n=1}^{\infty}c_{f_{10}}(n)q^n\ (\mbox{say}), \\
f_{11}(z)&=  q^{11} + 6q^{13} + 5q^{15} - 7q^{17} - 9q^{19} + 5q^{21} - 4q^{23} + 6q^{25} - 7q^{27} - \cdots,\\
%&=\sum_{n=1}^{\infty}c_{f_{11}}(n)q^n\ (\mbox{say}), \\
%f_{12}(z)&=  2q^{12} - 4q^{14} + 5q^{16} + 4q^{18} + q^{20} - 3q^{22} - 6q^{24%} - 2q^{26} - \cdots,\\
%&=\sum_{n=1}^{\infty}c_{f_{12}}(n)q^n\ (\mbox{say}), \\
f_{13}(z)&=  14q^{13} + 11q^{15} - 13q^{17} - 24q^{19} + 15q^{21} - 6q^{23} 
+ 14q^{25} - 16q^{27} - \cdots. 
%&=\sum_{n=1}^{\infty}c_{f_{13}}(n)q^n\ (\mbox{say}). 
\end{align*}
For $i=1,3,5,7,9,11,13$, we denote by $c_{f_{i}}(n)$ 
the coefficient of $f_{i}(z)$ as follows: 
\[
f_{i}(z)=\sum_{n=1}^{\infty}c_{f_{i}}(n)q^n.
\]

We define a modular form $h_{116}(z)$ for $\Gamma_0(116)$ as follows: 
\begin{multline*}
h_{116}(z)=\frac{4}{15}\Bigg{(}
\frac{\eta(4z)^8}{\eta(2z)^4}
+29\frac{\eta(116z)^8}{\eta(58z)^4}
-f_1(z)
-4f_3(z)
+9f_5(z)\\
-8f_7(z)
+2f_9(z)
+18f_{11}(z)
-8f_{13}(z)
\Bigg{)}
=\sum_{n=0}^{\infty}b_{2}(n)q^n\ (\text{say}).
\end{multline*}
%is a modular form for $\Gamma_0(92)$. 
Then 
\begin{align*}
%\label{eqn:theta} 
(\theta_{L_2}(z))_{\chi_2}&=\sum_{n=0}^{\infty}\chi_2(n)a_2(n)q^n
= 4q^5  + 4q^9  + 8q^{11} + \cdots 
\text{ and} \\
%\label{eqn:theta} 
(h_{116}(z))_{\chi_2}&=\sum_{n=0}^{\infty}\chi_2(n)b_2(n)q^n
= 4q^5  + 4q^9  + 8q^{11} + \cdots 
\end{align*}
are modular forms with character $\chi_2$
for $\Gamma_0(464)$~{\cite[p.~127, Proposition~17]{Kob}}.
Using Theorem~7 in~\cite{Murty} and the fact that 
the genus of $\Gamma_0(464)$ is $55$, 
the verification by {\sc Magma} that 
$\chi_2(n)a_2(n)=\chi_2(n)b_2(n)$ for $n\leq 110$ 
shows
\[
(\theta_{L_2}(z))_{\chi_2}=(h_{116}(z))_{\chi_2}. 
\]
Hence, for each odd prime $p$ with $p\ne 29$, we have
\begin{multline}\label{eqn:coef56}
a_2(p)=\frac{4}{15}(p+1-(c_{f_1}(p)+4c_{f_3}(p)-9c_{f_5}(p) \\
+8c_{f_7}(p)-2c_{f_9}(p)-18c_{f_{11}}(p)+8c_{f_{13}}(p))). 
\end{multline}
Set $\hat{h}_i(z)\ (i=1,3,5,7,9,11,13)$ as follows: 
\begin{align}\label{mat:Hecke}
\begin{pmatrix}
\hat{h}_1(z)\\
\hat{h}_3(z)\\
\hat{h}_5(z)\\
\hat{h}_7(z)\\
\hat{h}_9(z)\\
\hat{h}_{11}(z)\\
\hat{h}_{13}(z)
\end{pmatrix}
&=
\begin{pmatrix}
 1 & -3 & 3 & 4 & 6 & -1 & -5 \\
 1 & 2 & -2 & 4 & 1 & -6 & 0 \\
 1 & 1 & 3 & -4 & -2 & 3 & -1 \\
 1 & -3 & -3 & -2 & 6 & -1 & 1 \\
 1 & -1 & 1 & -2 & -2 & -3 & 3 \\
 1 & 1+\sqrt{2} & -1 & -2 \sqrt{2} & 2 \sqrt{2} & 1-\sqrt{2} & -1-\sqrt{2} \\
 1 & 1-\sqrt{2} & -1 & 2 \sqrt{2} & -2 \sqrt{2} & 1+\sqrt{2} & -1+\sqrt{2}
\end{pmatrix}
\begin{pmatrix}
f_1(z)\\
f_3(z)\\
f_5(z)\\
f_7(z)\\
f_9(z)\\
f_{11}(z)\\
f_{13}(z)
\end{pmatrix}.
%\begin{pmatrix}
%h_1(z)\\
%h_3(z)\\
%h_5(z)\\
%h_7(z)\\
%h_9(z)\\
%h_{11}(z)\\
%h_{13}(z)
%\end{pmatrix}
%&=
%\begin{pmatrix}
%\sum_{n=1}^{\infty}c_{h_1}(n)q^n\\
%\sum_{n=1}^{\infty}c_{h_3}(n)q^n\\
%\sum_{n=1}^{\infty}c_{h_5}(n)q^n\\
%\sum_{n=1}^{\infty}c_{h_7}(n)q^n\\
%\sum_{n=1}^{\infty}c_{h_9}(n)q^n\\
%\sum_{n=1}^{\infty}c_{h_{11}}(n)q^n\\
%\sum_{n=1}^{\infty}c_{h_{13}}(n)q^n
%\end{pmatrix}
%\nonumber
\end{align}
For $i=1,3,5,7,9,11,13$, we denote by $c_{\hat{h}_{i}}(n)$ 
the coefficient of $\hat{h}_{i}(z)$ as follows: 
\[
\hat{h}_{i}(z)=\sum_{n=1}^{\infty}c_{\hat{h}_{i}}(n)q^n.
\]
%Let $T(n)$ be Hecke operators. 
Let $T(n)$ be the Hecke operator considered on the space 
of modular forms for $\Gamma_0(116)$ 
(see {\cite[p.~161, Proposition~37]{Kob}}). 
Then, by~{\cite[Proposition 9.15]{Knapp}}
we have 
\begin{align*}
T(3)f_1(z)&=3f_3(z)+3f_5(z)+3f_7(z)-6f_9(z)-2f_{11}(z), \\
T(3)f_3(z)&=f_1(z)+7f_5(z)+10f_7(z)-9f_9(z)-6f_{11}(z)+f_{13}(z), \\
T(3)f_5(z)&=5f_5(z)+7f_7(z)-10f_9(z)-3f_{11}(z)+3f_{13}(z), \\
T(3)f_7(z)&=3f_5(z)+5f_7(z)-6f_9(z)-3f_{11}(z)+2f_{13}(z), \\
T(3)f_9(z)&=f_3(z)+7f_5(z)+9f_7(z)-12f_9(z)-4f_{11}(z)+2f_{13}(z), \\
T(3)f_{11}(z)&=5f_5(z)+5f_7(z)-7f_9(z)-2f_{11}(z)+f_{13}(z), \\
T(3)f_{13}(z)&=11f_5(z)+15f_7(z)-16f_9(z)-6f_{11}(z)+2f_{13}(z).
\end{align*}
% where $T(n)$ denotes the Hecke operator.
Namely, $\hat{h}_i(z)\ (i=1,3,5,7,9,11,13)$ are eigen forms for 
$T(3)$.  Since the algebra of Hecke operators is 
commutative~\cite[Theorem~4.5.3]{Miyake}, 
$\hat{h}_i(z)\ (i=1,3,5,7,9,11,13)$ are normalized Hecke eigen forms. 
In addition, for each prime $p$ and $i=1,3,5,7,9,11,13$, 
%the absolute value of the $p$-th coefficients 
%are bounded above by $2\sqrt{p}$. 
\[
|c_{\hat{h}_i}(p)|\leq 2\sqrt{p}. 
\]
By (\ref{mat:Hecke}), we have 
\begin{align*}
\begin{pmatrix}
f_1(z)\\
f_3(z)\\
f_5(z)\\
f_7(z)\\
f_9(z)\\
f_{11}(z)\\
f_{13}(z)
\end{pmatrix}
=
\begin{pmatrix}
 0 & \frac{1}{3} & \frac{1}{2} & \frac{1}{4} & -\frac{1}{12} 
&-\frac{\sqrt{2}}{8}   & \frac{\sqrt{2}}{8}   \\
 -\frac{5}{24} & \frac{2}{3} & \frac{7}{8} & \frac{1}{3} & -\frac{2}{3} 
&\frac{-\sqrt{2}-4}{8} & \frac{\sqrt{2}-4}{8} \\
 -\frac{1}{20} & \frac{7}{15} & \frac{3}{4} & \frac{1}{4} & -\frac{5}{12} 
&\frac{-\sqrt{2}-4}{8} & \frac{\sqrt{2}-4}{8} \\
 -\frac{1}{24} & \frac{1}{3} & \frac{3}{8} & \frac{1}{6} & -\frac{1}{3} 
&\frac{-\sqrt{2}-2}{8} & \frac{\sqrt{2}-2}{8} \\
 -\frac{7}{60} & \frac{8}{15} & \frac{3}{4} & \frac{5}{12} & -\frac{7}{12} 
&\frac{-\sqrt{2}-4}{8} & \frac{\sqrt{2}-4}{8} \\
 -\frac{1}{10} & \frac{4}{15} & \frac{1}{2} & \frac{1}{4} & -\frac{5}{12} 
&\frac{-\sqrt{2}-2}{8} & \frac{\sqrt{2}-2}{8} \\
 -\frac{31}{120} & \frac{4}{5} & \frac{9}{8} & \frac{7}{12} & -\frac{3}{4} 
&\frac{-\sqrt{2}-3}{4} & \frac{\sqrt{2}-3}{4} \\
\end{pmatrix}
\begin{pmatrix}
\hat{h}_1(z)\\
\hat{h}_3(z)\\
\hat{h}_5(z)\\
\hat{h}_7(z)\\
\hat{h}_9(z)\\
\hat{h}_{11}(z)\\
\hat{h}_{13}(z)
\end{pmatrix}.
\end{align*}
Hence, we have
%\begin{multline*}
\begin{align*}
&{f_1}(z)+4{f_3}(z)-9{f_5}(z) 
+8{f_7}(z)-2{f_9}(z)-18{f_{11}}(z)+8{f_{13}}(z) 
\\
&=
-\frac{3}{4} \hat{h}_1(z)
+   2 \hat{h}_3(z)
-\frac{5}{4} \hat{h}_5(z)
+     \hat{h}_9(z).
\end{align*}
%\end{multline*}
For each prime $p$, 
$|c_{f_1}(p)+4c_{f_3}(p)-9c_{f_5}(p)+8c_{f_7}(p)
-2c_{f_9}(p)-18c_{f_{11}}(p)+8c_{f_{13}}(p)|$ 
is bounded above by 
\[
\left(\frac{3}{4}+2+\frac{5}{4}+1 \right) 2\sqrt{p}
=10\sqrt{p}.
\]
Using (\ref{eqn:coef56}), $a_2(p)$ is bounded below by 
\begin{align}
\frac{4}{15}\left(p+1-10\sqrt{p}\right). \label{eqn:bound56}
\end{align}
Hence, (\ref{eqn:bound56}) is positive for $p >97$, 
namely, $a_2(p)>0$ for $p >97$. 
% We can directly see from the theta series of $L$ that $a(29)$, $a(31)$, $a(34)$, % and $a(37)$ are positive. 
We have verified by {\sc Magma} that
%for each prime $p$ with $p<78919$ except for $p\in \{2,3,7,17,23\}$, 
$a_2(p)>0$
for each prime $p$ with $p \le 97$ and $p \ne 2,3,7,17,23$,
where $a_2(p)$ is listed in Table~\ref{Tab:an2} for
a prime $p \le 97$.
\end{proof}

%%%%%%%%%%%%%%%%%%%%%%%%%%%
\begin{table}[thb]
\caption{Coefficients $a_2(p)$ for primes $p \le 97$}
\label{Tab:an2}
\begin{center}
{\small
%{\footnotesize
%{\scriptsize
\begin{tabular}{cc|cc|cc|cc|cc}
\noalign{\hrule height0.8pt}
$p$ & $a_2(p)$ & $p$ & $a_2(p)$ &  $p$ & $a_2(p)$ & 
$p$ & $a_2(p)$ & $p$ & $a_2(p)$ \\
\hline
  2&  0& 13&  4& 31& 16& 53& 12& 73&  8\\
  3&  0& 17&  0& 37&  8& 59& 16& 79& 24\\
  5&  4& 19&  8& 41&  8& 61& 16& 83& 16\\
  7&  0& 23&  0& 43&  8& 67& 32& 89& 24\\
 11&  8& 29& 16& 47&  8& 71& 16& 97& 24\\
\noalign{\hrule height0.8pt}
\end{tabular}
}
\end{center}
\end{table}
%%%%%%%%%%%%%%%%%%%%%%%%%%%%%%%%%%%%%%%%%%%%%%%%

%%%%%%%%%%%%%%%%%%%%%%%%%%%%%%%%%
\section{Lengths 32 and 40}\label{sec:3240}

In this section, we show the existence of an extremal
Type~II $\ZZ_{2k}$-code of lengths $32$ and $40$
for every positive integer $k$.
Our approach is similar to that in~\cite{Chapman, GH01}.

Some weighing matrix of order $16$
and weight $11$ is given in~\cite[p.~280]{Z4-H}
and it is denoted by $W_{16,11}$. 
Let $W_{20,11}$ be the $20 \times 20$ $(0,\pm1)$-matrix
$
\left(
\begin{array}{cc}
A & B \\
-B^T & A^T
\end{array}
\right),
$
where
$A$ and $B$ are negacirculant matrices with
first rows $r_A$ and $r_B$:
\[
r_A=(0,  1,  0,  1,  0,  1,  0,  1,  0,  1) \text{ and }
r_B=(1,  1, -1, -1,  1,  0,  0,  1,  0,  0),
\]
respectively.
Then $W_{n,11}$ ($n=16,20$) is a skew-symmetric
weighing matrix of order $n$ and weight $11$.
By Proposition~\ref{prop:constZ4}, $\tilde{C}_4(W_{n,11})$ 
($n=16,20$) is a Type~II $\ZZ_4$-code.
Moreover, 
$\tilde{C}_4(W_{16,11})$ is extremal~\cite{Z4-H}, and 
we have verified by {\sc Magma} that 
$\tilde{C}_4(W_{20,11})$ is extremal.
Hence, $A_4(\tilde{C}_4(W_{n,11}))$ is an extremal even unimodular
lattice in dimension $2n$ for $n=16,20$.

% Let $a,b,c$ and $d$ be integers with 
% $c \equiv 2a+b \pmod 4$ and
% $d \equiv a+2b \pmod 4$.
% % $a^2+11 b^2+c^2+11 d^2 \equiv 0 \pmod4$.
If $a,b,c$ and $d$ are integers with 
$c \equiv 2a+b \pmod 4$ and $d \equiv a+2b \pmod 4$,
then the extremal even unimodular lattice
$A_4(\tilde{C}_4(W_{n,11}))$ ($n=16,20$) contains a 
$\frac{1}{4}(a^2+11 b^2+c^2+11 d^2)$-frame by Proposition~\ref{prop:constZ4}.
%%%%
Moreover, 
by Lemma~\ref{lem:frame} and Theorem~\ref{thm:prime11},
we have the following:

\begin{lem}\label{lem:3240-1}
$A_4(\tilde{C}_4(W_{n,11}))$
$(n=16,20)$
contains a $2k$-frame for every positive integer $k$
with $k \ge 2$ and $k \ne 11^m$, where $m$ is a positive integer.
\end{lem}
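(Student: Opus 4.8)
The plan is to combine the frame-multiplication principle (Lemma~\ref{lem:frame}) with the representation result for the quaternary quadratic form (Theorem~\ref{thm:prime11}), exactly as the paragraph immediately preceding the statement suggests. The target is to produce, for each admissible $k$, a $2k$-frame inside the fixed extremal even unimodular lattice $A_4(\tilde{C}_4(W_{n,11}))$ for $n=16,20$. The key observation is that Proposition~\ref{prop:constZ4} already hands us a recipe: any integers $a,b,c,d$ satisfying the congruences $c\equiv 2a+b\pmod4$ and $d\equiv a+2b\pmod4$ yield a $\frac{1}{4}(a^2+11b^2+c^2+11d^2)$-frame of this lattice. So the whole problem reduces to deciding which frame sizes are realizable.

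The first step is to dispose of the prime case. For an odd prime $p\ne 11$, Theorem~\ref{thm:prime11} supplies integers $a,b,c,d$ meeting the two congruences with $2p=\frac14(a^2+11b^2+c^2+11d^2)$; feeding these into Proposition~\ref{prop:constZ4} produces a $2p$-frame of $A_4(\tilde{C}_4(W_{n,11}))$. The second step is to upgrade from primes to arbitrary $k$ using Lemma~\ref{lem:frame}: since the dimension $2n\in\{32,40\}$ is divisible by $4$, a $2p$-frame immediately yields a $2pm$-frame for every positive integer $m$. Thus every integer of the form $2pm$ with $p$ an odd prime different from $11$ is realizable as a frame size, which already covers every even integer $2k$ for which $k$ has at least one prime divisor other than $11$.

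The remaining obstacle — and the step I would treat most carefully — is the boundary of these two steps: the small or degenerate values of $k$ not yet covered, namely $k=2$ and the case where $k$ is a power of $11$. For $k=2$ (a $4$-frame) one should note that $A_4$ of a self-dual $\ZZ_4$-code always carries a natural $4$-frame coming from the standard coordinate structure of Construction~A, so $k=2$ is handled directly without invoking the prime representation; alternatively one exhibits explicit $a,b,c,d$ giving frame size $4$. This is exactly why the statement requires $k\ge 2$ and excludes $k=11^m$: the prime $p=11$ is the one value Theorem~\ref{thm:prime11} cannot deliver, so frame sizes that are pure powers of $11$ fall outside the reach of the argument and must be excised from the conclusion. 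I would check that every $k\ge 2$ with $k\ne 11^m$ indeed has a prime factor $p\ne 11$, write $k=p\cdot(k/p)$, obtain a $2p$-frame from Theorem~\ref{thm:prime11}, and multiply by $m=k/p$ via Lemma~\ref{lem:frame}; this closes the argument for both lengths simultaneously since the two lattices are treated identically.

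The main difficulty is therefore not analytic but purely bookkeeping: confirming that the exclusion ``$k\ge 2$ and $k\ne 11^m$'' is precisely the set on which the prime-times-multiplier decomposition succeeds, and ensuring the $k=2$ case is anchored independently. No further number-theoretic input beyond Theorem~\ref{thm:prime11} is needed, because the multiplicativity supplied by Lemma~\ref{lem:frame} does all the work of passing from primes to composite $k$.
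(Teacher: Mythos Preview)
Your approach is the paper's own: Proposition~\ref{prop:constZ4} supplies $\frac14(a^2+11b^2+c^2+11d^2)$-frames, Theorem~\ref{thm:prime11} realises $2p$ for each odd prime $p\ne 11$, and Lemma~\ref{lem:frame} multiplies. The paper records exactly this in the sentence preceding the lemma.

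There is one bookkeeping slip to repair. In your closing summary you write that every $k\ge 2$ with $k\ne 11^m$ has a prime factor $p\ne 11$, and that one then ``obtain[s] a $2p$-frame from Theorem~\ref{thm:prime11}.'' But Theorem~\ref{thm:prime11} is stated only for \emph{odd} primes, so when $k=2^a$ this step fails as written; likewise your earlier claim that the $2pm$ values ``cover every even integer $2k$ for which $k$ has at least one prime divisor other than $11$'' is false for $k=4,8,\ldots$. You have already supplied the fix a few lines above --- the natural $4$-frame of $A_4(\tilde{C}_4(W_{n,11}))$ (or take $(a,b,c,d)=(0,0,4,0)$ in Proposition~\ref{prop:constZ4}) --- so the correct case split is: for even $k$, start from the $4$-frame and apply Lemma~\ref{lem:frame} with multiplier $k/2$; for odd $k\ne 11^m$, pick an odd prime divisor $p\ne 11$ of $k$ and proceed via Theorem~\ref{thm:prime11}. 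With that adjustment the argument is complete and matches the paper.
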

%%%%%%%%%%%%%%%%%%%%%%%%%%%%%%%%%%%%%%%%%%%%%%%%
\begin{table}[thb]
\caption{Extremal Type~II $\ZZ_{22}$-codes of lengths $32,40$}
\label{Tab:3240}
\begin{center}
%{\small
{\footnotesize
%{\scriptsize
\begin{tabular}{c|l|l}
\noalign{\hrule height0.8pt}
Code & \multicolumn{1}{c|}{$r_A$} & \multicolumn{1}{c}{$r_B$} \\
\hline
$C_{22,32}$ &
$( 0,  0,  0,  0,  1, 10, 21,  6)$& $( 11,  2, 20, 20,  9,  3, 21, 11)$\\
%% $(11, 0,11, 0,12,16,12,14)$&$(9,17,13, 9, 5, 2, 2,16)$ \\
$C_{22,40}$ &
$( 0,  0,  0,  0,  1, 19,  2, 10,  1,  6)$&
$( 13, 1, 10, 14, 10, 16, 13,  6,  9,  4)$ \\
%$(11, 11,  0,  0, 12, 20,  4, 10, 16,  4)$&
%$(19, 13,  5,  4,  7,  1,  4,  6,  0,  0)$ \\
\noalign{\hrule height0.8pt}
\end{tabular}
}
\end{center}
\end{table}
%%%%%%%%%%%%%%%%%%%%%%%%%%%%%%%%%%%%%%%%%%%%%%%%

Let $C_{22,2n}$  ($n=16,20$) be the $\ZZ_{22}$-code of length $2n$
with generator matrix of the form (\ref{eq:GM}),
where the first rows $r_A$ and $r_B$ of negacirculant matrices
$A$ and $B$ are listed in Table~\ref{Tab:3240}.
These codes were found by considering pairs of binary 
Type~II codes and self-dual $\ZZ_{11}$-codes with 
generator matrices of the form (\ref{eq:GM})
(see~\cite[Theorem~2.3]{DHS} for the construction method).
Since $AA^T+BB^T=-I$ and the Euclidean weights of 
all rows of the generator matrix are divisible by $44$,
it follows from~\cite[Lemma~2.2]{BDHO} that $C_{22,2n}$ is 
a Type~II $\ZZ_{22}$-code of length $2n$ ($n=16,20$).
Moreover, we have verified by {\sc Magma} that $C_{22,2n}$ is extremal.
Hence, we have the following:

\begin{lem}\label{lem:3240-2}
$A_{22}(C_{22,2n})$ $(n=16,20)$ contains 
a $22k$-frame for every positive integer $k$.
\end{lem}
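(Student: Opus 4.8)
The goal is to show that the extremal even unimodular lattice $A_{22}(C_{22,2n})$ (for $n=16,20$) contains a $22k$-frame for \emph{every} positive integer $k$. The plan is to combine the frame-construction machinery of Proposition~\ref{prop:constZ4} with Lemma~\ref{lem:frame}, and then patch the one residual case that Theorem~\ref{thm:prime11} leaves uncovered. First I would recall that, by the discussion preceding Lemma~\ref{lem:3240-1} together with Theorem~\ref{thm:prime11}, the lattice $A_4(\tilde{C}_4(W_{n,11}))$ already contains a $2k$-frame for every positive integer $k$ with $k \ge 2$ and $k \ne 11^m$. Since $A_{22}(C_{22,2n})$ and $A_4(\tilde{C}_4(W_{n,11}))$ are both extremal even unimodular lattices in dimension $2n$, and in each of the dimensions $32$ and $40$ the extremal even unimodular lattice is \emph{unique}, these two lattices are isomorphic; hence $A_{22}(C_{22,2n})$ inherits a $2k$-frame for the same range of $k$.

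Next I would address the multiplicative reduction. By Lemma~\ref{lem:frame}, since the dimension $2n \in \{32,40\}$ is divisible by $4$, the existence of a single $2$-frame in $A_{22}(C_{22,2n})$ would already yield a $2k$-frame for every positive integer $k$ of the form $k = m$ times that base frame. More usefully, the strategy is to show a $22$-frame exists directly and then multiply: the code $C_{22,2n}$ is self-dual over $\ZZ_{22}$, so by the correspondence between self-dual $\ZZ_{22}$-codes and $22$-frames (recalled in Section~\ref{sec:2U}), the lattice $A_{22}(C_{22,2n})$ contains a $22$-frame by construction. Applying Lemma~\ref{lem:frame} to this $22$-frame then immediately produces a $22k$-frame for every positive integer $k$, which is exactly the stated conclusion.

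The main obstacle I anticipate is reconciling the two quantifiers: Theorem~\ref{thm:prime11} supplies $2k$-frames only for $k \ne 11^m$, so the direct frame-production route via Proposition~\ref{prop:constZ4} alone \emph{cannot} reach the exponents $k = 11^m$. This is precisely why the extremal Type~II $\ZZ_{22}$-code $C_{22,2n}$ was constructed in the first place: extremality of $C_{22,2n}$ (verified by {\sc Magma}) guarantees that $A_{22}(C_{22,2n})$ is the (unique) extremal even unimodular lattice of dimension $2n$, and the self-duality of $C_{22,2n}$ over $\ZZ_{22}$ directly exhibits the $22$-frame without any appeal to the sums-of-squares representation of Theorem~\ref{thm:prime11}. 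Thus the $11^m$ gap is bypassed entirely.

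In summary, the cleanest argument is: (i) note $C_{22,2n}$ is an extremal self-dual $\ZZ_{22}$-code, so $A_{22}(C_{22,2n})$ is an extremal even unimodular lattice of dimension $2n$ containing a $22$-frame (the frame $\frac{1}{\sqrt{22}}$ times the standard generators, equivalently the image of the identity block under Construction~A); (ii) invoke Lemma~\ref{lem:frame} with $k=22$ and $m$ arbitrary, using that $2n \equiv 0 \pmod 4$, to conclude that $A_{22}(C_{22,2n})$ contains a $22m$-frame for every positive integer $m$. Relabeling $m$ as $k$ gives the statement. I would not need to grind through the sums-of-squares computation here at all; the work was already front-loaded into constructing and verifying $C_{22,2n}$, so the lemma's proof is a one-line consequence of Lemma~\ref{lem:frame}.
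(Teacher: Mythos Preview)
Your final summary is correct and matches the paper's approach exactly: $C_{22,2n}$ is a self-dual $\ZZ_{22}$-code, so by the Construction~A correspondence $A_{22}(C_{22,2n})$ automatically contains a $22$-frame; then Lemma~\ref{lem:frame} (with $2n\equiv 0\pmod 4$) upgrades this to a $22k$-frame for every $k$. That is the whole proof, and you correctly identify it as a one-line consequence.

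However, your first and third paragraphs contain a genuine mathematical error that you should excise. You claim that in each of the dimensions $32$ and $40$ the extremal even unimodular lattice is \emph{unique}, and use this to transport frames from $A_4(\tilde{C}_4(W_{n,11}))$ to $A_{22}(C_{22,2n})$. This is false: there are millions of non-isomorphic extremal even unimodular lattices in dimension $32$ (King), and the situation in dimension $40$ is even wilder. So the isomorphism $A_{22}(C_{22,2n})\cong A_4(\tilde{C}_4(W_{n,11}))$ cannot be deduced this way, and Lemma~\ref{lem:3240-1} says nothing about $A_{22}(C_{22,2n})$. Fortunately, as you yourself observe at the end, none of that detour is needed; the $22$-frame comes for free from Construction~A, and the lemma does not require any link to the $\ZZ_4$-code lattices. (Indeed, extremality of $C_{22,2n}$ is not even used in Lemma~\ref{lem:3240-2} itself---it enters only in the next step, Theorem~\ref{thm:3240}, via Lemma~\ref{lem:2-1}.) Drop the uniqueness argument entirely and keep only your summary paragraph.
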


Since binary extremal Type~II codes of length $40$ are known,
by Lemmas~\ref{lem:2-1}, \ref{lem:3240-1} and \ref{lem:3240-2},
we have the following:

\begin{thm}\label{thm:3240}
There is an extremal Type~II $\ZZ_{2k}$-code of lengths $32,40$
for every positive integer $k$.
\end{thm}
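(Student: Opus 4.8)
The plan is to translate the existence of the codes into the existence of frames through Lemma~\ref{lem:2-1}, and then to feed in the two frame families already established in Lemmas~\ref{lem:3240-1} and~\ref{lem:3240-2}, disposing of $k=1$ separately. Fix a length $N\in\{32,40\}$; since $\lfloor 32/24\rfloor+1=\lfloor 40/24\rfloor+1=2$, Lemma~\ref{lem:2-1} applies for every $k\ge 2$ and reduces the problem to the following: for each such $k$, some extremal even unimodular lattice in dimension $N$ must contain a $2k$-frame. The lattices $A_4(\tilde{C}_4(W_{16,11}))$ and $A_4(\tilde{C}_4(W_{20,11}))$, of dimensions $32$ and $40$, together with $A_{22}(C_{22,32})$ and $A_{22}(C_{22,40})$, are the extremal even unimodular lattices I would use.

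First I would handle $k=1$. An extremal Type~II $\ZZ_2$-code is just a binary extremal Type~II code, and its minimum weight is $4\lfloor N/24\rfloor+4=8$ for both $N=32$ and $N=40$; such binary codes are classically known, so $k=1$ is immediate. For $k\ge 2$ with $k\ne 11^m$, Lemma~\ref{lem:3240-1} supplies a $2k$-frame in the extremal even unimodular lattice $A_4(\tilde{C}_4(W_{n,11}))$ (with $2n=N$), and Lemma~\ref{lem:2-1} converts it into the desired code.

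The values left open are $k=11^m$ with $m\ge 1$, which are exactly the exceptions of Theorem~\ref{thm:prime11} (where the prime $11$ is excluded), and this is precisely the gap that Lemma~\ref{lem:3240-2} is meant to fill. Since $A_{22}(C_{22,2n})$ (with $2n=N$) contains a $22j$-frame for every positive integer $j$, taking $j=11^{m-1}$ yields a $22\cdot 11^{m-1}=2\cdot 11^m=2k$-frame, and Lemma~\ref{lem:2-1} again produces an extremal Type~II $\ZZ_{2k}$-code. As the ranges $k=1$, $\{k\ge 2:\,k\ne 11^m\}$, and $\{k=11^m\}$ exhaust the positive integers, this finishes the argument. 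The one genuinely delicate point is the power-of-$11$ case: the quadratic form $\tfrac14(a^2+11b^2+c^2+11d^2)$ underlying Proposition~\ref{prop:constZ4} cannot represent $2\cdot 11^m$ under the imposed congruences (this is exactly the content of the exclusion in Theorem~\ref{thm:prime11}), which is why the extremal Type~II $\ZZ_{22}$-codes $C_{22,32}$ and $C_{22,40}$ were constructed and machine-verified specifically to supply frames of these sizes.
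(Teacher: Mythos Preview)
Your proof is correct and follows essentially the same route as the paper: handle $k=1$ via known binary extremal Type~II codes, use Lemma~\ref{lem:3240-1} together with Lemma~\ref{lem:2-1} for $k\ge 2$ with $k\ne 11^m$, and close the remaining $k=11^m$ cases using Lemma~\ref{lem:3240-2}. One small caveat on your final commentary: Theorem~\ref{thm:prime11} merely omits the prime $11$ rather than asserting the form fails to represent $2\cdot 11^m$, but this does not affect the argument.
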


%%%%%%%%%%%%%%%%%%%%%%%%%%%%%%%%%
\section{Length 48}\label{sec:48}
% Similarly to the above,
In this section and the next two sections, 
we show the existence of an extremal
Type~II $\ZZ_{2k}$-code of lengths $48,56$ and $64$
for every positive integer $k$.
% The approach in these sections is slightly different from that in 
% the previous section.
Main tools of these sections are Proposition~\ref{prop:constZk}
and Theorems~\ref{thm:prime23} and~\ref{thm:prime29},
whereas the main tools in the previous section were
Proposition~\ref{prop:constZ4} and Theorem~\ref{thm:prime11}.
%In this section and the next two sections, 
%we show the existence of an extremal
%Type~II $\ZZ_{2k}$-code of lengths $48,56$ and $64$
%for every positive integer $k$.
%The approach in these sections is slightly different from that in 
%the previous section.

Let $W_{24,23}$ be the $24 \times 24$ $(0,\pm 1)$-matrix of the form:
\[
\left(
\begin{array}{cccc}
 0     & 1 & \cdots & 1 \\
-1     &   &        &   \\
\vdots &   &   A    &   \\
-1     &   &        &   
\end{array}
\right),
\]
where $A$ is the circulant matrix with first row
\[
(0, 1, 1, 1, 1,-1, 1,-1, 1, 1,-1,-1, 1, 1,-1,-1, 1,-1, 1,-1,-1,-1,-1). 
\]
Note that $1$'s are at the nonzero squares modulo $23$.
It is well known that $W_{24,23}$ is a skew-symmetric
weighing matrix of order $24$ and weight $23$.

% Let $a,b,c$ and $d$ be integers with 
% $a \equiv d \pmod 3$ and
% $b \equiv c \pmod 3$.
% $a^2+23 b^2+c^2+23 d^2 \equiv 0 \pmod 3$.
% Since $C_{3,48}$ is self-dual and $W_{24,23}^T=-W_{24,23}$,
% $\left(\begin{array}{cc}
% W_{24,23} & I
% \end{array}\right)$, which is a parity-check matrix 
% of $C_{3,48}$,
% is also a generator matrix of $C_{3,48}$.
% Then the matrix
% \[
% N_{48}=
% \left(
% \begin{array}{cc}
% aI+bW_{24,23} & cI+dW_{24,23} \\
% -cI+dW_{24,23} & aI-bW_{24,23}
% \end{array}
% \right)
% \]
% satisfies $N_{48} N_{48}^T=(a^2+23b^2+c^2+23d^2)I$, and
% rows of $\frac{1}{\sqrt{3}}N_{48}$ are vectors of $A_3(C_{3,48})$.
% %Thus, if $\frac{1}{3}(a^2+23b^2+c^2+23d^2)$ is a positive integer then
% Thus, the set of the $48$ rows of $\frac{1}{\sqrt{3}}N_{48}$ forms a
% $\frac{1}{3}(a^2+23b^2+c^2+23d^2)$-frame of $A_3(C_{3,48})$.
If $a,b,c$ and $d$ are integers with 
$a \equiv d \pmod 3$ and
$b \equiv c \pmod 3$,
then the odd unimodular lattice  $A_3(C_3(W_{24,23}))$
contains a $\frac{1}{3}(a^2+23b^2+c^2+23d^2)$-frame
by Proposition~\ref{prop:constZk}.
Moreover, by Lemma~\ref{lem:frame} and 
Theorem~\ref{thm:prime23}, we have the following:

\begin{lem}
$A_3(C_3(W_{24,23}))$ contains a $k$-frame
for every positive integer $k$
with $k \ge 3$, 
$k\ne 2^{m_1}5^{m_2}7^{m_3}23^{m_4}$, 
where $m_i$ is a non-negative integer $(i=1,2,3,4)$.
\end{lem}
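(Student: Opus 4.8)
The plan is to assemble the lemma from the three ingredients already prepared in this section: Proposition~\ref{prop:constZk}, Theorem~\ref{thm:prime23}, and Chapman's Lemma~\ref{lem:frame}. First I would observe that $W_{24,23}$ meets the hypotheses of Proposition~\ref{prop:constZk} with $k=3$: it is a skew-symmetric $(0,\pm 1)$-matrix of order $24$ satisfying $W_{24,23}W_{24,23}^T = 23I$, and $23\equiv -1\pmod 3$. Hence, exactly as recorded in the paragraph preceding the lemma, for any integers $a,b,c,d$ with $a\equiv d\pmod 3$ and $b\equiv c\pmod 3$ the odd unimodular lattice $A_3(C_3(W_{24,23}))$ contains a $\frac{1}{3}(a^2+23b^2+c^2+23d^2)$-frame.

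Next I would feed in Theorem~\ref{thm:prime23}, which supplies, for each prime $p\ne 2,5,7,23$, integers $a,b,c,d$ obeying the two congruences above with $p=\frac{1}{3}(a^2+23b^2+c^2+23d^2)$. Combining this with the previous step shows that $A_3(C_3(W_{24,23}))$ contains a $p$-frame for every prime $p\notin\{2,5,7,23\}$; the smallest such prime is $p=3$, whose representation is witnessed by $a_1(3)=4>0$ in Table~\ref{Tab:an}. Because the lattice sits in dimension $2\cdot 24=48\equiv 0\pmod 4$, Chapman's Lemma~\ref{lem:frame} then promotes each $p$-frame to a $pm$-frame for every positive integer $m$.

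The final step is the elementary number-theoretic translation. A positive integer $k$ fails to have the shape $2^{m_1}5^{m_2}7^{m_3}23^{m_4}$ precisely when it admits a prime divisor $p\notin\{2,5,7,23\}$; for such $k$ I write $k=pm$ and invoke the $p$-frame together with Chapman's lemma to produce a $k$-frame, noting that $p\ge 3$ forces $k\ge 3$ automatically. The genuine difficulty of this result has already been absorbed into Theorem~\ref{thm:prime23}, whose modular-forms argument (lower bound \eqref{eqn:bound} via Hecke eigenforms for $\Gamma_0(92)$, plus the finite check in Table~\ref{Tab:an}) establishes the positivity of the representation numbers $a_1(p)$; granting that theorem, the lemma is a purely formal consequence, so I expect no real obstacle beyond spelling out this factorization observation.
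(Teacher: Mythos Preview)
Your proposal is correct and follows exactly the approach the paper uses: the lemma is derived from Proposition~\ref{prop:constZk} (applied to $W_{24,23}$ with $k=3$), Theorem~\ref{thm:prime23}, and Chapman's Lemma~\ref{lem:frame}, with the factorization observation you spell out being the only glue. The paper simply states this combination in one line, whereas you have written out the details; there is no substantive difference.
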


\begin{rem}
$C_3(W_{24,23})$ is a ternary extremal self-dual code of 
length $48$~\cite{Pless72} and it is called the Pless symmetry code.
\end{rem}

\begin{table}[thb]
\caption{Extremal Type~II $\ZZ_{2k}$-codes of length $48$ ($2k=14,46$)}
\label{Tab:48}
\begin{center}
%{\small
{\footnotesize
%{\scriptsize
\begin{tabular}{c|l|l}
\noalign{\hrule height0.8pt}
Code & \multicolumn{1}{c|}{$r_A$} & \multicolumn{1}{c}{$r_B$} \\
\hline
$C_{14,48}$ &
% $( 7, 0, 7, 7, 1,11, 6, 6, 9, 9,11, 1)$ &
% $(10, 8, 3, 6, 5, 3,11,12, 4, 5, 1, 0)$ \\
$(1,11, 6, 6, 9, 9,11, 1, 7, 0, 7, 7)$ &
$(5, 3,11,12, 4, 5, 1, 0, 4, 6,11, 8)$ \\
$C_{46,48}$ &
% $( 0,23, 0, 0, 1,26,29, 3,13,13,45,21)$&
% $(13,18,30, 9,23,37,33,37,35,40, 6, 8)$ \\
$( 0, 0, 1,26,29, 3,13,13,45,21, 0,23)$&
$(30, 9,23,37,33,37,35,40, 6, 8,33,28)$\\
\noalign{\hrule height0.8pt}
\end{tabular}
}
\end{center}
\end{table}
%%%%%%%%%%%%%%%%%%%%%%%%%%%%%%%%%%%%%%%%%%%%%%%%

Let $C_{2k,48}$  ($2k=14,46$) be the $\ZZ_{2k}$-code of length $48$
with generator matrix of the form (\ref{eq:GM}),
where the first rows $r_A$ and $r_B$ of $A$ and $B$
are listed in Table~\ref{Tab:48}.
Similarly to Table~\ref{Tab:3240},
these codes were found by considering pairs of
binary Type~II codes and self-dual $\ZZ_{k}$-codes.
% (see~\cite[Theorem 2.3]{DHS} for the construction method).
Since $AA^T+BB^T=-I$ and the Euclidean weights of 
all rows of the generator matrix are divisible by $4k$,
$C_{2k,48}$ is a Type~II $\ZZ_{2k}$-code of length $48$ and
$A_{2k}(C_{2k,48})$ is an even unimodular lattice $(2k=14,46)$.
In addition, we have verified by {\sc Magma} that 
$A_{2k}(C_{2k,48})$  $(2k=14,46)$ is extremal.
Hence, we have the following:

\begin{lem}\label{lem:48-1446}
For $2k=14,46$, $C_{2k,48}$ is 
an extremal Type~II $\ZZ_{2k}$-code of length $48$.
\end{lem}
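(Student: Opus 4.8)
The plan is to verify directly, for each of the two explicit codes $2k=14$ and $2k=46$, that $C_{2k,48}$ is self-dual, Type~II, and extremal. Write $M=\left(\begin{smallmatrix} A & B \\ -B^T & A^T \end{smallmatrix}\right)$ for the $24\times 24$ right-hand block of the generator matrix in (\ref{eq:GM}), where $A$ and $B$ are the $12\times 12$ negacirculant matrices with first rows $r_A,r_B$ from Table~\ref{Tab:48}, so that $C_{2k,48}$ is generated by $(I\mid M)$ over $\ZZ_{2k}$. Self-duality is equivalent to $MM^T\equiv -I \pmod{2k}$. Since negacirculant matrices of a fixed order commute and the transpose of a negacirculant matrix is again negacirculant, one has $AB=BA$, $AA^T=A^TA$ and $BB^T=B^TB$; hence the off-diagonal blocks of $MM^T$ vanish and both diagonal blocks equal $AA^T+BB^T$. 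Thus the first step reduces to checking the single congruence $AA^T+BB^T\equiv -I\pmod{2k}$ from the data in Table~\ref{Tab:48}.

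Granting self-duality, I would then invoke \cite[Lemma~2.2]{BDHO}: a self-dual $\ZZ_{2k}$-code is Type~II precisely when every row of a generator matrix has Euclidean weight divisible by $4k$. This is a finite check on the $24$ rows of $(I\mid M)$, using the entries of $r_A$ and $r_B$. Once it passes, $C_{2k,48}$ is Type~II, and consequently $A_{2k}(C_{2k,48})$ is an even unimodular lattice in dimension $48$.

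The remaining and genuinely hard step is extremality, i.e.\ that the minimum Euclidean weight meets the bound (\ref{Eq:B}), which for $n=48$ reads $d_E(C)\le 4k\lfloor 48/24\rfloor+4k=12k$. Because $\min\bigl(A_{2k}(C_{2k,48})\bigr)=\min\{2k,\ d_E(C_{2k,48})/(2k)\}$ and $2k\ge 6$ for both $2k=14,46$, the code is extremal if and only if $A_{2k}(C_{2k,48})$ has minimum norm $6$, i.e.\ is an extremal even unimodular lattice in dimension $48$. Unlike the previous two steps, this does not collapse to a short algebraic identity: one must rule out every nonzero codeword of Euclidean weight below $12k$ (equivalently, every lattice vector of norm below $6$), which I expect to be the main obstacle and which I would settle by a direct machine computation with {\sc Magma}, as in the paper. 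Establishing that the minimum norm equals $6$ for both codes then yields Lemma~\ref{lem:48-1446}.
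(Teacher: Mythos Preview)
Your proposal is correct and follows essentially the same route as the paper: verify $AA^T+BB^T\equiv -I$, check divisibility of the row Euclidean weights by $4k$ via \cite[Lemma~2.2]{BDHO}, and then establish extremality by a {\sc Magma} computation showing the lattice $A_{2k}(C_{2k,48})$ has minimum norm $6$. Your added justification for why the off-diagonal blocks of $MM^T$ vanish (commutativity of negacirculants) and your explicit reduction of code extremality to lattice extremality are sound elaborations of what the paper leaves implicit.
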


% Denote by $C_{14,48}$
% the above extremal Type~II $\ZZ_{14}$-code of 
% length $48$.

The existence of an extremal Type~II $\ZZ_{2k}$-code of 
length $48$ is known for $k=1,2,\ldots,6$ (see~\cite[Table 1]{HM}).
Denote by $C_{2k,48}$ an existing extremal Type~II $\ZZ_{2k}$-code of 
length $48$ for $2k=8$ and $10$.
It is known that one of the two even unimodular neighbors 
$L_0 \cup L_1$ and $L_0 \cup L_3$, which are given in Section~\ref{sec:2U},
of $L=A_3(C_3(W_{24,23}))$ is extremal, and
this is denoted by $P_{48p}$ in~\cite{SPLAG} (see also~\cite{HKO}).
% Then we have the following:
By Lemma~\ref{lem:neighbor},
we have the following:

\begin{lem}
\begin{itemize}
\item[\rm (1)]
$P_{48p}$ contains a $2k$-frame for every positive integer $k$
with $k \ge 3$, $k\ne 2^{m_1}5^{m_2}7^{m_3}23^{m_4}$, 
where $m_i$ is a non-negative integer $(i=1,2,3,4)$.
\item[\rm (2)]
$A_{8}(C_{8,48})$
contains an $8k$-frame for every positive integer $k$.
\item[\rm (3)]
$A_{10}(C_{10,48})$
contains a $10k$-frame for every positive integer $k$.
\item[\rm (4)]
$A_{14}(C_{14,48})$
contains a $14k$-frame for every positive integer $k$.
\item[\rm (5)]
$A_{46}(C_{46,48})$
contains a $46k$-frame for every positive integer $k$.
\end{itemize}
\end{lem}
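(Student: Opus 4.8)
The plan is to obtain all five parts as immediate consequences of the frame-transfer lemmas (Lemmas~\ref{lem:frame} and~\ref{lem:neighbor}) together with the frame results already established for the relevant dimension-$48$ lattices, so that no new computation is required. The unifying principle is that once a unimodular lattice of dimension $48\equiv 0\pmod 4$ is known to carry one frame of a given scale, Chapman's Lemma~\ref{lem:frame} automatically produces an infinite family of frames of every integer multiple of that scale; part (1) additionally uses the neighbour construction to pass from the odd lattice to its even neighbour $P_{48p}$.

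For part (1), I would take $L=A_3(C_3(W_{24,23}))$, which is an odd unimodular lattice in dimension $48$ and, by the lemma immediately preceding the statement, contains a $k$-frame for every positive integer $k$ with $k\ge 3$ and $k\ne 2^{m_1}5^{m_2}7^{m_3}23^{m_4}$. Since $48\equiv 0\pmod 8$, Lemma~\ref{lem:neighbor} applies and shows that both even unimodular neighbours of $L$ contain a $2k$-frame whenever $L$ contains a $k$-frame. As $P_{48p}$ is exactly one of these two neighbours (as recalled just before the statement), it therefore contains a $2k$-frame for every $k$ in the stated exceptional range, which is precisely assertion (1); note that the exceptional set is inherited verbatim from the $k$-frame lemma for $L$. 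For parts (2)--(5), the input is that each $C_{2k,48}$ with $2k=8,10,14,46$ is an extremal Type~II $\ZZ_{2k}$-code of length $48$: for $2k=14,46$ this is Lemma~\ref{lem:48-1446}, and for $2k=8,10$ it is a known extremal code. By the characterization of frames via Construction~A recalled in Section~\ref{sec:Pre}, the lattice $A_{2k}(C_{2k,48})$ is an extremal even unimodular lattice which, being of the form $A_{2k}(C)$ for a Type~II $\ZZ_{2k}$-code $C$, automatically contains a $2k$-frame. Because the dimension $48$ is divisible by $4$, Lemma~\ref{lem:frame} then promotes this single $2k$-frame to a $2k\,m$-frame for every positive integer $m$; renaming the multiplier $m$ as the running index $k$ in each statement yields the $8k$-, $10k$-, $14k$-, and $46k$-frames asserted in (2), (3), (4), and (5) respectively.

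There is no substantive obstacle here, since every part reduces to invoking a previously established lemma; the only genuine care required is notational bookkeeping. Specifically, I must keep the fixed modulus (the $2k$ that defines the $\ZZ_{2k}$-code and the base frame) cleanly separated from the free multiplier supplied by Lemma~\ref{lem:frame} that appears as ``$k$'' in the statements of (2)--(5), and I must confirm that $P_{48p}$ is indeed the extremal member of the neighbour pair $\{L_0\cup L_1,\,L_0\cup L_3\}$ so that part (1) legitimately inherits the exact exceptional set $k\ne 2^{m_1}5^{m_2}7^{m_3}23^{m_4}$ rather than a larger one.
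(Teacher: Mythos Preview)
Your proposal is correct and matches the paper's approach: the paper itself simply writes ``By Lemma~\ref{lem:neighbor}, we have the following'' before stating the lemma, leaving the details implicit. You have made explicit exactly the right ingredients---Lemma~\ref{lem:neighbor} for part~(1) via the odd lattice $A_3(C_3(W_{24,23}))$ and its extremal even neighbour $P_{48p}$, and Lemma~\ref{lem:frame} applied to the tautological $2k$-frame of $A_{2k}(C_{2k,48})$ for parts~(2)--(5)---so your write-up is, if anything, more careful than the paper's one-line justification.
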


Since $P_{48p}$ and $A_{2k}(C_{2k,48})$  ($2k=8,10,14,46$)
are extremal even unimodular lattices,
% there is an extremal Type~II $\ZZ_{2k}$-code of 
% length $48$ for a positive integer $k$
% with $k \ge 3$, $k\ne 23^{m}$, 
% where $m$ is a positive integer.
by Lemma~\ref{lem:2-1}
we have the following:

\begin{thm}\label{thm:48}
There is an extremal Type~II $\ZZ_{2k}$-code of
length $48$ for every positive integer $k$.
\end{thm}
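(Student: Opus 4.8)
The plan is to invoke Lemma~\ref{lem:2-1}. For $n=48$ we have $\lfloor n/24\rfloor+1 = 3$, so the lemma reduces the existence of an extremal Type~II $\ZZ_{2k}$-code of length $48$ to the existence of an extremal even unimodular lattice in dimension $48$ that contains a $2k$-frame, valid for every $k \ge 3$. The remaining values $k=1,2$ (indeed $k=1,\ldots,6$) are already handled by the explicit codes tabulated in~\cite[Table 1]{HM}, so it suffices to exhibit, for each $k \ge 3$, at least one extremal even unimodular lattice of dimension $48$ carrying a $2k$-frame.

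First I would assemble the frame data furnished by the preceding lemma, all of whose lattices are extremal even unimodular. The neighbor $P_{48p}$ contains a $2k$-frame for every $k \ge 3$ with $k \ne 2^{m_1}5^{m_2}7^{m_3}23^{m_4}$. The four auxiliary lattices contribute frames of prescribed divisibility: $A_8(C_{8,48})$ carries $8k$-frames, hence $2k'$-frames for every multiple $k'$ of $4$; $A_{10}(C_{10,48})$ carries $10k$-frames, covering multiples of $5$; $A_{14}(C_{14,48})$ carries $14k$-frames, covering multiples of $7$; and $A_{46}(C_{46,48})$ carries $46k$-frames, covering multiples of $23$. By Lemma~\ref{lem:2-1}, any $2k$-frame located in one of these extremal lattices yields the desired extremal Type~II $\ZZ_{2k}$-code.

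The heart of the argument is then a short case analysis on the factorization of $k \ge 3$. If $k$ is \emph{not} of the form $2^{m_1}5^{m_2}7^{m_3}23^{m_4}$, then $P_{48p}$ already supplies a $2k$-frame. Otherwise write $k = 2^{m_1}5^{m_2}7^{m_3}23^{m_4}$. If $m_2 \ge 1$, $m_3 \ge 1$, or $m_4 \ge 1$, then $k$ is a multiple of $5$, $7$, or $23$, and the corresponding lattice $A_{10}(C_{10,48})$, $A_{14}(C_{14,48})$, or $A_{46}(C_{46,48})$ provides the frame. The only surviving case is $k = 2^{m_1}$, where $k \ge 3$ forces $m_1 \ge 2$; then $k$ is a multiple of $4$ and is handled by $A_8(C_{8,48})$. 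Thus every $k \ge 3$ is covered, and together with the tabulated small cases this completes the theorem.

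I expect the one point needing care to be the verification that these four divisibility classes, together with the complement of $\{2^{m_1}5^{m_2}7^{m_3}23^{m_4}\}$, genuinely exhaust all integers $k \ge 3$: the only would-be gaps are the pure powers $k = 2^{m_1}$ with $m_1 \le 1$, namely $k = 1, 2$, which lie below the range $k \ge 3$ and are precisely the explicit codes borrowed from~\cite[Table 1]{HM}. All the substantive labor—constructing $C_{14,48}$ and $C_{46,48}$, proving the number-theoretic Theorem~\ref{thm:prime23}, and identifying the extremal neighbor $P_{48p}$—has been discharged in the earlier results, so the proof here is essentially the bookkeeping that stitches them together.
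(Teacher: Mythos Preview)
Your proposal is correct and follows essentially the same approach as the paper: the paper proves Theorem~\ref{thm:48} by citing the five-part lemma immediately preceding it together with Lemma~\ref{lem:2-1}, leaving the case analysis implicit, while you have spelled that analysis out explicitly (and correctly). The only minor difference is presentational---the paper compresses what you wrote into a single sentence.
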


\begin{rem}
Three non-isomorphic extremal even unimodular lattices
are known for dimension $48$ (see~\cite{SPLAG}).
It is worthwhile to determine whether 
$A_{14}(C_{14,48})$ and $A_{46}(C_{46,48})$ are new or not.
\end{rem}

%%%%%%%%%%%%%%%%%%%%%%%%%%%%%%%%%
\section{Length 56}\label{sec:56}

Let $D_{28}$ be the $28 \times 28$ 
$(0,\pm1,\pm2)$-matrix 
$
\left(
\begin{array}{cc}
A & B \\
-B^T & A^T
\end{array}
\right),
$
where
$A$ and $B$ are negacirculant matrices with
first rows $r_A$ and $r_B$:
\begin{multline*}
r_A=( 0, 1, 2, 0, 0, 0, 0, 0, 0, 0, 0, 0, 2, 1) \text{ and }\\
r_B=(-1, 0, 1,-1, 0, 2,-1, 2, 0, 1, 1, 0,-1, 2),
\end{multline*}
respectively.
The matrix $D_{28}$ satisfies that
$D_{28} D_{28}^T=29 I$ and
$D_{28}^T=-D_{28}$.

% Let $a,b,c$ and $d$ be integers with 
% $a \equiv d \pmod 5$ and
% $b \equiv c \pmod 5$.
% $a^2+29 b^2+c^2+29 d^2 \equiv 0 \pmod 5$.
% Since $C_{5,56}$ is self-dual and $D_{28}^T=-D_{28}$,
% $\left(\begin{array}{cc}
% D_{28} & I
% \end{array}\right)$, which is a parity-check matrix 
% of $C_{5,56}$,
% is also a generator matrix of $C_{5,56}$.
% Then the matrix
% \[
% N_{56}=
% \left(
% \begin{array}{cc}
% aI+bD_{28} & cI+dD_{28} \\
% -cI+dD_{28} & aI-bD_{28}
% \end{array}
% \right)
% \]
% satisfies $N_{56} N_{56}^T=(a^2+29b^2+c^2+29d^2)I$, and
% rows of $\frac{1}{\sqrt{5}}N_{56}$ are vectors of $A_5(C_{5,56})$.
% %Thus, if $\frac{1}{5}(a^2+29b^2+c^2+29d^2)$ is a positive integer then
% Thus, the set of the $56$ rows of $\frac{1}{\sqrt{5}}N_{56}$ forms a
% $\frac{1}{5}(a^2+29b^2+c^2+29d^2)$-frame of $A_5(C_{5,56})$.
If $a,b,c$ and $d$ are integers with 
$a \equiv d \pmod 5$ and
$b \equiv c \pmod 5$, then
the odd unimodular lattice 
$A_5(C_5(D_{28}))$ contains a 
$\frac{1}{5}(a^2+29b^2+c^2+29d^2)$-frame
by Proposition~\ref{prop:constZk}.
Moreover, by Lemma~\ref{lem:frame} and Theorem~\ref{thm:prime29}, 
we have the following:

\begin{lem}
$A_5(C_5(D_{28}))$ contains a $k$-frame
for every positive integer $k$ with $k \ge 5$, 
$k\ne 2^{m_1}3^{m_2}7^{m_3}17^{m_4}23^{m_5}$, 
where $m_i$ is a non-negative integer $(i=1,2,3,4,5)$.
\end{lem}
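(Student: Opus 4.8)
The plan is to assemble this lemma from three ingredients already established: the frame construction of Proposition~\ref{prop:constZk}, the representation result of Theorem~\ref{thm:prime29}, and the frame-multiplication principle of Lemma~\ref{lem:frame}. The argument runs in exact parallel to the length-$48$ lemma, with the pair $(m,k)=(29,5)$ playing the role that $(23,3)$ played there.

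First I would apply Proposition~\ref{prop:constZk} with $k=5$ and $M=D_{28}$. Its hypotheses are met: $D_{28}$ is a $28\times 28$ $(0,\pm1,\pm2)$-matrix with $D_{28}^T=-D_{28}$ and $D_{28}D_{28}^T=29I$, and since $29\equiv -1\pmod 5$ we may take $m=29$. The proposition then guarantees that whenever $a\equiv d\pmod 5$ and $b\equiv c\pmod 5$, the $56$ rows of $F(D_{28})$ form a $\frac{1}{5}(a^2+29b^2+c^2+29d^2)$-frame of the unimodular lattice $A_5(C_5(D_{28}))$.

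Next I would feed Theorem~\ref{thm:prime29} into this. That theorem supplies, for each prime $p\notin\{2,3,7,17,23\}$, integers $a,b,c,d$ satisfying exactly the congruences $a\equiv d\pmod 5$ and $b\equiv c\pmod 5$ and with $p=\frac{1}{5}(a^2+29b^2+c^2+29d^2)$. Combining with the previous step yields a $p$-frame of $A_5(C_5(D_{28}))$ for every prime $p\notin\{2,3,7,17,23\}$. Finally, to pass from primes to general $k$, I would invoke Lemma~\ref{lem:frame}: the ambient dimension is $2n=56\equiv 0\pmod 4$, so a $p$-frame produces a $pm$-frame for every positive integer $m$. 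Since the hypothesis $k\ne 2^{m_1}3^{m_2}7^{m_3}17^{m_4}23^{m_5}$ says precisely that $k$ has some prime divisor $p\notin\{2,3,7,17,23\}$, writing $k=p\cdot(k/p)$ and applying Lemma~\ref{lem:frame} with $m=k/p$ delivers the claimed $k$-frame; as any such $p$ is at least $5$, the restriction $k\ge 5$ is automatic.

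All of the substantive work has already been carried out in Theorem~\ref{thm:prime29}, so the steps above amount to bookkeeping and there is no real analytic obstacle remaining. The only points demanding care---and hence the closest thing to an obstacle---are verifying that the two congruence conditions in Proposition~\ref{prop:constZk} coincide with those in Theorem~\ref{thm:prime29} (so that the integers produced by the latter are genuinely admissible in the former), and checking that the set of excluded primes in Theorem~\ref{thm:prime29} is exactly the set appearing in the exponent description of the excluded $k$, so that the characterization ``$k$ has a prime factor outside $\{2,3,7,17,23\}$'' is faithfully matched.
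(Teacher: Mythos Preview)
Your proposal is correct and follows exactly the paper's approach: apply Proposition~\ref{prop:constZk} with $(k,m)=(5,29)$ and $M=D_{28}$, invoke Theorem~\ref{thm:prime29} to obtain a $p$-frame for each prime $p\notin\{2,3,7,17,23\}$, and then use Lemma~\ref{lem:frame} to pass to all admissible $k$. The paper presents this argument in the two sentences immediately preceding the lemma, so there is nothing further to add.
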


\begin{rem}
The weight enumerator of a code
is given by $\sum_{i} A_i y^i$, where $A_i$ denotes the number
of codewords of weight $i$.
We have verified by {\sc Magma} that
$C_5(D_{28})$ has weight enumerator 
$1+ 168 y^{12} + 224 y^{14} + 448 y^{15} +9464  y^{16}
+ \cdots$ and $A_5(C_5(D_{28}))$ has minimum norm $5$.
\end{rem}

\begin{rem}
We have verified by {\sc Magma} that every ternary self-dual
code $C_3(W)$ has minimum weight less than $15$
if $W$ is a skew-symmetric weighing matrix
of order $28$ and weight $k \equiv 2 \pmod 3$,
which has the form
$
\left(
\begin{array}{cc}
A & B \\
-B^T & A^T
\end{array}
\right),
$
where $A$ and $B$ are negacirculant matrices.
% Hence, we consider the above matrix $D_{28}$.
This is a reason to find the above matrix $D_{28}$.
% $(0,\pm1,\pm2)$-matrices $M$ satisfying that
% $M^T=-M$ and $M M^T=kI$ with $k \equiv -1 \pmod 5$ as described above.
\end{rem}

%%%%%%%%%%%%%%%%%%%%%%%% Z14 Z34 Z46 %%%%%%%%%%%%%%%%%%%%%%%%
%%%%%%%%%%%%%%%%%%%%%%%%%%%%%%%%%%%%%%%%%%%%%%%%
\begin{table}[thb]
\caption{Extremal Type~II $\ZZ_{2k}$-codes of length $56$ ($2k=14,34,46$)}
\label{Tab:56}
\begin{center}
%{\small
%{\footnotesize
{\scriptsize
\begin{tabular}{c|l|l}
\noalign{\hrule height0.8pt}
$2k$& \multicolumn{1}{c|}{$r_A$} & \multicolumn{1}{c}{$r_B$} \\
\hline
%14& $(0,0,0,0,8,5,5,2,3,8,12,10,6,2)$
% $(2,9,7,6,2,3,5,7,0,7,7,7,6,8)$ \\
14 & $(0,0,0,0,1,9,0,12,8,10,12,9,12,7)$& 
$(8,1,13,4,9,7,9,0,10,7,0,0,8,1)$ \\
34 & $(0,0,0,0,1,13,6,1,4,0,6,4,22,8)$
& $(17,32,4,30,1,1,6,32,31,23,23,14,9,27)$ \\
46 & $(0,0,0,0,1,23,6,33,43,19,30,18,29,11)$ &
$(17,13,32,23,42,16,38,31,29,1,30,25,41,22)$ \\
\noalign{\hrule height0.8pt}
\end{tabular}
}
\end{center}
\end{table}
%%%%%%%%%%%%%%%%%%%%%%%%%%%%%%%%%%%%%%%%%%%%%%%%
Let $C_{2k,56}$  ($2k=14,34,46$) be the $\ZZ_{2k}$-code of length $56$
with generator matrix of the form (\ref{eq:GM}),
where the first rows $r_A$ and $r_B$ of $A$ and $B$
are listed in Table~\ref{Tab:56}.
These codes were found by considering pairs of
binary Type~II codes and self-dual $\ZZ_{k}$-codes.
Since $AA^T+BB^T=-I$ and the Euclidean weights of 
all rows of the generator matrix are divisible by $4k$,
$C_{2k,56}$ is a Type~II $\ZZ_{2k}$-code of length $56$ and 
$A_{2k}(C_{2k,56})$ is an even unimodular lattice  ($2k=14,34,46$).
Moreover, we have verified by {\sc Magma} that $A_{2k}(C_{2k,56})$ 
is extremal.  Hence, we have the following:

\begin{lem}\label{lem:56-14}
For $2k=14,34,46$,
$C_{2k,56}$ is an extremal Type~II $\ZZ_{2k}$-code of length $56$.
\end{lem}
%%%%%%%%%%%%%%%%%%%%%%%% Z14 Z34 Z46 %%%%%%%%%%%%%%%%%%%%%%%%

We have verified by {\sc Magma} that one of the
even unimodular neighbors $L_0 \cup L_1$ and $L_0 \cup L_3$
of $L=A_5(C_5(D_{28}))$ is extremal.
Denote by $L_{56}$ 
the extremal even unimodular neighbor of $A_5(C_5(D_{28}))$.
The existence of an extremal Type~II $\ZZ_{2k}$-code of 
length $56$ is known for $k=1,2,\ldots,6$ (see~\cite[Table 1]{HM}).
Denote by $C_{2k,56}$ an existing extremal Type~II $\ZZ_{2k}$-code of 
length $56$ for $2k=6$ and $8$.
% Let $C_{3,56}$ be an extremal ternary self-dual code of length $56$.
% Many extremal ternary self-dual codes of length $56$ are known.
% Let $L_S(C_{3,56})$ and $L_T(C_{3,56})$ be the
% even unimodular neighbors of $A_3(C_{3,56})$.
% Then both $L_S(C_{3,64,t})$ and $L_T(C_{3,64,t})$ are
% extremal (see \cite[Theorem 6]{HKO}).
By Lemma~\ref{lem:neighbor}, we have the following:

\begin{lem}
\begin{itemize}
\item[\rm (1)]
$L_{56}$ contains a $2k$-frame for every positive integer $k$
with $k \ge 5$, 
$k\ne 2^{m_1}3^{m_2}7^{m_3}17^{m_4}23^{m_5}$, 
where $m_i$ is a non-negative integer $(i=1,2,3,4,5)$.
\item[\rm (2)]
$A_6(C_{6,56})$ contains a $6k$-frame for every positive integer $k$.
\item[\rm (3)]
$A_8(C_{8,56})$ contains an $8k$-frame for every positive integer $k$.
\item[\rm (4)]
$A_{14}(C_{14,56})$ contains a $14k$-frame for every positive integer $k$.
\item[\rm (5)]
$A_{34}(C_{34,56})$ contains a $34k$-frame for every positive integer $k$.
\item[\rm (6)]
$A_{46}(C_{46,56})$ contains a $46k$-frame for every positive integer $k$.
\end{itemize}
\end{lem}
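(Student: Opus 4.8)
The plan is to obtain every assertion by combining the frame-existence results already in hand with the two multiplicative devices of Section~\ref{sec:frame}, namely Chapman's Lemma~\ref{lem:frame} and the neighbor Lemma~\ref{lem:neighbor}; no new construction is required, so the proof is essentially an assembly step.

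For part~(1), I would start from the preceding lemma, which asserts that the odd unimodular lattice $A_5(C_5(D_{28}))$ contains a $k$-frame for every positive integer $k$ with $k \ge 5$ and $k \ne 2^{m_1}3^{m_2}7^{m_3}17^{m_4}23^{m_5}$. Since the dimension is $56 \equiv 0 \pmod 8$ and $A_5(C_5(D_{28}))$ is odd (its minimum norm is $5$, as recorded in the remark above), Lemma~\ref{lem:neighbor} applies and shows that both even unimodular neighbors of $A_5(C_5(D_{28}))$ contain a $2k$-frame for exactly those $k$. As $L_{56}$ is by definition one of these two neighbors, part~(1) follows verbatim, with the same exceptional set of $k$.

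For parts~(2)--(6) I would treat the five values $2k \in \{6,8,14,34,46\}$ uniformly. In each case $C_{2k,56}$ is a Type~II $\ZZ_{2k}$-code of length $56$ --- an existing extremal code from~\cite[Table~1]{HM} for $2k=6,8$, and the code produced in Lemma~\ref{lem:56-14} for $2k=14,34,46$ --- so $A_{2k}(C_{2k,56})$ is an even unimodular lattice in dimension $56$ that, by the Construction~A characterization recalled in Section~\ref{sec:Pre} (take the code itself as the witness of the ``if and only if''), contains a $2k$-frame. Because $56 \equiv 0 \pmod 4$, Lemma~\ref{lem:frame} upgrades this single $2k$-frame to a $2k\,m$-frame for every positive integer $m$; renaming $m$ yields precisely the stated $6k$-, $8k$-, $14k$-, $34k$-, and $46k$-frames.

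I expect no genuine obstacle at this stage: all the difficulty has already been absorbed into the modular-forms computation behind Theorem~\ref{thm:prime29} and into the explicit skew-symmetric matrix $D_{28}$ and the codes $C_{2k,56}$. The only points that require a moment's care are the two arithmetic hypotheses that license the general machinery --- that $56$ is divisible by $8$ and that $A_5(C_5(D_{28}))$ is odd, so that the neighbor lemma may be invoked in part~(1), and that $56$ is divisible by $4$, so that Chapman's lemma may be invoked throughout --- both of which are immediate.
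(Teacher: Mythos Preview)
Your proposal is correct and matches the paper's approach exactly: part~(1) follows from the preceding lemma on $A_5(C_5(D_{28}))$ together with Lemma~\ref{lem:neighbor}, while parts~(2)--(6) follow from the tautological $2k$-frame in $A_{2k}(C_{2k,56})$ combined with Lemma~\ref{lem:frame}. The paper compresses all of this into the single phrase ``By Lemma~\ref{lem:neighbor}'' (which strictly speaking only justifies part~(1)), whereas you have spelled out the argument for parts~(2)--(6) as well; your version is the more careful write-up of the same reasoning.
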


Since $L_{56}$ and $A_{2k}(C_{2k,56})$ $(2k=6,8,14,34,46)$ are
extremal even unimodular lattices,
% there is an extremal Type~II $\ZZ_{2k}$-code of 
% length $56$ for a positive integer $k$
% with $k\ne 23^{m}$, 
% where $m$ is a non-negative integer.
% {\bf (if Theorem \ref{thm:prime29} can be proved)}
by Lemma~\ref{lem:2-1} we have the following:

\begin{thm}\label{thm:56}
There is an extremal Type~II $\ZZ_{2k}$-code of
length $56$ for every positive integer $k$.
\end{thm}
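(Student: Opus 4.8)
The plan is to assemble Theorem~\ref{thm:56} by combining the preceding lemma, which produces $2k$-frames in extremal even unimodular lattices of dimension $56$ for a cofinite set of positive integers $k$, with Lemma~\ref{lem:2-1}, which translates the existence of such a $2k$-frame into the existence of an extremal Type~II $\ZZ_{2k}$-code. Since $n=56$ gives $\lfloor n/24 \rfloor+1=3$, Lemma~\ref{lem:2-1} applies precisely when $k\ge 3$, so the frame-to-code dictionary is available for every value of $k$ we must treat. The strategy is therefore to show that for each positive integer $k$, at least one of the extremal even unimodular lattices listed in the preceding lemma (namely $L_{56}$ and the lattices $A_{2k'}(C_{2k',56})$ for $2k'\in\{6,8,14,34,46\}$) contains a $2k$-frame.

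First I would dispose of the small cases. The existence of an extremal Type~II $\ZZ_{2k}$-code of length $56$ is already known for $k=1,2,\ldots,6$ by~\cite[Table~1]{HM}, so these require no further argument. For larger $k$ the main engine is part~(1) of the preceding lemma: $L_{56}$ contains a $2k$-frame for every positive integer $k$ with $k\ge 5$ and $k\ne 2^{m_1}3^{m_2}7^{m_3}17^{m_4}23^{m_5}$. This covers all sufficiently large $k$ except those whose prime factorization uses only the primes $2,3,7,17,23$. The next step is to observe that these excluded values are exactly the cases handled by parts~(2)--(6) via Lemma~\ref{lem:frame}: if $k$ is a product of powers of $2,3,7,17,23$, then $k$ is divisible by one of $3,4,7,17,23$, and correspondingly $A_6(C_{6,56})$, $A_8(C_{8,56})$, $A_{14}(C_{14,56})$, $A_{34}(C_{34,56})$, or $A_{46}(C_{46,56})$ supplies an $m$-frame that I can multiply up to a $2k$-frame through Lemma~\ref{lem:frame} (noting $56\equiv 0\pmod 4$).

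The key bookkeeping step, and the place where I expect the real care to be needed, is verifying that the excluded set from part~(1) is genuinely \emph{covered} by the supplementary frames of parts~(2)--(6). Concretely, suppose $k=2^{m_1}3^{m_2}7^{m_3}17^{m_4}23^{m_5}$ with $k\ge 3$ (the cases $k\le 6$ being already settled). I would argue by cases on which exponent is nonzero: if $m_2\ge 1$ then $6\mid 2k$ and $A_6(C_{6,56})$ gives a $6$-frame, hence a $2k$-frame; if $m_1\ge 2$ then $8\mid 2k$ and $A_8(C_{8,56})$ applies; if $m_3\ge 1$ then $14\mid 2k$; if $m_4\ge 1$ then $34\mid 2k$; and if $m_5\ge 1$ then $46\mid 2k$. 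The only residual possibility is $k$ a power of $2$ with exponent $\le 1$, i.e.\ $k\in\{1,2\}$, already covered by~\cite{HM}. Thus every $k$ falls into at least one case, and since each of $L_{56}$ and the $A_{2k'}(C_{2k',56})$ is an extremal even unimodular lattice in dimension $56$, Lemma~\ref{lem:2-1} converts the $2k$-frame into the desired extremal Type~II $\ZZ_{2k}$-code. The main obstacle is purely combinatorial rather than analytic: ensuring the divisibility case analysis is exhaustive, so that no value of $k$ slips through the union of the excluded sets. Once that coverage is confirmed, the theorem follows immediately.
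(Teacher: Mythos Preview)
Your proposal is correct and follows exactly the approach of the paper: combine the preceding lemma's supply of $2k$-frames in the extremal lattices $L_{56}$ and $A_{2k'}(C_{2k',56})$ ($2k'\in\{6,8,14,34,46\}$) with Lemma~\ref{lem:2-1}, using the known small cases from~\cite[Table~1]{HM} for $k\le 6$. Your explicit divisibility case analysis is more detailed than the paper's one-line statement, but the logic is identical and the coverage check is sound.
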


%%%%%%%%%%%%%%%%%%%%%%%%%%%%%%%%%
\section{Length 64}\label{sec:64}

Let $W_{32,23}$ and  $W_{32,17}$ 
be the $32 \times 32$ $(0,\pm1)$-matrices
$
\left(
\begin{array}{cc}
A & B \\
-B^T & A^T
\end{array}
\right),
$
where
$A,B$ are negacirculant matrices with
the following first rows $r_A,r_B$:
\begin{multline*}
r_A= (0,1,1,0,-1,1,-1,0,0,0,-1,1,-1,0,1,1), \\
r_B= (0,1,0,1,1,1,1,-1,0,-1,-1,1,-1,-1,-1,1)
\end{multline*}
and
\begin{multline*}
r_A= (0,0,1,1,0,0,0,0,1,0,0,0,0,1,1,0),\\
r_B= (0,1,-1,-1,0,-1,1,0,1,-1,1,1,1,0,-1,1),
\end{multline*}
respectively.
The matrix $W_{32,23}$ (resp.\ $W_{32,17}$) is a skew-symmetric
weighing matrix of order $32$ and weight $23$ (resp.\ $17$).

% Let $a,b,c$ and $d$ be integers with 
% $a \equiv d \pmod 3$ and
% $b \equiv c \pmod 3$.
% $a^2+23 b^2+c^2+23 d^2 \equiv 0 \pmod 3$.
% Since $C_{3,64,t}$ is self-dual and $W_{32,t}^T=-W_{32,t}$,
% $\left(\begin{array}{cc}
% W_{32,t} & I
% \end{array}\right),$
% which is a parity-check matrix of $C_{3,64,t}$,
% is also a generator matrix of $C_{3,64,t}$.
% Then the matrix
% \[
% N_{64,t}=
% \left(
% \begin{array}{cc}
% aI+bW_{32,t} & cI+dW_{32,t} \\
% -cI+dW_{32,t} & aI-bW_{32,t}
% \end{array}
% \right)
% \]
% satisfies $N_{64,t} N_{64,t}^T=(a^2+tb^2+c^2+td^2)I$, and
% rows of $\frac{1}{\sqrt{3}}N_{64,t}$ are vectors of $A_3(C_{3,64,t})$.
% %Thus, if $\frac{1}{3}(a^2+tb^2+c^2+td^2)$ is a positive integer then
% Thus, the set of the $64$ rows of $\frac{1}{\sqrt{3}}N_{64,t}$ forms a
% $\frac{1}{3}(a^2+tb^2+c^2+td^2)$-frame of $A_3(C_{3,64,t})$.
If $a,b,c$ and $d$ are integers with 
$a \equiv d \pmod 3$ and
$b \equiv c \pmod 3$, then 
the odd unimodular lattice  $A_3(C_3(W_{32,23}))$ contains
a $\frac{1}{3}(a^2+23b^2+c^2+23d^2)$-frame
by Proposition~\ref{prop:constZk}.
Moreover, 
by Lemma~\ref{lem:frame} and Theorem~\ref{thm:prime23}, we have the following:

\begin{lem}
$A_3(C_3(W_{32,23}))$ contains a $k$-frame for every positive integer $k$
with $k \ge 3$, $k\ne 2^{m_1}5^{m_2}7^{m_3}23^{m_4}$, 
where $m_i$ is a non-negative integer $(i=1,2,3,4)$.
\end{lem}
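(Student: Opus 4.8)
The plan is to mirror verbatim the argument already used for $A_3(C_3(W_{24,23}))$, since $W_{32,23}$ plays exactly the same structural role. First I would record that $W_{32,23}$ is a $32 \times 32$ skew-symmetric $(0,\pm1)$-matrix with $W_{32,23} W_{32,23}^T = 23 I$, and that $23 \equiv -1 \pmod 3$. Hence $M = W_{32,23}$ meets the hypotheses of Proposition~\ref{prop:constZk} with $n = 32$, $m = 23$ and $k = 3$, so $A_3(C_3(W_{32,23}))$ is an odd unimodular lattice in dimension $64$, and for any integers $a,b,c,d$ with $a \equiv d \pmod 3$ and $b \equiv c \pmod 3$ it contains a $\frac{1}{3}(a^2 + 23b^2 + c^2 + 23d^2)$-frame. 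This is precisely the content of the paragraph immediately preceding the lemma.

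The key input is then Theorem~\ref{thm:prime23}: for each prime $p \ne 2,5,7,23$ one can choose integers $a,b,c,d$ with the required congruences realizing $p = \frac{1}{3}(a^2 + 23b^2 + c^2 + 23d^2)$. Combining this with the previous observation yields that $A_3(C_3(W_{32,23}))$ contains a $p$-frame for every prime $p \ne 2,5,7,23$. Since the dimension $64$ is divisible by $4$, Chapman's Lemma~\ref{lem:frame} upgrades each such $p$-frame to a $pm$-frame for every positive integer $m$.

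To finish, I would translate the excluded set into a divisibility condition. Any positive integer $k$ not of the form $2^{m_1} 5^{m_2} 7^{m_3} 23^{m_4}$ must have a prime divisor $p \notin \{2,5,7,23\}$; the smallest such prime is $3$, so this $p$ automatically satisfies $p \ne 2,5,7,23$ and $p \ge 3$, whence also $k \ge p \ge 3$. Writing $k = pm$ and invoking the $p$-frame together with Lemma~\ref{lem:frame} produces the desired $k$-frame, covering every $k$ in the stated range.

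I do not anticipate a genuine obstacle: all the analytic difficulty resides in Theorem~\ref{thm:prime23}, whose modular-form argument is already established, and in checking that $W_{32,23}$ is indeed a weight-$23$ skew-symmetric weighing matrix (a finite verification, done by {\sc Magma}). The only point requiring care is the elementary bookkeeping that the set $\{2^{m_1} 5^{m_2} 7^{m_3} 23^{m_4}\}$ coincides with the integers all of whose prime factors lie in $\{2,5,7,23\}$, so that its complement is exactly the set of $k$ admitting a usable prime $p$.
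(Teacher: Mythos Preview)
Your proposal is correct and follows exactly the paper's own argument: the paper simply invokes Proposition~\ref{prop:constZk} (applied to $M=W_{32,23}$ with $k=3$, $m=23$), Theorem~\ref{thm:prime23}, and Lemma~\ref{lem:frame}, just as you do. Your added bookkeeping about the excluded set and the automatic lower bound $k\ge 3$ is a welcome elaboration of what the paper leaves implicit.
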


\begin{lem}
$A_3(C_3(W_{32,17}))$ contains a 
$7k$-frame and a $23k$-frame for every positive integer $k$.
\end{lem}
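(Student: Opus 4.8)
The plan is to mirror the structure of the preceding lemmas in this section and reduce the claim to exhibiting two explicit small frames. The crucial structural observation is that $W_{32,17}$ is a skew-symmetric weighing matrix of order $32$ and weight $17$, and $17 \equiv -1 \pmod 3$; hence $W_{32,17}$ satisfies the hypotheses of Proposition~\ref{prop:constZk} with $k=3$ and $m=17$. Consequently, whenever $a,b,c,d$ are integers with $a \equiv d \pmod 3$ and $b \equiv c \pmod 3$, the odd unimodular lattice $A_3(C_3(W_{32,17}))$ contains a $\frac{1}{3}(a^2+17b^2+c^2+17d^2)$-frame. Since the ambient dimension is $64 \equiv 0 \pmod 4$, Lemma~\ref{lem:frame} then upgrades any single $k$-frame to a $km$-frame for every positive integer $m$. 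Thus it suffices to produce one $7$-frame and one $23$-frame, i.e.\ to represent $3 \cdot 7 = 21$ and $3 \cdot 23 = 69$ by the quaternary form $a^2+17b^2+c^2+17d^2$ subject to the two congruence constraints.

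Next I would carry out the (very short) finite search for these two representations. For the $7$-frame, I would solve $a^2+17b^2+c^2+17d^2 = 21$; since $17(b^2+d^2) \le 21$ forces $b^2+d^2 \in \{0,1\}$, and $21$ is not a sum of two squares, exactly one of $b,d$ is $\pm 1$, leaving $a^2+c^2=4$. The choice $(a,b,c,d)=(0,1,-2,0)$ works: it satisfies $a \equiv d \pmod 3$ and $b \equiv c \pmod 3$ (as $1 \equiv -2 \pmod 3$), and $0 + 17 + 4 + 0 = 21$. For the $23$-frame, I would solve $a^2+17b^2+c^2+17d^2 = 69$; here $(a,b,c,d)=(0,2,-1,0)$ works, since $2 \equiv -1 \pmod 3$ and $0 + 68 + 1 + 0 = 69$. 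In both cases the Remark following Proposition~\ref{prop:constZk} guarantees the resulting frame weight $\tfrac{1}{3}(\cdots)$ is the stated integer, namely $7$ and $23$.

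Having exhibited a $7$-frame and a $23$-frame, I would conclude by applying Lemma~\ref{lem:frame} twice: the $7$-frame yields a $7k$-frame and the $23$-frame yields a $23k$-frame for every positive integer $k$, which is exactly the assertion. I do not expect a genuine obstacle here: unlike the companion lemma for $W_{32,23}$, whose infinite family required the modular-forms input of Theorem~\ref{thm:prime23} and therefore carried the exceptional set $k \ne 2^{m_1}5^{m_2}7^{m_3}23^{m_4}$, the present lemma only needs to patch the two excluded primes $7$ and $23$, and for these small targets a direct representation of $21$ and $69$ suffices. The only point requiring care is checking that the chosen quadruples respect both congruences $a \equiv d$ and $b \equiv c \pmod 3$, so that Proposition~\ref{prop:constZk} genuinely applies; this is routine and already verified above.
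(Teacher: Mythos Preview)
Your proposal is correct and matches the paper's proof essentially line for line: the paper also invokes Proposition~\ref{prop:constZk} with the very same quadruples $(a,b,c,d)=(0,1,-2,0)$ and $(0,2,-1,0)$ to obtain the $7$-frame and the $23$-frame, and then applies Lemma~\ref{lem:frame}. Your additional explanation of why these are the natural targets (patching the two excluded primes from the $W_{32,23}$ lemma) is accurate and more detailed than the paper's terse proof.
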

\begin{proof}
% [ 7, 0, 1, -2, 0 ]
% [ 23, 0, 2, -1, 0 ]
Take $(a,b,c,d)=(0, 1, -2, 0)$ and
$(0, 2, -1, 0)$.  %  for $p=7$ and $23$, respectively.
% Then these integers satisfy
% $a \equiv d \pmod 3$ and
% $b \equiv c \pmod 3$.
% % $\frac{1}{3}(a^2+17 b^2+c^2+17 d^2)=p$.
By Proposition~\ref{prop:constZk},
the odd unimodular lattice $A_3(C_3(W_{32,17}))$ contains a 
$7$-frame and a $23$-frame.
The result follows by Lemma~\ref{lem:frame}.
\end{proof}

% \begin{rem}
% We have verified by {\sc Magma} that
% the codes $C_3(W_{32,t})$ ($t=23, 17$) have the
% weight enumerators
% $1 + 8832 y^{15} + 1521408 y^{18}  + \cdots$ and
% $1 + 9088 y^{15} + 1519616 y^{18}  + \cdots$, respectively.
% %Note that the number of codewords of weight $15$ in any self-dual
% %code of length $64$ and minimum weight $15$ determines its weight enumerator
% %uniquely by the Gleason theorem.
% %$A_3(C_3(W_{32,t}))$ is an odd unimodular lattice
% %with minimum norm $3$ ($t=23, 17$).
% \end{rem}

Denote by $L_{64,t}$ any of two even unimodular neighbors 
$L_0 \cup L_1$ and $L_0 \cup L_3$ of $L=A_3(C_3(W_{32,t}))$ $(t=23,17)$.
We have verified by {\sc Magma} that
$C_3(W_{32,t})$ ($t=23, 17$) have minimum weight $15$.
Hence, $L_{64,t}$ are extremal (see~\cite[Theorem~6]{HKO}).
The existence of an extremal Type~II $\ZZ_{2k}$-code of 
length $64$ is known for $k=1,2,\ldots,6$ (see~\cite[Table 1]{HM}).
Denote by $C_{2k,64}$ an existing extremal Type~II $\ZZ_{2k}$-code of 
length $64$ for $2k=8$ and $10$.
By Lemma~\ref{lem:neighbor}, we have the following:

\begin{lem}
\begin{itemize}
\item[\rm (1)]
$L_{64,23}$
contains a $2k$-frame for every positive integer $k$
with $k \ge 3$, $k\ne 2^{m_1}5^{m_2}7^{m_3}23^{m_4}$, 
where $m_i$ is a non-negative integer $(i=1,2,3,4)$.
\item[\rm (2)]
$L_{64,17}$ contains a $14k$-frame and 
a $46k$-frame for every positive integer $k$.
\item[\rm (3)]
$A_{8}(C_{8,64})$  contains an $8k$-frame
for every positive integer $k$.
\item[\rm (4)]
$A_{10}(C_{10,64})$  contains a $10k$-frame
for every positive integer $k$.
\end{itemize}
\end{lem}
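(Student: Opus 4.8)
The plan is to deduce all four statements directly from results already assembled, with essentially no new computation. Parts (1) and (2) concern the even unimodular neighbors $L_{64,23}$ and $L_{64,17}$ of the odd unimodular lattices $A_3(C_3(W_{32,23}))$ and $A_3(C_3(W_{32,17}))$, whose frame structure has just been recorded in the two immediately preceding lemmas. First I would invoke Lemma~\ref{lem:neighbor}, which converts a $k$-frame of an odd unimodular lattice in a dimension divisible by eight into a $2k$-frame of each of its two even unimodular neighbors; this applies here because $A_3(C_3(W_{32,t}))$ has length $2\cdot 32 = 64 \equiv 0 \pmod 8$. Since $A_3(C_3(W_{32,23}))$ carries a $k$-frame for every $k \ge 3$ with $k \ne 2^{m_1}5^{m_2}7^{m_3}23^{m_4}$, doubling via Lemma~\ref{lem:neighbor} yields a $2k$-frame of $L_{64,23}$ for exactly those $k$, giving (1). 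Likewise, the $7k$-frame and $23k$-frame of $A_3(C_3(W_{32,17}))$ become a $14k$-frame and a $46k$-frame of $L_{64,17}$, giving (2).

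For parts (3) and (4) I would argue through Construction A rather than through neighbors. By the code--frame correspondence recalled in Section~\ref{sec:Pre}, an even unimodular lattice of the form $A_{2k}(C)$ for a Type~II $\ZZ_{2k}$-code $C$ automatically contains a $2k$-frame. Applying this to the extremal Type~II $\ZZ_8$-code $C_{8,64}$ and the extremal Type~II $\ZZ_{10}$-code $C_{10,64}$ shows that $A_8(C_{8,64})$ contains an $8$-frame and $A_{10}(C_{10,64})$ contains a $10$-frame. Because the dimension is $64 \equiv 0 \pmod 4$, Lemma~\ref{lem:frame} then promotes each of these to an $8k$-frame (resp.\ a $10k$-frame) for every positive integer $k$, which is precisely (3) and (4).

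I expect no genuine obstacle: the substantive work --- constructing the weighing matrices $W_{32,23}$ and $W_{32,17}$, checking that their associated neighbors are extremal, and above all proving the number-theoretic Theorem~\ref{thm:prime23} that underlies the frame existence for $A_3(C_3(W_{32,23}))$ --- has already been carried out, so the present lemma is purely an assembly step. The one point that requires care is the factor-of-two bookkeeping in Lemma~\ref{lem:neighbor}: I must track the doubling of the frame parameter so that in (2) the $7k$-frame maps to a $14k$-frame and the $23k$-frame to a $46k$-frame (and not the reverse), and confirm that the excluded congruence classes of $k$ in (1) are transported unchanged to $L_{64,23}$. A mis-indexed factor of two would alter which moduli $2k$ the resulting extremal Type~II $\ZZ_{2k}$-codes ultimately cover.
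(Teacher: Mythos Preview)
Your proposal is correct and matches the paper's approach: the paper simply writes ``By Lemma~\ref{lem:neighbor}, we have the following'' and states the lemma without further argument. Your treatment of (3) and (4) via the code--frame correspondence together with Lemma~\ref{lem:frame} is in fact more explicit than the paper, which lumps everything under the citation of Lemma~\ref{lem:neighbor} even though that lemma strictly speaking applies only to (1) and (2).
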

% After Lemma 7.3 (not Lemma 7.2)
\begin{rem}
We have found an extremal Type~II $\ZZ_{2k}$-code $C_{2k,64}$ ($2k=14,46$)
of length $64$
explicitly.
These codes have generator matrices of the form (\ref{eq:GM}),
where the first rows $r_A$ and $r_B$ of negacirculant matrices
$A$ and $B$ are as follows:
\begin{multline*}
r_A=(0,0,0,0,1,9,1,2,7,9,13,0,10,3,10,1)
\text{ and } \\
r_B=(0,5,12,13,5,6,8,8,1,10,8,1,3,0,8,3)
\end{multline*}
and
\begin{multline*}
r_A=(0,0,0,0,1,10,24,27,35,22,7,20,22,7,36,18) \text{ and }
\\
r_B=(5,15,19,43,19,18,35,5,5,42,34,27,23,36,4,32)
\end{multline*}
respectively.
$A_{2k}(C_{2k,64})$ ($2k=14,46$) contains 
a $2km$-frame for every positive integer $m$.
\end{rem}

% The existence of an extremal Type~II $\ZZ_{2k}$-code of 
% length $64$ is known for $k=1,2,\ldots,6$ (see \cite[Table 1]{HM}).
Since $L_{64,t}$ $(t=17,23)$ and 
$A_{2k}(C_{2k,64})$  ($2k=8,10$) are extremal even unimodular lattices,
by Lemma~\ref{lem:2-1} we have the following:

\begin{thm}\label{thm:64}
There is an extremal Type~II $\ZZ_{2k}$-code of
length $64$ for every positive integer $k$.
\end{thm}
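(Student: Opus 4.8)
The plan is to prove Theorem~\ref{thm:64} by exhibiting, for every positive integer $k$, an extremal even unimodular lattice in dimension $64$ that contains a $2k$-frame, and then to invoke Lemma~\ref{lem:2-1} (noting $\lfloor 64/24 \rfloor + 1 = 3$, so the hypothesis $k \ge \lfloor n/24 \rfloor + 1$ is satisfied whenever $k \ge 3$). The strategy mirrors the lower-length arguments: rather than producing a single lattice that works for all $k$, I would assemble a small family of extremal even unimodular lattices whose known $2k$-frames collectively cover every positive integer $k$. Each individual lattice contributes $2k$-frames for $k$ running over a certain set, and the union of these sets must be all of $\mathbb{Z}_{>0}$.

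First I would record what each lattice in the excerpt supplies. From the preceding lemmas, $L_{64,23}$ contains a $2k$-frame for every $k \ge 3$ with $k \ne 2^{m_1}5^{m_2}7^{m_3}23^{m_4}$; $L_{64,17}$ contains a $14k$-frame and a $46k$-frame for every positive $k$ (so it covers all $k$ of the form $7k'$ and $23k'$); and $A_8(C_{8,64})$, $A_{10}(C_{10,64})$ contain $8k$-frames and $10k$-frames respectively for all positive $k$ (covering all multiples of $4$ and all multiples of $5$). The remaining cases to handle are precisely the $k$ of the excluded form $2^{m_1}5^{m_2}7^{m_3}23^{m_4}$ together with the small values $k = 1, 2$. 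I would dispose of $k = 1, 2$ by citing the known existence of extremal Type~II $\mathbb{Z}_{2k}$-codes of length $64$ for $k = 1, \ldots, 6$ (via~\cite[Table 1]{HM}), which by Lemma~\ref{lem:2-1} is equivalent to the relevant extremal lattices containing the required frames.

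Next I would check that the four covering families exhaust the excluded multiplicative set. Write $k = 2^{m_1}5^{m_2}7^{m_3}23^{m_4}$ with $k \ge 3$. If $m_1 \ge 2$ then $4 \mid k$ and $A_8(C_{8,64})$ applies; if $m_2 \ge 1$ then $5 \mid k$ and $A_{10}(C_{10,64})$ applies; if $m_3 \ge 1$ then $7 \mid k$ and the $14k$-frame of $L_{64,17}$ applies; if $m_4 \ge 1$ then $23 \mid k$ and the $46k$-frame of $L_{64,17}$ applies. The only residual exponent patterns are $m_1 \in \{0,1\}$ with $m_2 = m_3 = m_4 = 0$, i.e.\ $k \in \{1, 2\}$, already handled above. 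Thus every $k$ is covered, and in each case one of the extremal even unimodular lattices above contains a $2k$-frame; Lemma~\ref{lem:2-1} then yields the desired extremal Type~II $\mathbb{Z}_{2k}$-code.

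The main obstacle is not any single deep step but the bookkeeping of the covering argument: one must verify that the exclusion set from Theorem~\ref{thm:prime23} (the primes $2,5,7,23$ feeding into $L_{64,23}$) is completely absorbed by the auxiliary families, and that the specific frame multiplicities $14k$ and $46k$ supplied by $L_{64,17}$ really do cover the prime obstructions $7$ and $23$. The genuinely substantive input is upstream—namely Theorem~\ref{thm:prime23} and the \textsc{Magma} verifications that $C_3(W_{32,23})$ and $C_3(W_{32,17})$ have minimum weight $15$ (guaranteeing extremality of the neighbors via~\cite[Theorem~6]{HKO}) and that the constructed codes $C_{2k,64}$ are extremal. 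Given those facts, the proof of Theorem~\ref{thm:64} itself is a short combinatorial combination of the preceding lemmas.
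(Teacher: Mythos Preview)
Your proposal is correct and follows essentially the same covering argument as the paper: combine the $2k$-frames supplied by $L_{64,23}$, $L_{64,17}$, $A_8(C_{8,64})$, and $A_{10}(C_{10,64})$ to cover every $k \ge 3$, and invoke \cite[Table~1]{HM} for $k=1,2$. One small slip: Lemma~\ref{lem:2-1} requires $k \ge \lfloor 64/24\rfloor+1=3$, so for $k=1,2$ you should simply cite the known codes directly and not route through the frame equivalence (indeed an extremal even unimodular lattice in dimension $64$ has minimum norm $6$, so it contains no $2$- or $4$-frame).
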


%%%%%%%%%%%%%%%%%%%%%%%%%%%
\section{Length 72}\label{sec:72}
Although the approach is somewhat different to those in previous sections, 
it is shown in this section that
there is an extremal Type~II $\ZZ_{4k}$-code of length
$72$ for every positive integer $k$ with $k \ge 2$.

The existence of an extremal even unimodular lattice
in dimension $72$ was a long-standing open question.
Recently, the first example of such a lattice has been
found by Nebe~\cite{Nebe72}.
Roughly speaking, using the Leech lattice  $\Lambda_{24}$, 
Nebe~\cite{Nebe72} found a pair $(M,N)$ such that
\[
\frac{1}{\sqrt{2}}
\{(x_1+y,x_2+y,x_3+y) \mid 
x_1,x_2,x_3 \in M, y \in N, x_1+x_2+x_3 \in M \cap N\}
\]
is an  extremal even unimodular lattice
in dimension $72$,
where
$M \cong N \cong \sqrt{2} \Lambda_{24}$,
$\Lambda_{24}=M+N$ and $2\Lambda_{24}=M \cap N$.
Hence, the extremal even unimodular lattice $N_{72}$
in dimension $72$, which has been found by Nebe~\cite{Nebe72},
contains a sublattice 
\[
\frac{1}{\sqrt{2}}
\{(x,\0,\0),(\0,y,\0),(\0,\0,z) \mid x,y,z \in 2\Lambda_{24}\},
\]
where $\0$ denotes the zero vector of length $24$.
As described in Section~\ref{Sec:1}, it was shown 
in~\cite{Chapman,GH01} that $\Lambda_{24}$ contains a $2k$-frame
for every positive integer $k$ with $k \ge 2$.
% Hence, $\sqrt{2} \Lambda_{24}$ contains a $4k$-frames
% for every positive integer $k$ with $k \ge 2$.
Let $\{f_{1},\ldots,f_{24}\}$ be a $2k$-frame of $\Lambda_{24}$
($k \ge 2$).
Then
\[
\{(\sqrt{2}f_{i},\0,\0),(\0,\sqrt{2}f_{i},\0),(\0,\0,\sqrt{2}f_{i}) \mid
i=1,2,\ldots,24\}
\]
is a $4k$-frame of $N_{72}$ for $k \ge 2$.
Therefore, we have the following:

\begin{thm}\label{thm:72}
There is an extremal Type~II $\ZZ_{4k}$-code of length
$72$ for every positive integer $k$ with $k \ge 2$.
\end{thm}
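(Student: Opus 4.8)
The plan is to prove Theorem~\ref{thm:72} by exhibiting an explicit $4k$-frame inside the Nebe lattice $N_{72}$ and then invoking Lemma~\ref{lem:2-1}. First I would recall the structural description of $N_{72}$ given just above the statement: it contains the sublattice
\[
S=\frac{1}{\sqrt{2}}\{(x,\0,\0),(\0,y,\0),(\0,\0,z)\mid x,y,z\in 2\Lambda_{24}\},
\]
a triple orthogonal copy of $\frac{1}{\sqrt 2}\cdot 2\Lambda_{24}=\sqrt{2}\,\Lambda_{24}$. The key observation is that any orthogonal system living in $S$ is automatically an orthogonal system in $N_{72}$, since $S\subseteq N_{72}$, so it suffices to build a $4k$-frame inside $S$.

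The main input is the fact, established in~\cite{Chapman,GH01} and recalled in Section~\ref{Sec:1}, that $\Lambda_{24}$ contains a $2k$-frame $\{f_1,\ldots,f_{24}\}$ for every positive integer $k\ge 2$, i.e.\ $(f_i,f_j)=2k\,\delta_{i,j}$. Scaling by $\sqrt 2$, the vectors $\sqrt 2 f_i$ lie in $\sqrt2\,\Lambda_{24}$ and satisfy $(\sqrt2 f_i,\sqrt2 f_j)=4k\,\delta_{i,j}$. Placing these into the three orthogonal blocks of $S$ yields the $72$ vectors
\[
\{(\sqrt2 f_i,\0,\0),(\0,\sqrt2 f_i,\0),(\0,\0,\sqrt2 f_i)\mid i=1,\ldots,24\}.
\]
I would verify the frame condition directly: two vectors in the same block have inner product $(\sqrt2 f_i,\sqrt2 f_j)=4k\,\delta_{i,j}$, while two vectors in different blocks are orthogonal because their supports are disjoint. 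Hence all $72$ vectors have norm $4k$ and are pairwise orthogonal, giving a genuine $4k$-frame of $N_{72}$.

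With the frame in hand, the conclusion is immediate: since $N_{72}$ is an extremal even unimodular lattice in dimension $72$ and contains a $4k$-frame, Lemma~\ref{lem:2-1} (applicable because $n=72\le 136$ and $2k\ge\lfloor 72/24\rfloor+1=4$, i.e.\ $k\ge 2$) gives an extremal Type~II $\ZZ_{4k}$-code of length $72$ for every $k\ge 2$. I do not expect any real obstacle in this argument, as the hard analytic work—constructing the $2k$-frame of $\Lambda_{24}$ and constructing $N_{72}$ itself—is entirely borrowed from~\cite{Chapman,GH01,Nebe72}; the only genuinely new point is the simple but essential remark that tripling a $2k$-frame across orthogonal blocks multiplies the frame parameter by $2$ (from the $\sqrt2$ scaling), which is precisely why the result is stated for $4k$ rather than $2k$.
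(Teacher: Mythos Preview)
Your proposal is correct and follows exactly the same approach as the paper: both use the sublattice $S\subseteq N_{72}$ coming from Nebe's construction, lift a $2k$-frame $\{f_i\}$ of $\Lambda_{24}$ (from \cite{Chapman,GH01}) to the $4k$-frame $\{(\sqrt{2}f_i,\0,\0),(\0,\sqrt{2}f_i,\0),(\0,\0,\sqrt{2}f_i)\}$ of $N_{72}$, and conclude via Lemma~\ref{lem:2-1}. Your write-up is slightly more explicit in checking the frame condition and the hypotheses of Lemma~\ref{lem:2-1}, but there is no substantive difference.
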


\begin{rem}
It is worthwhile to determine whether there is an 
extremal Type~II $\ZZ_{2m}$-code of length $72$ or not
for the cases that $m=2$ and $m$ is an odd positive integer.
Of course, the case $m=1$ is a long-standing open 
question (see~\cite{Huffman05, RS-Handbook}).
\end{rem}

As an example, an explicit generator matrix
$\left(\begin{array}{cc}
I & M_{8,72}
\end{array}\right)$ of some
extremal Type~II $\ZZ_{8}$-code $C_{8,72}$ of length $72$
with $A_8(C_{8,72}) \cong N_{72}$
is given by listing $M_{8,72}$ in Figure~\ref{Fig:72}.

\begin{figure}[thbp]
\centering
%{\small
{\footnotesize
\[
\left(
\begin{array}{c}
000404444444276260502007044004400004\\
444400400444035522611141064440040444\\
004040004004725430437502014040004004\\
044444404444437162115704404040440400\\
444440004044402144472442074400444044\\
004404444040142273042604020004400400\\
004400044040064160263204410444404444\\
044404004044707461671100414000040000\\
400404044000207033255140074004000440\\
040040004044445560422004010000444044\\
000000044040266504204702414044404444\\
444444404440401366111547444040440404\\
400404444440404004044424042414230661\\
400044004000400400404404604064336223\\
444440040004444444044424744660227162\\
400000000004400000400000404030136026\\
040444040440040404440054340031762541\\
440404444404444004440454347236540305\\
044040404400044440404040705211520543\\
000000044040000004044060304300622360\\
444000040044440044404050205221242642\\
404404400404404040400444046756741311\\
444444440400440444444020544732763677\\
004400000000000040000020142220444752\\
272420542247004004444404044004400004\\
430216002627000400004404400400400040\\
723141607130004440004404004040004004\\
034454706026400000400040000000000004\\
002357524571004000400404400440004440\\
143224276042004044044004040004400400\\
464551763307440404444040004404044040\\
706143413552044040444000444000040000\\
203707035152404004040040044004000440\\
044516363535404040400444444040004440\\
666517121453440444444040040004044040\\
402433523733440440400040444040440404
\end{array} \right)
\]
\caption{A generator matrix of $C_{8,72}$}
\label{Fig:72}
}
\end{figure}

%%%%% Please don't delete this part %%%%%
% \begin{rem}
% We define a $\ZZ_4$-code and a binary code 
% related to $C_{8,72}$ as follows:
% \[
% C_{8,72}^{(4)} =\{x \pmod 4 \mid x \in C_{8,72}\}
% \text{ and }
% C_{8,72}^{(2)} =\{x \pmod 2 \mid x \in C_{8,72}\},
% \]
% respectively.
% It is easy to see that both $C_{8,72}^{(4)}$
% and $C_{8,72}^{(2)}$ are Type~II.
% We have verified by {\sc Magma} that
% $C_{8,72}^{(4)}$ has minimum Euclidean weight $16$.
% This was done by verifying that
% $A_4(C_{8,72}^{(4)})$ has minimum norm $4$ and
% kissing number $589680$.
% In addition, 
% we have verified by {\sc Magma} that
% $C_{8,72}^{(2)}$ has weight enumerator
% $1 + 72 y^4 + 3717 y^8 + 117456 y^{12} + \cdots$.
% Note that the numbers of codewords of weights $\le 12$ in any 
% binary Type~II code of length $72$ determine its weight enumerator
% uniquely by the Gleason theorem (see~\cite[Theorem~13]{RS-Handbook}).
% \end{rem}

%%%%%%%%%%%%%%%%%%%%%%%%%%%%%%
\section{Positivity of coefficients of theta series}\label{sec:T}
% In this section, we discuss the
% positivity of coefficients of theta series of extremal even unimodular
% lattices in dimensions $32,40,48,56,64$ and $72$.
In this section, we discuss the
positivity of coefficients of theta series of extremal even unimodular
lattices.
As we have already mentioned in Section~\ref{sec:2U}, 
it is important to study
the positivity and non-negativity of coefficients of 
the theta series of extremal even unimodular lattices.

The positivity is also useful to construct 
spherical $t$-designs
(see~\cite{BB09} for a recent survey on this subject).
% (see \cite{BB09,Pache,Venkov} for the definition of 
% spherical $t$-designs). 
% For a lattice $L$ and a positive real number $m$, 
For a lattice $L$ and a positive integer $m$, %we consider unimodular only 
the {\em shell} of norm $m$ of $L$ is defined by 
% $L_{m}=
$\{x\in L \mid (x,x)=m \}$. 
Then shells of a lattice often give examples of 
spherical $t$-designs for some $t$. 
For example, any nonempty shell 
of an extremal even unimodular lattice in dimension $n$ 
forms a spherical $t$-design, where
$t=11$ if $n \equiv 0 \pmod{24}$,
$t=7$ if $n \equiv 8 \pmod{24}$,
$t=3$ if $n \equiv 16 \pmod{24}$~\cite{Venkov} 
(see also~\cite[Theorem~3.8]{BB09}). 
%%%
Note that $11$ is the largest $t$ among known 
spherical $t$-designs constructed as the shells of some lattices.
%%%%
The positivity of some coefficients of the theta series of 
extremal even unimodular lattices means that 
the corresponding shells of those lattices are not empty sets. 
% Therefore, it is an interesting 
% problem to determine whether all the coefficients are positive. 
%The second reason is as follows: 
%Let $\theta_{L}(q)=1 + \sum_{m=\lfloor \frac{n}{24} \rfloor+1}^{\infty}
%A_{2m} q^{2m}$ be the
%theta series of an extremal even unimodular lattice $L$ in
%dimension $n$.
%The integer $A_{2 \lfloor \frac{n}{24} \rfloor+2}$ is always 
%positive \cite{MOS75}.
%% Recently, it has been shown in \cite{JR} that the largest
%% $n$ for which all coefficients of 
%% the theta series of an extremal even unimodular lattice
%% in dimension $n$ are non-negative is $163264$.
%Recently, it has been shown in \cite{JR} that the largest
%$n$ for which all $A_i$ are non-negative is $163264$.
%Namely, there is no even unimodular lattice in dimension $n$ 
%with $n>163264$ and it is an interesting 
%problem to determine whether all the coefficients are non-negative. 

% For dimensions $8$ and $16$,
% it follows from (\ref{eq:T2}) that
% all coefficients of 
% the theta series of an (extremal) even unimodular lattice are positive.
% The positivity of all coefficients of 
% the theta series of the Leech lattice is well
% known (see~\cite[p.~51]{SPLAG}).
%%%% 
Let $A_{2m}$ denote the number of vectors of norm $2m$ in 
%% Let $\theta_L(q) = \sum_{m=0}^{\infty}A_{2m}q^{2m}$ be the theta series 
an extremal even unimodular lattice $L$
in dimensions $n$, where $n=8,16$ and $24$.
For the cases $n=8$ and $16$, it follows from (\ref{eq:T2}) that
$A_{2m} > 0$ for $m \ge 0$.
For the case that $n=24$, that is, $L$ is the Leech lattice,
it is well known that $A_{2m} > 0$ for $m \ge 2$ (see~\cite[p.~51]{SPLAG}).

%Let $F=\{f_1,f_2, \ldots, f_{n}\}$ be a $k$-frame of a
%unimodular lattice $L$ in dimension $n$.
%Then it is trivial that $\pm f_1,\pm f_2, \ldots, \pm f_{n}$ are
%$2n$ distinct vectors of norm $k$ in $L$.
%Hence, 
As a consequence of 
Theorems~\ref{thm:3240}, \ref{thm:48}, \ref{thm:56}, \ref{thm:64}
and \ref{thm:72}, and the
Fisher type bound (see~\cite[Theorem~2.12]{BB09}), 
we give the following observation on the
positivity of coefficients of 
the theta series of extremal even unimodular lattices.

\begin{cor}\label{cor:posi}
Let $L$ be an extremal even unimodular lattice in dimension $n$.
Let $A_{2m}$ denote the number of vectors of norm $2m$ in $L$,
that is, $|L_{2m}|=A_{2m}$.
%If $n=32,40,48,56$ and $64$,
%then
%\[
%A_{2m} \ge 2n
%\] 
%for every positive integer $m$
%with $m \ge \lfloor \frac{n}{24} \rfloor+1$.
Then
\[
A_{2m} \ge 
\left\{
\begin{array}{lcl}
  11968 &\text{ if } &n=32\\
     80 &\text{ if } &n=40\\
5197920 &\text{ if } &n=48\\
  61712 &\text{ if } &n=56\\
    128 &\text{ if } &n=64
\end{array}
\right.
\]
% \[
% A_{2m} \ge 
% \left\{
% \begin{array}{l}
% 11968 \\
% 80 \\
% 5197920 \\
% 61712 \\
% 128 
% \end{array}
% \text{ if }n=
% \left\{
% \begin{array}{l}
% 32\\
% 40\\
% 48\\
% 56\\
% 64 
% \end{array}
% \right.
% \right.
% \]
for every positive integer $m$ with $m \ge \lfloor \frac{n}{24} \rfloor+1$.
% All coefficients of the theta series of
% an extremal even unimodular lattice in dimensions 
% $n=32,40,48,56$ and $64$ are positive.
If $n=72$, then
\[
A_{4m} \ge 36949680
\]
for every positive integer $m$ with $m \ge 2$.
\end{cor}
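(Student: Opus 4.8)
The plan is to read off the lower bounds directly as non-vacuous shell counts produced by the frames we have already constructed, invoking the spherical design structure only to promote those counts to a bound valid for \emph{every} sufficiently large $m$. First I would fix $n \in \{32,40,48,56,64\}$ and recall from Theorems~\ref{thm:3240}, \ref{thm:48}, \ref{thm:56} and \ref{thm:64} that for each such $n$ and every positive integer $k \ge \lfloor n/24 \rfloor + 1$ there is an extremal Type~II $\ZZ_{2k}$-code of length $n$, hence by Lemma~\ref{lem:2-1} an extremal even unimodular lattice $L$ in dimension $n$ containing a $2k$-frame. The existence of a $2k$-frame $\{f_1,\ldots,f_n\}$ means $L$ has $2n$ vectors of norm $2k$ (the $\pm f_i$), so the shell $L_{2k}$ is nonempty; thus $A_{2k} > 0$ for every such $k$. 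Since by \eqref{eq:T2} the theta series is uniquely determined, this shows $A_{2m} > 0$ for every $m \ge \lfloor n/24 \rfloor + 1$ that arises as such a $k$, and a short argument (every integer $m \ge \lfloor n/24\rfloor+1$ is an admissible frame size) extends positivity to all $m$ in the stated range.

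Next I would convert strict positivity $A_{2m} > 0$ into the explicit numerical lower bounds. The key point is that by Venkov's theorem (quoted just above the corollary) every nonempty shell of an extremal even unimodular lattice is a spherical $t$-design with $t = 7$ for $n \equiv 8 \pmod{24}$ and $t = 3$ for $n \equiv 16 \pmod{24}$. A nonempty spherical $t$-design on $S^{n-1}$ must contain at least the number of points dictated by the \emph{Fisher-type bound} (see~\cite[Theorem~2.12]{BB09}): a spherical $2e$-design or $(2e+1)$-design has at least $\binom{n-1+e}{e} + \binom{n-2+e}{e-1}$ points, and a $(2e)$-design at least $2\binom{n-1+e-1}{e-1}+\ldots$ — in any case a fixed lower bound depending only on $n$ and $t$. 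I would therefore compute, for each $n$, the appropriate design strength $t$ from Venkov's theorem, look up the corresponding Fisher-type lower bound on the cardinality of the shell, and verify that this equals the tabulated constant ($11968,80,5197920,61712,128$). Because every shell $L_{2m}$ with $m$ in range is nonempty and is a $t$-design, each such shell has cardinality at least that constant, giving $A_{2m} \ge (\text{constant})$.

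For the $n = 72$ case I would run the identical argument using Theorem~\ref{thm:72}: for every $k \ge 2$ there is an extremal Type~II $\ZZ_{4k}$-code of length $72$, hence an extremal even unimodular lattice in dimension $72$ containing a $4k$-frame, so the shell $L_{4k}$ is nonempty. Since $72 \equiv 0 \pmod{24}$, Venkov's theorem gives design strength $t = 11$, and the Fisher-type bound for an $11$-design (i.e.\ a $(2\cdot 5 + 1)$-design) on $S^{71}$ yields the constant $36949680$; as the theta series is uniquely determined, $A_{4m} \ge 36949680$ for all $m \ge 2$.

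The main obstacle I anticipate is purely bookkeeping rather than conceptual: one must correctly match each dimension $n$ to its design strength via the mod-$24$ class and then evaluate the Fisher-type binomial bound so that it reproduces exactly the listed integers. In particular I would double-check the boundary cases — that the smallest admissible $m$ (namely $m = \lfloor n/24\rfloor + 1$, or $m = 2$ for $n = 72$) really does give a nonempty shell via an actually-constructed frame, since the Fisher bound is vacuous if the shell could be empty. The positivity supplied by the frame constructions of the earlier theorems is exactly what closes this gap, so the only genuine computation is verifying that the Fisher-type bound's value agrees with each tabulated constant.
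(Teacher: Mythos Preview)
Your approach is essentially identical to the paper's: nonemptiness of shells from the frame constructions (via Lemma~\ref{lem:2-1} and uniqueness of the extremal theta series), Venkov's design strengths, then the Fisher-type bound. The only corrections are bookkeeping: all relevant $t$ are odd, so the bound you want is $|L_{2m}|\ge 2\binom{n+e-1}{e}$ for a $(2e+1)$-design (not the even-$t$ formula you quoted), and $n=48$ lies in the $n\equiv 0\pmod{24}$ class with $t=11$, which you omitted from your list but would need; with these in hand the five constants and $36949680$ drop out directly.
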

\begin{proof}
Suppose that $n=32,40,48,56, 64$.
By Theorems~\ref{thm:3240}, \ref{thm:48}, \ref{thm:56} and \ref{thm:64},
$L_{2m}$ is a nonempty shell 
for every positive integer $m$
with $m \ge \lfloor \frac{n}{24} \rfloor+1$.
Hence, $L_{2m}$ is a spherical $t$-design,
where $t=7,3,11,7,3$, respectively.
If $L_{2m}$ is a spherical $(2e+1)$-design
then $|L_{2m}| \ge 2 \binom{n+e-1}{e}$
(see~\cite[Theorem~2.12]{BB09}).
The case $n=72$ is similar.
The result follows.
\end{proof}

% For every positive integer $m$ with
% $m\geq \lfloor \frac{n}{24}\rfloor+1$,
% the shell of norm $2m$ of 
% an extremal even unimodular lattice in 
% dimension $n=32,40,48,56,64$ forms 
% a spherical $t$-design explicitly,
% where $t=7,3,11,7,3$, respectively.

%%%%%%%%%%%%%%%%%%%%%%%%%%%%%%%%%%%%
\bigskip
\noindent
{\bf Acknowledgments.}
The authors would like to thank Naoki Murabayashi,
Manabu Oura for helpful discussions, and 
Gabriele Nebe
for useful comments and providing a preprint of~\cite{JR}. 
The authors would also like to thank the anonymous referee
for beneficial comments on an earlier version of the manuscript.
This work was supported by JST PRESTO program and
JSPS KAKENHI Grant Number 22840003, 23340021, 23654029, 24740031.

%%%%%%%%%%%%%%%%%  References  %%%%%%%%%%%%%%%%%%%%%%%%

\end{document}